\documentclass[11pt]{amsart}
\usepackage{amssymb}
\usepackage{amsmath}
\usepackage{mathrsfs}
\usepackage{amsfonts}
\usepackage{color}
\usepackage{amsthm}
\usepackage{graphicx}
\usepackage[colorlinks=true, pdfstartview=FitV, linkcolor=blue,citecolor=blue, urlcolor=blue,dvipdfmx]{hyperref}
\usepackage{verbatim}

\usepackage[utf8]{inputenc}
\usepackage[T1]{fontenc}
\numberwithin{equation}{section}


\theoremstyle{definition}
\newtheorem{defi}{Definition}[section]
\newtheorem{rem}[defi]{Remark}
\theoremstyle{plain}
\newtheorem{theorem}{Theorem}[section]

\newtheorem{lem}[defi]{Lemma}
\newtheorem{cor}[defi]{Corollary}
\newtheorem{prop}[defi]{Proposition}

\newenvironment{pr}[1]
   {{\noindent \bf Proof of {#1}.\  }}{\hfill \qed}

\numberwithin{equation}{section}

\newcommand{\nn}{\nonumber}
\newcommand{\io}{\int_0^1}

\newcommand{\dx}{\partial_x}

\newcommand{\eqntag}{\addtocounter{equation}{1}\tag{\theequation}}
\allowdisplaybreaks

\renewcommand{\r}{\mathbb{R}}

\author{Tomasz Cie\'{s}lak}
\address{Institute of Mathematics \newline Polish Academy of Sciences \newline \'Sniadeckich 8, 00-656 Warszawa, Poland}
\email{cieslak@impan.pl}

\author{Kentaro Fujie}
\address{Mathematical Institute, Tohoku University, Sendai 980-8578, Japan}
\email{fujie@tohoku.ac.jp}

\author{Tatsuya Hosono}
\address{Osaka Central Advanced Mathematical Institute, Osaka Metropolitan University,
Osaka 558-8585, Japan}
\email{tatsuya.hosono@omu.ac.jp}

\title[Nonlinear Fisher information]{Nonlinear Fisher information, corresponding functional inequalities and applications}

\begin{document}
\begin{abstract}
We study the evolution of the nonlinear version of the Fisher information along the quasilinear heat equation. We also provide a nonlinear version of a recent functional inequality (\cite{Ci-Ma-Ka-Mi}), corresponding to the nonlinear heat equation. Next, applications of our version of nonlinear Fisher information to the 1D critical quasilinear fully parabolic Keller--Segel system are given. In particular, the global existence of solutions to the critical nonlinear diffusion/nonlinear sensitivity 1D fully parabolic Keller--Segel system is obtained for certain type of diffusion. Last, but not least, we also study the version of the Fisher information along the $p$-Laplace equation.
\end{abstract}

\maketitle

\vspace{5mm}
\noindent
\textbf{\footnotesize Keywords:}
{\footnotesize Fisher Information; Entropy; Functional Inequality; Nonlinear Diffusion; Chemotaxis}

\vspace{5mm}

\noindent
\textbf{\footnotesize 2020 Mathematics Subject Classification:}
{\footnotesize Primary: 35K59,
Secondary: 35B45, 35K92, 35Q92
}

\section{Introduction}
Diffusion phenomena are often governed by nonlinear partial differential equations, where entropy and Fisher information
play an important role, see~\cite{Ju}. The study of entropy and Fisher information is  important in sciences, like physics (\cite{Ev,Ju,Li-Ya24}) or biology (\cite{Ju}), as well as information theory~(\cite{Sa-To,Sh}).  In mathematics, they provide powerful analytical tools for proving large time behavior, regularity or asymptotic behavior of solutions~(\cite{Ca-Ju-Ma-To-Un,CaTo}).

The role of entropy and its monotonicity has been widely explored in both linear and nonlinear settings. The corresponding behavior of Fisher information, particularly its time monotonicity, is also an important tool in the studies of diffusion phenomena, see~\cite{Bakry,Ca-Ju-Ma-To-Un} or due to its applications to information theory, see \cite{Sa-To}. In particular, the monotonicity of nonlinear version of Fisher information in the case of porous medium equations, posed in a whole space has been given in \cite{Sa-To}.

In the present paper we shall study a slightly different approach to the nonlinear Fisher information. However, in the beginning we shall extend the classical approach by Savar\'e--Toscani to the nonlinear diffusions in bounded domains with zero Neumann boundary conditions.

Next, we investigate an alternative formulation of the Fisher information to quite general nonlinear diffusion equations under homogeneous Neumann boundary conditions, focusing on the evolution of Fisher information.
We show that, under suitable structural assumptions, the Fisher information exhibits time-monotonicity. Next, we establish related novel functional inequalities that allow to use the dissipative term in certain types of
systems of PDEs. Finally, we shall apply our approach to obtain in a shorter and more direct way a functional that was a main tool that led to the proof of the global existence result in the quasilinear critical 1D nonlinear diffusion Keller--Segel system, see \cite{Ci-Fu18}, see also \cite{BCFS, CF1, Fu}.  Moreover, we shall also use our method to give a completely new result concerning the global existence in the critical 1D Keller--Segel system with nonlinear diffusion and nonlinear sensitivity. Last, but not least, a new Fisher information type quantity exhibiting the time-monotonicity in the $p$-Laplace equation in 1D will be given.

Let $\Omega\subset \r^n$ be a bounded domain with a smooth boundary $\partial\Omega$.
We study the initial boundary value problem of the following nonlinear diffusion equations under homogeneous Neumann boundary condition:
\begin{equation}
\left\{
\begin{aligned}
&u_t  = \nabla\cdot(a(u)\nabla u),
& t>0,\, &x\in\Omega,
\\
&\frac{\partial u}{\partial \nu}=0,
&t>0,\, &x\in\partial\Omega,
\\
&u(0,x)=u_{0}(x),
&\, &x\in\Omega,
\end{aligned}
\right.
\label{eqn;NDE}
\end{equation}
where the initial data being such that $u_0\ge 0$ in $\Omega$,
$\nu$ denotes the outward unit normal vector on $\partial\Omega$,
and
the positive function $a$ on $[0,\infty)$ satisfies
\begin{align*}
a(s)\ge0,
\quad
a\in C^1((0,\infty)).
\label{eqn;a}
\end{align*}
Typical choice of the function $a(s)$ is represented as
\begin{align*}
a(s) = m(1+s)^{m-1}
\end{align*}
with $m\in\mathbb{R}$. Typical candidates of entropies in the particular case $a(s)=m s^{m-1}$
are given by
\begin{align*}
&H_m(u):=\frac{1}{m-1}\int_{\Omega} u^m\,dx,\quad m>0,~~m\neq 1,
\\
&H_1(u):=\int_{\Omega}u(\log u -1)\, dx,
\\
&H_0(u):=\int_{\Omega}(u-\log u)\,dx.
\end{align*}
The entropy $H_1(u)$ is known as the (Boltzmann--)Shannon entropy \cite{Sh} for the linear heat, and satisfies
\begin{align*}
\frac{d}{d t} H_1(u(t)) = -\int_{\Omega} \frac{|\nabla u |^2}{u}\,dx\le\,0,
\end{align*}
where the negative of the right hand side is called the Fisher information (entropy production) \cite{Fi}, first used by Edgeworth \cite{Ed}, that plays an important role in information theory (or statistics).

Such a classical Fisher information has recently found applications in the 1D thermoelasticity on the one hand, see \cite{Bi-Ci23, Bi-Ci25}, where global-in-time existence and uniqueness of solutions (see \cite{Bi-Ci23}) as well as the full asymptotic classification of solutions (\cite{Bi-Ci25}) were obtained using the functional built on the Fisher information.
In the higher dimensional thermoelasticity such methods were used in \cite{Bi-Ci-La-Mu-Tr}.
Next, application of the above method was successful also in the combustion theory, see \cite{Li-Ya24}.
In the above mentioned articles the functional inequality introduced in \cite{Ci-Ma-Ka-Mi} was used.
The inequality states that there exists a constant
$C>0$ such that for all functions
\[
f\in {\bf F}:=
\left\{
f:\Omega\rightarrow \r_+,~
\begin{split}
&\mbox{of $C^2$ regularity, satisfying}
	\\& \mbox{zero Neumann data at the boundary}
\end{split}
\right\},
\]
\begin{equation}\label{wstepne}
\int_\Omega |D^2 \sqrt{f}|^2dx\leq C \int_\Omega f|D^2 \log f|^2dx.
\end{equation}
The applicability of the above inequality stems from the fact that along the heat flow, the Fisher information satisfies
\begin{equation}\label{wstepne_bis}
\frac{d}{dt}\int_\Omega \frac{|\nabla u|^2}{u}dx=-\int_\Omega u|D^2 \log u|^2dx,
\end{equation}
so that the dissipation term of the Fisher information is given by the negative of the right-hand side of \eqref{wstepne}.

In the case of a porous medium equation, at least in the whole space the similar result is known, see \cite{Sa-To} and Section \ref{pierwsza}.
In the present paper we first extend the Savar\'e--Toscani result to other type of nonlinear diffusions and bounded domains with homogeneous Neumann data.
Next,
this is the main contribution of the paper, we present an alternative approach to the problem.
It gives the monotonicity of the nonlinear Fisher information for a very general class of nonlinear diffusions in 1D, some results in higher dimensions are also available.
For the details, we refer the reader to Section \ref{druga}.
Next, we introduce the class of nonlinear inequalities corresponding to the nonlinear Fisher information dissipation term, similar in its spirit to \eqref{wstepne}.
Moreover, the nonlinear versions of the Bernis type inequalities, different from those obtained in \cite{Wi12}, are also presented.
The latter two classes of inequalities are found in Section \ref{trzecia}.
In Section \ref{czwarta} we give two applications of our method.
First, in Section \ref{4.5} a functional known from \cite{Ci-Fu18} is derived in a more direct way.
This proof is based on our new approach introduced in Section \ref{druga}. The functional plays a crucial role in proving the existence of global solutions in the critical 1D nonlinear diffusion Keller--Segel system.
Next, a particular case of nonlinear diffusion and nonlinear sensitivity 1D Keller--Segel system is considered in Section \ref{4.66}.
We show that independently on the magnitude of initial data, the solution exists globally in time.
Again, we use a variant of our method to obtain the crucial estimate. We remark that the result is completely new and the well-known method based on Lyapunov functional to the Keller--Segel system is useless in our setting, see Remark \ref{poor}.
Similarly to the nonlinear diffusion case, it shows that 1D Keller--Segel system (for both nonlinear diffusion and sensitivity) does not allow a critical mass phenomenon,
unlike in higher dimensions. Last, but not least, in Section~\ref{piata}, a variant of the nonlinear Fisher information, corresponding to $p$-Laplace equation is introduced.
We show that it is monotone along the flow in space dimension 1. The latter result also seems new.

\section{Savar\'e--Toscani calculation in a bounded domain}\label{pierwsza}
In this section we extend the calculation in \cite{Sa-To} to the case of a bounded domain and homogeneous Neumann data.
Our result covers also quite a wide range of nonlinearities.

In the following theorem, the distinction between the Laplacian and the Hessian becomes critical when one attempts to establish monotonicity
properties of functionals along the evolution governed by nonlinear diffusion equations \eqref{eqn;NDE} in higher dimensions.

This section should be viewed as small step.
It is only a slight extension of the calculations in \cite{Sa-To} to the bounded domain case.
\begin{theorem}\label{thm;FI}
For $n\ge1$, let $\Omega\subset\r^n$ be a smooth and  bounded domain.
Let $u $ be a positive classical solution to \eqref{eqn;NDE} in $ C^3(\bar{\Omega})$.
Define primitive functionals $\Lambda(s)$, $H(s)$ and $F(s)$ on $[0,\infty)$ by
\begin{align*}
	\Lambda(s):=\int_1^s \frac{a(\tau)}{\tau}d\tau,
	\quad
	H(s):=\int_1^s \Lambda(\tau)d\tau,
	\quad
	F(s):=\int_{0}^s a(\tau)d\tau.
	\eqntag\label{eqn;premitive-higher}
	\end{align*}
	Then the following holds true:
	\begin{align*}
	\frac{d}{dt} \int_{\Omega} H(u)  dx =\,& -\int_{\Omega} u |\nabla \Lambda(u)|^2dx,
	\end{align*}
	and
	\begin{align*}
	&\frac{1}{2}\frac{d}{dt}\int_{\Omega} u |\nabla \Lambda(u)|^2 dx
	\\= \,&
	-\int_{\Omega} F(u) |D^2 \Lambda(u)|^2dx-\int_{\Omega}(ua(u) - F(u)) |\Delta \Lambda(u)|^2dx
	\\&+\frac12\int_{\partial\Omega}F(u)\nabla |\nabla \Lambda(u)|^2\cdot \nu dS.
	\end{align*}
If $\Omega$ is convex then the boundary term is non-positive, so that
\begin{align*}
&\frac{1}{2}\frac{d}{dt}\int_{\Omega} u |\nabla \Lambda(u)|^2 dx
\\\le \,&-\int_{\Omega} F(u) |D^2 \Lambda(u)|^2dx-\int_{\Omega}(ua(u) - F(u)) |\Delta \Lambda(u)|^2dx.
\end{align*}
\end{theorem}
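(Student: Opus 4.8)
The plan is to compute the two time derivatives directly, using the equation \eqref{eqn;NDE} and the chain-rule identities for the primitives $\Lambda$, $H$, $F$ defined in \eqref{eqn;premitive-higher}. The key structural observation is that $F'(s)=a(s)$, $\Lambda'(s)=a(s)/s$, and $H'(s)=\Lambda(s)$, so that $\nabla F(u)=a(u)\nabla u$, $\nabla\Lambda(u)=\tfrac{a(u)}{u}\nabla u$, and hence $u\nabla\Lambda(u)=a(u)\nabla u$, i.e. the diffusion flux in \eqref{eqn;NDE} is exactly $u\nabla\Lambda(u)$, and the equation reads $u_t=\nabla\cdot(u\nabla\Lambda(u))$.

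First I would treat the entropy identity. Differentiating $\int_\Omega H(u)\,dx$ gives $\int_\Omega \Lambda(u)u_t\,dx=\int_\Omega \Lambda(u)\,\nabla\cdot(u\nabla\Lambda(u))\,dx$; integrating by parts and using the Neumann condition (which makes $\nabla\Lambda(u)\cdot\nu=0$ on $\partial\Omega$) yields $-\int_\Omega u|\nabla\Lambda(u)|^2\,dx$, as claimed.

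For the second identity, set $w:=\Lambda(u)$, so $u\nabla w=a(u)\nabla u$ and $u_t=\nabla\cdot(u\nabla w)$. Then
$\tfrac12\tfrac{d}{dt}\int_\Omega u|\nabla w|^2\,dx
=\tfrac12\int_\Omega u_t|\nabla w|^2\,dx+\int_\Omega u\nabla w\cdot\nabla w_t\,dx$.
In the first term substitute $u_t=\nabla\cdot(u\nabla w)$ and integrate by parts (no boundary term since $\nabla w\cdot\nu=0$ on $\partial\Omega$), getting $-\tfrac12\int_\Omega u\nabla w\cdot\nabla|\nabla w|^2\,dx$. In the second term use $w_t=\Lambda'(u)u_t=\tfrac{a(u)}{u}\,\nabla\cdot(u\nabla w)$ and integrate by parts to move the derivative off $w_t$; the boundary contribution must be tracked carefully, and one should also rewrite $u\nabla w\cdot\nabla w_t$ after noticing $u\,\tfrac{a(u)}{u}=a(u)=F'(u)$ so that $\nabla F(u)=a(u)\nabla u=u\nabla w$ reappears. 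Combining the two integrated-by-parts expressions and using the pointwise Bochner-type identity $\nabla\cdot(u\nabla w)\,\Delta w$ versus $\nabla w\cdot\nabla\bigl(\nabla\cdot(u\nabla w)\bigr)$ — equivalently the standard formula $\tfrac12\Delta|\nabla w|^2=|D^2w|^2+\nabla w\cdot\nabla\Delta w$ — produces the split into $-\int_\Omega F(u)|D^2 w|^2\,dx$, the Laplacian remainder $-\int_\Omega(ua(u)-F(u))|\Delta w|^2\,dx$, and the boundary term $\tfrac12\int_{\partial\Omega}F(u)\,\nabla|\nabla w|^2\cdot\nu\,dS$. I expect the main obstacle to be the bookkeeping of which terms carry the coefficient $F(u)$ and which carry $ua(u)$: the coefficient $ua(u)-F(u)$ in front of $|\Delta w|^2$ arises precisely because integrating by parts the $w_t$ term generates $\nabla\cdot(u\nabla w)$ paired against a Laplacian, whereas the pure Hessian term only sees $F(u)=\int_0^u a$; getting these two to separate cleanly is the delicate computation.

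Finally, for the convex case I would invoke the classical fact that on a convex domain, for a function $w$ with $\nabla w\cdot\nu=0$ on $\partial\Omega$ one has $\partial_\nu|\nabla w|^2=2\,\mathrm{II}(\nabla w,\nabla w)\le 0$, where $\mathrm{II}$ is the (nonnegative) second fundamental form; since $F(u)\ge 0$ because $a\ge 0$, the boundary term $\tfrac12\int_{\partial\Omega}F(u)\,\partial_\nu|\nabla w|^2\,dS$ is $\le 0$, and dropping it yields the stated inequality.
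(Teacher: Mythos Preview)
Your proposal is correct and follows essentially the same route as the paper: rewrite the flux as $u\nabla\Lambda(u)=\nabla F(u)$, split $\tfrac12\tfrac{d}{dt}\int u|\nabla\Lambda(u)|^2$ into the $u_t$-term and the $\Lambda(u)_t$-term, integrate each by parts tracking the one surviving boundary integral, and then use Bochner's identity $\tfrac12\Delta|\nabla w|^2=|D^2w|^2+\nabla w\cdot\nabla\Delta w$ to collapse everything into the $F(u)|D^2\Lambda(u)|^2$ and $(ua(u)-F(u))|\Delta\Lambda(u)|^2$ terms. The convexity step is also the same (the paper simply quotes the $\partial_\nu|\nabla w|^2\le 0$ lemma rather than the second-fundamental-form formulation; watch the sign convention there, since with the usual outward-normal convention the identity is $\partial_\nu|\nabla w|^2=-2\,\mathrm{II}(\nabla w,\nabla w)$).
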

In order to complete the proof of Theorem \ref{thm;FI}, we
recall the following two useful lemmata.
\begin{lem}[Bochner's formula]\label{lem;Bochner}
Let $\Omega$ be a smooth domain of $\r^n$ and $f \in C^3(\bar{\Omega})$.
Then,
\begin{align*}
\nabla f \cdot \nabla(\Delta f)=- |D^2 f|^2+\frac{1}{2} \Delta |\nabla f|^2
\quad
\text{in}\,\,\bar{\Omega}.
\end{align*}
\end{lem}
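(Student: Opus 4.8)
The plan is to prove Bochner's formula as a purely pointwise algebraic identity, verified at each point of $\Omega$ by a direct computation in Cartesian coordinates and then extended to the closure $\bar{\Omega}$ by continuity of all the derivatives involved (which is guaranteed by $f\in C^3(\bar\Omega)$). I would work with the summation convention over repeated indices $i,j,k\in\{1,\dots,n\}$ and write $\partial_i:=\partial/\partial x_i$, so that $\Delta f=\partial_{jj}f$, $|\nabla f|^2=\partial_i f\,\partial_i f$, and $|D^2 f|^2=\partial_{ij}f\,\partial_{ij}f$. The whole content of the lemma is then obtained by expanding the single term $\tfrac12\Delta|\nabla f|^2$ via the product rule.

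Concretely, first I would differentiate $|\nabla f|^2$ once, obtaining
\begin{align*}
\partial_k |\nabla f|^2 = 2\,\partial_{ki}f\,\partial_i f,
\end{align*}
and then differentiate again and contract in $k$ (applying the product rule to each factor), which yields
\begin{align*}
\Delta |\nabla f|^2 = \partial_{kk}\!\left(\partial_i f\,\partial_i f\right) = 2\,\partial_{kki}f\,\partial_i f + 2\,\partial_{ki}f\,\partial_{ki}f.
\end{align*}
The second term on the right is exactly $2|D^2 f|^2$. For the first term I would commute the order of the three derivatives, writing $\partial_{kki}f=\partial_i(\partial_{kk}f)=\partial_i(\Delta f)$, so that
\begin{align*}
\partial_{kki}f\,\partial_i f = \partial_i f\,\partial_i(\Delta f)=\nabla f\cdot\nabla(\Delta f).
\end{align*}
Substituting these two observations gives $\tfrac12\Delta|\nabla f|^2 = \nabla f\cdot\nabla(\Delta f)+|D^2 f|^2$, which rearranges at once into the asserted identity.

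The only step requiring care — and the sole reason the hypothesis is stated as $f\in C^3$ rather than merely $C^2$ — is the interchange of the order of differentiation used to rewrite $\partial_{kki}f$ as $\partial_i(\Delta f)$. This is legitimate because $C^3$ regularity makes all third-order mixed partial derivatives continuous, so Schwarz's (Clairaut's) theorem guarantees their symmetry. I expect this to be the main (and essentially the only) point to flag, since everything else is routine product-rule bookkeeping. I would emphasize that no geometric hypothesis on $\Omega$ and no use of the boundary condition enters here: the formula is a local identity valid at every point, and continuity of the left- and right-hand sides up to $\partial\Omega$ propagates it to all of $\bar\Omega$.
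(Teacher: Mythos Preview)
Your proof is correct: the direct Cartesian expansion of $\tfrac12\Delta|\nabla f|^2$ via the product rule, together with Schwarz's theorem to commute third derivatives, is exactly the standard elementary argument, and your remarks on why $C^3$ is needed and why no boundary hypothesis enters are on point. The paper does not supply its own proof of this lemma---it merely \emph{recalls} Bochner's formula as a known identity to be used later---so there is nothing to compare against; your write-up would serve as a complete justification where the paper leaves a gap.
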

\begin{lem}\label{lem;Evans}
Let $\Omega$ be a convex bounded domain of $\r^n$ with smooth boundary~$\partial\Omega$.
Suppose that a function $f \in C^2(\bar{\Omega})$ satisfies the $0$-Neumann boundary condition. Then,
\begin{align*}
\left.\frac{\partial |\nabla f|^2}{\partial \nu}\right|_{\partial\Omega}\leq 0.
\end{align*}
\end{lem}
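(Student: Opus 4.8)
The plan is to evaluate the normal derivative pointwise on $\partial\Omega$ and, using the Neumann condition, to rewrite it as minus the second fundamental form of $\partial\Omega$ applied to the vector $\nabla f$; convexity of $\Omega$ then supplies the correct sign. I would fix an arbitrary point $x_0\in\partial\Omega$ and work there, noting at the outset that if $\nabla f(x_0)=0$ the claimed inequality is trivial, so I may assume $\nabla f(x_0)\neq 0$.

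First I would compute the normal derivative directly. From $\partial_k|\nabla f|^2=2\sum_i f_i f_{ik}$ and $\partial_\nu=\sum_k\nu_k\partial_k$, together with the symmetry of the Hessian, one gets
\[
\frac{\partial|\nabla f|^2}{\partial\nu}
= 2\sum_{i,k} f_i f_{ik}\nu_k
= 2\,D^2 f(\nabla f,\nu),
\]
where $D^2 f(X,Y):=\sum_{i,j} f_{ij}X_iY_j$ is the Hessian bilinear form. The key structural observation is that the zero-Neumann condition $\nabla f\cdot\nu=0$ holds along the \emph{entire} boundary, so $\nabla f$ is everywhere tangent to $\partial\Omega$; in particular it is itself a legitimate tangential direction.

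Next I would differentiate the boundary identity $\nabla f\cdot\nu=0$ in a tangential direction. Taking the derivative along a curve $\gamma\subset\partial\Omega$ with $\gamma(0)=x_0$ and $\gamma'(0)=\tau$ tangent gives
\[
0 = D^2 f(\tau,\nu) + \nabla f\cdot(\partial_\tau\nu),
\]
so that $D^2 f(\tau,\nu) = -\,\nabla f\cdot(\partial_\tau\nu)$. Here $\partial_\tau\nu$ is the derivative of the unit normal along $\partial\Omega$, i.e. the Weingarten (shape) operator applied to $\tau$, an intrinsic object on the boundary that needs no extension of $\nu$ into $\Omega$. Choosing $\tau=\nabla f$ (admissible by the tangency noted above) and substituting into the formula for the normal derivative yields
\[
\frac12\,\frac{\partial|\nabla f|^2}{\partial\nu}
= D^2 f(\nabla f,\nu)
= -\,\nabla f\cdot(\partial_{\nabla f}\nu)
= -\,\mathrm{II}(\nabla f,\nabla f),
\]
where $\mathrm{II}$ denotes the second fundamental form of $\partial\Omega$ with respect to the outward unit normal.

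Finally I would invoke convexity: for a convex domain the second fundamental form of $\partial\Omega$ with respect to the outer normal is positive semidefinite, hence $\mathrm{II}(\nabla f,\nabla f)\ge 0$ and $\partial_\nu|\nabla f|^2\le 0$ at $x_0$; as $x_0$ was arbitrary the inequality holds on all of $\partial\Omega$. The step requiring the most care is the sign bookkeeping in identifying $\nabla f\cdot\partial_\tau\nu$ with $\mathrm{II}(\nabla f,\nabla f)$ and matching it to the convexity convention. I would pin down the orientation against the unit ball, where the outer normal is $\nu=x$, the tangential derivative is $\partial_\tau\nu=\tau$, and $\mathrm{II}(\tau,\tau)=|\tau|^2\ge 0$, confirming that convexity forces the nonnegativity in the right direction.
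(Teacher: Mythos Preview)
Your argument is correct and is precisely the standard proof: compute $\partial_\nu|\nabla f|^2 = 2\,D^2f(\nabla f,\nu)$, differentiate the boundary identity $\nabla f\cdot\nu=0$ tangentially to rewrite this as $-2\,\mathrm{II}(\nabla f,\nabla f)$, and conclude via the sign of the second fundamental form on a convex boundary. The paper itself does not supply a proof but simply cites Evans' lecture notes \cite[Ch.~4, B-2-b]{Ev}; your write-up is essentially what one finds there, including the careful sign check against the unit ball.
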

The proof of Lemma \ref{lem;Evans} can be found in \cite[Ch.4, B-2-b, 95 page]{Ev}.

It is worth mentioning that if $\Omega$ is convex and
 \begin{align*}
 ua(u) - F(u) \ge 0,
 \end{align*}
 then Theorem \ref{thm;FI} reads
	\[
	\frac{1}{2}\frac{d}{dt}\int_{\Omega} u |\nabla \Lambda(u)|^2 dx  \leq \,0.
	\]
The important application of Theorem \ref{thm;FI} is the following:
\begin{cor}[A straightforward example]\label{cor;monotonicity}
Let $\Omega$ be a convex smooth bounded domain in~$\r^n$.
Suppose assumptions as in Theorem \ref{thm;1D-FI}.
Let $a(u)= m u^{m-1}$ satisfying
\begin{align*}
1-\frac1n\le\, m.
\end{align*}
Then, the time-monotonicity of Fisher information is preserved along the trajectories of \eqref{eqn;NDE}, that is,
\begin{align*}
\frac{1}{2}\frac{d}{dt}\int_{\Omega} u |\nabla \Lambda(u)|^2 dx  \leq \,&0.
\end{align*}
\end{cor}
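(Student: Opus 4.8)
The plan is to verify that the hypotheses of Theorem \ref{thm;FI} are met and then to check the sign condition $u a(u) - F(u) \ge 0$ for the particular choice $a(s) = m s^{m-1}$ under the assumption $m \ge 1 - \tfrac1n$. Once this nonnegativity is established, Theorem \ref{thm;FI} together with the convexity of $\Omega$ (which kills the boundary term via Lemma \ref{lem;Evans}) gives
\[
\frac{1}{2}\frac{d}{dt}\int_{\Omega} u |\nabla \Lambda(u)|^2 dx
\le -\int_{\Omega} F(u) |D^2 \Lambda(u)|^2\,dx - \int_{\Omega}(u a(u) - F(u))|\Delta \Lambda(u)|^2\,dx \le 0,
\]
since $F(u) \ge 0$ as well. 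So the whole matter reduces to an elementary computation with the primitive $F$.

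First I would compute $F$ explicitly. For $a(s) = m s^{m-1}$ one has $F(s) = \int_0^s m\tau^{m-1}\,d\tau = s^m$ whenever $m > 0$ (the integral converging at $0$). Then $s a(s) = m s^m$, so
\[
s a(s) - F(s) = m s^m - s^m = (m-1) s^m .
\]
At first glance this is nonnegative only for $m \ge 1$, which would be far too restrictive; the point is that the coefficient $u a(u) - F(u)$ in Theorem \ref{thm;FI} multiplies $|\Delta \Lambda(u)|^2$, and one should instead group it with the Hessian term using the pointwise inequality $|\Delta f|^2 \le n |D^2 f|^2$ (Cauchy--Schwarz on the diagonal of the Hessian). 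Thus, when $m - 1 < 0$,
\[
-\int_\Omega F(u)|D^2\Lambda(u)|^2\,dx - \int_\Omega (m-1)u^m |\Delta\Lambda(u)|^2\,dx
\le -\int_\Omega u^m\Big(1 + n(m-1)\Big)|D^2\Lambda(u)|^2\,dx,
\]
and $1 + n(m-1) \ge 0$ is exactly the hypothesis $m \ge 1 - \tfrac1n$. (When $m \ge 1$ both terms are already individually nonpositive, so nothing is needed.) Either way the right-hand side is $\le 0$.

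The only genuine point requiring care — and what I expect to be the main obstacle — is the regularity/positivity bookkeeping needed to legitimately invoke Theorem \ref{thm;FI}: one needs $u$ to be a positive classical solution in $C^3(\bar\Omega)$ so that $\Lambda(u) \in C^3$ and all the integrations by parts in the proof of Theorem \ref{thm;FI} are valid. For $a(s) = m s^{m-1}$ with $m < 1$ the diffusion is singular where $u = 0$, so strictly speaking the statement should be read for solutions that stay positive (as indeed the hypotheses of the cited Theorem assume), and I would simply inherit these assumptions rather than reprove them; alternatively one works with $a(s) = m(1+s)^{m-1}$, for which $F(s) = (1+s)^m - 1$ and $s a(s) - F(s) = (m-1)s(1+s)^{m-1} - (1+s)^m + 1$, and the same grouping with $|\Delta f|^2 \le n|D^2 f|^2$ combined with $(1+s)^m - 1 \le m s (1+s)^{m-1}$ (convexity/concavity of $s \mapsto (1+s)^m$ depending on the sign of $m-1$) yields the bound under the identical threshold $m \ge 1 - \tfrac1n$. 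I would present the clean power-law computation as the main line and remark on the regularized nonlinearity in passing.
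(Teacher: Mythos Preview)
Your proposal is correct and follows essentially the same route as the paper: compute $F(u)=u^m$ and $ua(u)-F(u)=(m-1)u^m$, handle $m\ge 1$ directly, and for $1-\tfrac1n\le m<1$ absorb the negative $(m-1)u^m|\Delta\Lambda(u)|^2$ term into the Hessian term via the pointwise inequality $|\Delta f|^2\le n|D^2 f|^2$, yielding the nonnegative coefficient $1+n(m-1)$. The paper does not carry out the regularized $a(s)=m(1+s)^{m-1}$ variant you sketch at the end, but the core argument is identical.
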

\begin{pr}{Corollary \ref{cor;monotonicity}}
It is clear how to proceed in the linear case $m=1$ and the degenerate diffusion case $m>1$.
Indeed, for $m\ge1$, it follows that $ua(u) - F(u) = (m-1)u^m$, so that
\begin{align*}
&\frac{1}{2}\frac{d}{dt}\int_{\Omega} u |\nabla \Lambda(u)|^2 dx
\\
=\,&-\int_{\Omega}u^m|D^2\Lambda(u)|^2dx-(m-1)\int_{\Omega}u^m|\Delta\Lambda(u)|^2dx
\\
&+\frac12\int_{\partial\Omega}F(u)\nabla |\nabla \Lambda(u)|^2\cdot \nu dS
\\
\le\,&-\int_{\Omega}u^m|D^2\Lambda(u)|^2dx-(m-1)\int_{\Omega}u^m|\Delta\Lambda(u)|^2dx\leq \,0,
\end{align*}
where we used Lemma \ref{lem;Evans}.

On the other hand, let $a(u)=mu^{m-1}$ with $1-1/n \le m < 1$.
Since $ua(u)=mu^m$ and $F(u)=u^m$, we see
\begin{align*}
&\frac{1}{2}\frac{d}{dt}\int_{\Omega} u |\nabla \Lambda(u)|^2 dx
\\
=\,&-\int_{\Omega}  u^m |D^2 \Lambda(u)|^2dx-\int_{\Omega}(m-1)u^m |\Delta \Lambda(u)|^2dx
\\&+\frac12\int_{\partial\Omega}F(u)\nabla |\nabla \Lambda(u)|^2\cdot \nu dS
\\
\le\,&-\int_{\Omega} u^m |D^2 \Lambda(u)|^2dx+n(1-m)\int_{\Omega}u^m |D^2\Lambda(u)|^2dx
\\
=\,&-(1-n+nm)\int_{\Omega} u^m |D^2 \Lambda(u)|^2dx\leq \,0,
\end{align*}
where we use the well known point-wise estimate
\begin{align*}
|\Delta f |^2 \le\, n |D^2 f |^2.
\end{align*}
Thus, the proof is complete.
\end{pr}
\vspace{3mm}

The formulation in the case $m>1$
corresponds to the result of
Savar\'e--Toscani~\cite{Sa-To} in the whole space $\r^n$.
\vspace{3mm}

We are now in a position to prove Theorem \ref{thm;FI}.
\vspace{3mm}

\begin{pr}{Theorem \ref{thm;FI}}
First recalling that
\begin{align*}
u_t=\nabla\cdot (a(u)\nabla u) =\nabla\cdot ( u \nabla \Lambda(u) ),
\end{align*}
we then have
\begin{align*}
\frac{d}{dt} \int_{\Omega}H(u)dx
=\,&\int_{\Omega} \Lambda(u) \partial_tu dx
\\
=\,&\int_{\Omega} \Lambda(u) \nabla\cdot ( u \nabla \Lambda(u) ) dx=\,-\int_{\Omega}u |\nabla \Lambda(u)|^2dx,
\end{align*}
where the $0$-Neumann boundary condition for $u$ ensures that
\begin{align*}
\nabla \Lambda(u)\cdot \nu=\frac{a(u)}{u}\nabla u \cdot \nu =0\quad
\text{on}~~\partial\Omega.
\end{align*}

Next, it follows from integration by parts that
\begin{align*}
&\frac{1}{2}\frac{d}{dt}\int_{\Omega}u |\nabla \Lambda(u)|^2 dx
\\
=\,&\frac{1}{2} \int_{\Omega} \partial_t u |\nabla \Lambda(u)|^2dx+\int_{\Omega}u\nabla \Lambda(u)\cdot\nabla \partial_t \Lambda(u)dx
\\
=\,&-\frac{1}{2} \int_{\Omega} u \nabla \Lambda(u)\cdot \nabla |\nabla \Lambda(u)|^2dx
\\
&+\int_{\Omega}u \nabla \Lambda(u)\cdot \nabla \left(\frac{a(u)}{u}\nabla \cdot \left[u\nabla \Lambda(u)\right]\right)dx
\\
=\,&-\frac{1}{2}\int_{\partial\Omega}F(u)\nabla |\nabla \Lambda(u)|^2\cdot \nu dS+\frac{1}{2}\int_{\Omega} F(u) \Delta |\nabla \Lambda(u)|^2dx
\\
&+\int_{\Omega}u \nabla \Lambda(u)\cdot \nabla \left(|\nabla \Lambda(u)|^2+a(u)\Delta \Lambda(u)\right)dx
\\
=\,&\frac{1}{2}\int_{\partial\Omega}F(u)\nabla |\nabla \Lambda(u)|^2\cdot \nu dS-\frac{1}{2}\int_{\Omega} F(u) \Delta |\nabla \Lambda(u)|^2dx
\\
&+\int_{\Omega} a(u)\nabla F(u) \cdot \nabla \Delta \Lambda(u)dx+\int_{\Omega}\nabla F(u)\cdot \nabla a(u) \Delta \Lambda(u)dx.
\end{align*}
Here, noting that
\begin{align*}
\nabla F(u) = \, a(u) \nabla u = \, u\nabla \Lambda(u),
\end{align*}
we see that
\begin{align*}
\int_{\Omega} a(u)\nabla F(u) \cdot \nabla \Delta \Lambda(u)dx=\,&\int_{\Omega} ua(u) \nabla \Lambda(u)\cdot \nabla \Delta \Lambda(u)dx.
\end{align*}
Additionally,
\begin{align*}
\Delta F(u) =\, \nabla \cdot \left(u\nabla \Lambda(u)\right) =\, \nabla u \cdot \nabla \Lambda(u)+u\Delta \Lambda(u),
\end{align*}
so that
\begin{align*}
&\int_{\Omega}\nabla F(u)\cdot \nabla a(u) \Delta \Lambda(u)dx
\\
=&-\int_{\Omega}a(u) \Delta F(u) \Delta \Lambda(u)dx-\int_{\Omega}a(u) \nabla F(u)\cdot \nabla \Delta \Lambda(u)dx
\\
=&-\int_{\Omega}\nabla F(u) \cdot \nabla \Lambda(u)\Delta \Lambda(u)dx-\int_{\Omega} ua(u) |\Delta\Lambda(u)|^2 dx
\\
&-\int_{\Omega}ua(u) \nabla \Lambda(u)\cdot \nabla \Delta \Lambda(u)dx
\\
=\,&\int_{\Omega}F(u) |\Delta \Lambda(u)|^2dx+\int_{\Omega} F(u)\nabla \Lambda(u)\cdot \nabla \Delta \Lambda(u)dx
\\
&-\int_{\Omega} ua(u) |\Delta \Lambda(u)|^2dx-\int_{\Omega}ua(u) \nabla \Lambda(u)\cdot \nabla \Delta \Lambda(u)dx,
\end{align*}
where
\begin{align*}
\nabla F(u)\cdot \nu =\, a(u) \nabla u \cdot \nu =0\,\quad\text{on}~~\partial\Omega.
\end{align*}
Collecting above estimates leads us to
\begin{align*}
&\frac{1}{2}\frac{d}{dt}\int_{\Omega}u |\nabla \Lambda(u)|^2 dx
\\
=\,&\frac{1}{2}\int_{\partial\Omega}F(u)\nabla |\nabla \Lambda(u)|^2\cdot \nu dS-\frac12\int_{\Omega} F(u) \Delta |\nabla \Lambda(u)|^2dx
\\
&+\int_{\Omega} F(u) \nabla \Lambda(u)\cdot \nabla \Delta \Lambda(u)dx-\int_{\Omega} \left( ua(u) - F(u)\right) |\Delta \Lambda(u)|^2 dx
\\
=\,&\frac{1}{2}\int_{\partial\Omega}F(u)\nabla |\nabla \Lambda(u)|^2\cdot \nu dS
\\
&-\int_{\Omega} F(u)|D^2 \Lambda(u)|^2dx-\int_{\Omega} \left( ua(u) - F(u)\right) |\Delta \Lambda(u)|^2dx,
\end{align*}
where we use the Bochner formula stated in Lemma \ref{lem;Bochner},
\[
-\frac{1}{2}\Delta|\nabla \Lambda(u)|^2+\nabla\Lambda(u)\cdot\nabla\Delta\Lambda(u)=-|D^2\Lambda(u)|^2.
\]
Finally, if $\Omega$ is convex, Lemma \ref{lem;Evans} implies that
\begin{align*}
\nabla |\nabla \Lambda(u)|^2\cdot \nu \leq \,0\quad
\text{on}~~\partial\Omega,
\end{align*}
consequently,
\begin{align*}
&\frac{1}{2}\frac{d}{dt}\int_{\Omega}u |\nabla \Lambda(u)|^2 dx
\\
\leq\,&-\int_{\Omega} F(u)|D^2 \Lambda(u)|^2dx-\int_{\Omega} \left( ua(u) - F(u)\right) |\Delta \Lambda(u)|^2 dx.
\end{align*}
Hence, the proof is complete.		
\end{pr}

\section{Alternative formulation of the nonlinear Fisher information}\label{druga}

This section deals with the introduction of the main idea of the paper.
We rewrite the Fisher information and derive the identity satisfied in our formulation along the flow of \eqref{eqn;NDE}.
Let us first introduce the necessary quantities to play main roles in our derivation.

As a consequence of our derivation, we shall show a simplified argument of a derivation of a Lyapunov functional that was a main tool in proving the global solutions to the one dimensional nonlinear diffusion critical Keller--Segel system, obtained in \cite{Ci-Fu18}, see Section \ref{4.5}.
Moreover, in Section \ref{4.66} we shall use our approach to solve the so far open problem if the solution of the 1D critical nonlinear diffusion/nonlinear sensitivity Keller--Segel system admits the critical mass.
Our result states that there is no critical mass phenomenon in the considered case.

\begin{theorem}\label{thm;1D-FI}
Let $a$ be a positive function.
For $\Omega=(0,1)$ and $T>0$,
let $u$ be the positive classical solution to problem \eqref{eqn;NDE},
set the primitive functionals $\Lambda(s)$, $H(s)$ and $\Sigma(s)$ on $(0,\infty)$ by
\begin{equation}
\Lambda(s):=\int_{1}^s \frac{a(\tau)}{\tau} d\tau,
\quad
H(s):= \int_1^s \Lambda(\tau) d\tau,	
\quad
\Sigma(s):=\int_1^s \frac{a(\tau)}{\sqrt{\tau}} d\tau.
\label{eqn;plimitive}
\end{equation}
Then, the solution to \eqref{eqn;NDE} satisfies:
\begin{align}
\frac{d}{dt}\int_{0}^1H(u) dx= - \int_{0}^1 |\partial_x \Sigma(u)|^2 dx.
\label{eqn;entropy}
\end{align}
Moreover, we have
\begin{align}
\frac{1}{2}\frac{d}{dt}\int_0^1|\partial_x \Sigma(u)|^2dx
=-\int_0^1ua(u)\left|\partial_x\left(\frac{1}{\sqrt{u}}\partial_x\Sigma(u)\right)\right|^2 dx.
\label{eqn;1D-Fisher}
\end{align}
\end{theorem}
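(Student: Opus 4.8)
The proof of Theorem~\ref{thm;1D-FI} is a one-dimensional direct computation, paralleling the structure of Theorem~\ref{thm;FI} but written in terms of the new variable $\Sigma(u)$ rather than $\Lambda(u)$. First I would record the basic identities relating the primitives: since $\Lambda'(s)=a(s)/s$ and $\Sigma'(s)=a(s)/\sqrt{s}$, one has $\partial_x\Sigma(u)=\sqrt{u}\,\partial_x\Lambda(u)$ pointwise, and the flux can be written $u_t=\partial_x(a(u)\partial_x u)=\partial_x(u\,\partial_x\Lambda(u))=\partial_x(\sqrt{u}\,\partial_x\Sigma(u))$. The first identity \eqref{eqn;entropy} is then immediate: multiply $u_t$ by $\Lambda(u)=H'(u)$, integrate over $(0,1)$, integrate by parts using the Neumann condition $\partial_x\Lambda(u)=0$ at $x=0,1$, and use $u\,|\partial_x\Lambda(u)|^2=|\partial_x\Sigma(u)|^2$.

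For the second identity \eqref{eqn;1D-Fisher}, I would differentiate $\tfrac12\int_0^1|\partial_x\Sigma(u)|^2\,dx$ in time, producing $\int_0^1 \partial_x\Sigma(u)\,\partial_x(\Sigma'(u)u_t)\,dx$. Integrating by parts and inserting $u_t=\partial_x(\sqrt{u}\,\partial_x\Sigma(u))$ gives, after using the boundary conditions, an expression of the form $-\int_0^1 \partial_x\!\big(\tfrac{a(u)}{\sqrt u}\,\partial_x\Sigma(u)\big)\,\partial_x(\sqrt u\,\partial_x\Sigma(u))\,dx$. The right-hand side of \eqref{eqn;1D-Fisher} can likewise be expanded: writing $w:=\partial_x\!\big(\tfrac{1}{\sqrt u}\partial_x\Sigma(u)\big)$, the claim is that the two expressions agree. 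The cleanest route is to set $v:=\Sigma(u)$ and $g:=\partial_x v$, note $\tfrac{1}{\sqrt u}\partial_x\Sigma(u)=\partial_x\Lambda(u)$, and reduce everything to a 1D Bochner-type identity: in one dimension the analogue of Lemma~\ref{lem;Bochner} is simply the elementary identity $\partial_x\phi\,\partial_x(\partial_{xx}\phi)=-(\partial_{xx}\phi)^2+\tfrac12\partial_{xx}|\partial_x\phi|^2$, and $|D^2\phi|^2=|\Delta\phi|^2=(\partial_{xx}\phi)^2$, so the two error terms present in Theorem~\ref{thm;FI} collapse together and the $\int(ua(u)-F(u))|\Delta\Lambda|^2$ term disappears entirely. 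Tracking the coefficients carefully, the single remaining term is exactly $-\int_0^1 ua(u)\,|w|^2\,dx$.

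I expect the main technical obstacle to be the bookkeeping in the integration by parts: several terms involving $\partial_x a(u)$, $\partial_x u$, and the various primitives appear, and one must verify that all boundary contributions vanish (they do, because $\partial_x u = 0$ forces $\partial_x\Lambda(u)=\partial_x\Sigma(u)=\partial_x F(u)=0$ at the endpoints, and the higher-order boundary term that survived in the convex higher-dimensional case is, in 1D on an interval, killed by the same Neumann condition via the elementary fact that $\partial_x|\partial_x\phi|^2 = 2\,\partial_x\phi\,\partial_{xx}\phi$ vanishes where $\partial_x\phi=0$). The other point requiring care is the algebraic identity matching the grouped dissipation term to $ua(u)\,|\partial_x(u^{-1/2}\partial_x\Sigma(u))|^2$; I would verify it by expanding $w = \partial_x(\tfrac{1}{\sqrt u}\partial_x\Sigma(u))$ using $\tfrac{1}{\sqrt u}\partial_x\Sigma(u)=\partial_x\Lambda(u)$, so that $w=\partial_{xx}\Lambda(u)$, and then confront $-\int ua(u)\,(\partial_{xx}\Lambda(u))^2$ with the output of the integration by parts, using $ua(u)=u\,\Sigma'(u)\sqrt u$ and the relation $\partial_x\Sigma(u)=\sqrt u\,\partial_x\Lambda(u)$ to reconcile the weights. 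Once these two pieces are in place the identity \eqref{eqn;1D-Fisher} follows, and no convexity or sign hypothesis on $ua(u)-F(u)$ is needed — this is precisely the advantage of the 1D reformulation over Theorem~\ref{thm;FI}.
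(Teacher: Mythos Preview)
Your plan is sound and leads to a correct proof, but it follows a different organizing principle than the paper's direct argument. You reduce everything to the $\Lambda$-variable via the pointwise identity $\tfrac{1}{\sqrt{u}}\partial_x\Sigma(u)=\partial_x\Lambda(u)$, so that $w=\partial_{xx}\Lambda(u)$ and the claim becomes $\tfrac12\tfrac{d}{dt}\int_0^1 u|\partial_x\Lambda(u)|^2\,dx=-\int_0^1 ua(u)\,(\partial_{xx}\Lambda(u))^2\,dx$, i.e.\ the one-dimensional case of Theorem~\ref{thm;FI}; this is precisely the shortcut recorded in Remark~\ref{rem;FI}. The paper, by contrast, carries out the whole computation purely in the $\Sigma$-variable: after the integration by parts it isolates the three terms $-\int a(u)|\partial_{xx}\Sigma(u)|^2+\int u^{-1/2}|\partial_x\Sigma(u)|^2\partial_{xx}\Sigma(u)-\tfrac14\int (ua(u))^{-1}|\partial_x\Sigma(u)|^4$ and recognizes them as the perfect square $-\int\big|\sqrt{a(u)}\,\partial_{xx}\Sigma(u)-\tfrac{1}{2\sqrt{ua(u)}}|\partial_x\Sigma(u)|^2\big|^2$. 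Your route is shorter if one is willing to quote Theorem~\ref{thm;FI}; the paper's route is self-contained and, as the authors note, the explicit $\Sigma$-based manipulations are what get reused verbatim in the Keller--Segel applications of Section~\ref{czwarta}.

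One small correction to your bookkeeping: after integrating by parts once you obtain $-\int_0^1 \tfrac{a(u)}{\sqrt{u}}\,\partial_{xx}\Sigma(u)\cdot\partial_x(\sqrt{u}\,\partial_x\Sigma(u))\,dx$, not $-\int_0^1 \partial_x\big(\tfrac{a(u)}{\sqrt{u}}\partial_x\Sigma(u)\big)\cdot\partial_x(\sqrt{u}\,\partial_x\Sigma(u))\,dx$ as you wrote; the factor $\tfrac{a(u)}{\sqrt{u}}$ sits outside the derivative. This does not affect the rest of your argument, since once you pass to $\Lambda$ the discrepancy disappears, but it would cause trouble if you tried to match terms directly in the $\Sigma$-formulation.
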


\begin{rem}\label{rem;FI}
When $n=1$, then Theorem \ref{thm;FI} corresponds to Theorem \ref{thm;1D-FI}.
Indeed, let $\Omega=(0,1)$ and $\partial_x u(t,0) =\partial_x u(t,1)=0$.
By virtue of the fact that $|D^2 \Lambda(u) | = |\partial_{xx} \Lambda(u)|=|\Delta \Lambda(u)|$ for $n=1$, and
$\partial_x \Sigma(u) = \sqrt{u} \partial_x \Lambda(u)$, we have
\begin{align*}
\frac{1}{2}\frac{d}{dt}\int_{0}^1  |\partial_x \Sigma(u)|^2 dx
=\,&\frac{1}{2}\frac{d}{dt}\int_{0}^1 u |\partial_x \Lambda(u)|^2 dx
\\
=\,&-\int_{0}^1 u a(u) \left| \partial_x^2 \Lambda(u) \right|^2dx
\\
=\,&-\int_{0}^1 u a(u) \left| \partial_x\left(\frac{1}{\sqrt{u}}\partial_x \Sigma(u)\right) \right|^2dx.
\end{align*}
We shall give a straightforward proof of Theorem \ref{thm;1D-FI} below.
Although it also follows from Remark \ref{rem;FI}, presenting the proof here will be useful for our study of the 1D Keller--Segel system.
We also hope that the calculation can be applied to other types of PDE systems.
In fact, the calculation itself plays an important role when extending the method to systems of PDEs; see Section~\ref{czwarta}.
\end{rem}
\begin{rem}
Since $a(s)$ is positive, the function $t\mapsto H(u(t))$ is convex.
Indeed, \eqref{eqn;entropy} together with \eqref{eqn;1D-Fisher} yield
\begin{align*}
\frac{d^2}{dt^2} \int_{0}^1 H(u) dx=\,& - \frac{d}{dt}\int_{0}^1 \left|\partial_x\Sigma(u)\right|^2 dx
\\
=\,&2\int_0^1ua(u)\left|\partial_x\left(\frac{1}{\sqrt{u}}\partial_x\Sigma(u)\right)\right|^2 dx\geq\,0.
\end{align*}
\end{rem}
We remark that if $a(u)\equiv \text{const.}$, the above identity is exactly the time-evolution of the Fisher information for
the Boltzmann--Shannon entropy~\eqref{wstepne_bis}.
\vspace{2mm}

\begin{pr}{Theorem \ref{thm;1D-FI}}
The proof of \eqref{eqn;entropy} follows immediately by integration by parts.
Indeed,
\begin{align*}
\frac{d}{dt}\int_{0}^1H(u) dx
=\,&\int_{0}^1 \Lambda(u) \partial_t u dx
\\
=\,&-\int_{0}^1 a(u) \partial_x \Lambda(u) \partial_x u dx
\\
=\,&-\int_{0}^1 \frac{a(u)^2}{u} |\partial_x u|^2 dx
=\,-\int_{0}^1 |\partial_x \Sigma(u)|^2 dx.
\end{align*}
Noticing from the definition \eqref{eqn;plimitive} of $\Sigma$ that
\begin{align}
\partial_x\Sigma(u)
=\frac{a(u)}{\sqrt{u}}\partial_xu,
\quad
\partial_t\Sigma(u)=\frac{a(u)}{\sqrt{u}}\partial_tu,
\quad
\Sigma''(s)=\frac{a'(s)}{\sqrt{s}}-\frac{a(s)}{2s\sqrt{s}},
\label{eqn;sigma-formulation}
\end{align}
we then see
\begin{align}
\frac{1}{2}\frac{d}{dt}\int_0^1|\partial_x \Sigma(u)|^2dx
=\,&\int_0^1\partial_x\Sigma(u) \partial_x\partial_t\Sigma(u)dx
\nonumber\\
=\,&\int_0^1\partial_x\Sigma(u) \partial_x\left(\frac{a(u)}{\sqrt{u}} \partial_tu\right)dx
\nonumber\\
=\,&\int_0^1\partial_x\Sigma(u) \partial_x\left(\frac{a(u)}{\sqrt{u}} \partial_x\big[a(u)\partial_x u\big]\right)dx
\nonumber\\
=\,&\int_0^1\partial_x\Sigma(u) \partial_x\left(\frac{a(u)}{\sqrt{u}} \big[a'(u)(\partial_x u)^2+a(u)\partial_{xx}u\big]\right)dx.
\label{eqn;1D-1}
\end{align}
Since
\begin{align*}
&\frac{a(u)}{\sqrt{u}} \big[a'(u)(\partial_x u)^2+a(u)\partial_{xx}u\big]
\\
=\,&a(u) \left\{\left[\frac{a'(u)}{\sqrt{u}}-\frac{a(u)}{2u\sqrt{u}}\right](\partial_x u)^2+\frac{a(u)}{\sqrt{u}}\partial_{xx}u\right\}+\frac{a(u)^2}{2u\sqrt{u}}(\partial_xu)^2
\\
=\,&a(u) \left\{\Sigma''(u)(\partial_xu)^2+\Sigma'(u)\partial_{xx}u\right\}+\frac{1}{2\sqrt{u}} \left| \Sigma'(u) \partial_xu\right|^2
\\
=\,&a(u) \partial_{xx}\Sigma(u) +\frac{1}{2\sqrt{u}}\left|\partial_x\Sigma(u)\right|^2,
\end{align*}
it follows from the integral \eqref{eqn;1D-1} that
\begin{align}
\frac{1}{2}\frac{d}{dt}\int_0^1|\partial_x \Sigma(u)|^2dx
=\,&\int_0^1\partial_x\Sigma(u) \partial_x\left(a(u) \partial_{xx}\Sigma(u) +\frac{1}{2\sqrt{u}}\left|\partial_x\Sigma(u)\right|^2\right)dx
\nonumber\\
=\,&\int_0^1\partial_x\Sigma(u) \partial_x\Big(a(u) \partial_{xx}\Sigma(u) \Big)dx
\nonumber\\
&+\int_0^1\partial_x\Sigma(u) \partial_x\left(\frac{1}{2\sqrt{u}}\left|\partial_x\Sigma(u)\right|^2\right)dx.
\label{eqn;1D-2}
\end{align}
On the one hand, the first term on the right hand side of \eqref{eqn;1D-2} implies by integration by parts that
\begin{align*}
\int_0^1\partial_x\Sigma(u) \partial_x\Big(a(u) \partial_{xx}\Sigma(u) \Big)dx
=\,-\int_0^1 a(u) \left|\partial_{xx}\Sigma(u)\right|^2dx.
\end{align*}
On the other hand, the second term of \eqref{eqn;1D-2} is represented as
\begin{align*}
\int_0^1\partial_x\Sigma(u) \partial_x\left(\frac{1}{2\sqrt{u}}\left|\partial_x\Sigma(u)\right|^2\right)dx
=\,&-\int_0^1\frac{1}{4ua(u)} \left|\partial_x\Sigma(u)\right|^4 dx
\\
&+\int_0^1\frac{1}{\sqrt{u}} \left|\partial_x\Sigma(u)\right|^2 \partial_{xx}\Sigma(u) dx,
\end{align*}
where we used \eqref{eqn;sigma-formulation} to see that
\begin{align*}
\partial_x\left(\frac{1}{2\sqrt{u}}\left|\partial_x\Sigma(u)\right|^2\right)
=\,&-\frac{\partial_x u}{4u\sqrt{u}} \left|\partial_x\Sigma(u)\right|^2+\frac{1}{\sqrt{u}} \partial_x\Sigma(u) \partial_{xx}\Sigma(u)
\\
=\,&-\frac{1}{4u a(u)} \partial_x\Sigma(u) \left|\partial_x\Sigma(u)\right|^2+\frac{1}{\sqrt{u}} \partial_x\Sigma(u) \partial_{xx}\Sigma(u).
\end{align*}
Combining the above computations, we end up with
\begin{align*}
&\frac{1}{2}\frac{d}{dt}\int_0^1|\partial_x \Sigma(u)|^2dx
\\
=\,&-\int_0^1 a(u) \left|\partial_{xx}\Sigma(u)\right|^2dx+\int_0^1\frac{1}{\sqrt{u}} \left|\partial_x\Sigma(u)\right|^2 \partial_{xx}\Sigma(u) dx
\\
&-\int_0^1\frac{1}{4ua(u)} \left|\partial_x\Sigma(u)\right|^4 dx
\\
=\,&-\int_0^1\left|\sqrt{a(u)}\partial_{xx}\Sigma(u) - \frac{1}{2\sqrt{u a(u)}} \left|\partial_x\Sigma(u) \right|^2\right|^2 dx
\\
=\,&-\int_0^1ua(u)\left|\partial_x\left(\frac{1}{\sqrt{u}}\partial_x\Sigma(u)\right)\right|^2 dx,
\end{align*}
as desired.
\end{pr}
\vspace{3mm}
\begin{rem}
Following the proof of Theorem \ref{thm;1D-FI}, one notices that in higher dimensions we obtain the following identity.
For any $n\ge2$ we have
\begin{align*}
&\frac{1}{2}\frac{d}{dt}\int_{\Omega}|\nabla \Sigma(u)|^2dx
\\
=\,& - \int_{\Omega} a(u) | \Delta \Sigma(u)|^2dx+\int_{\Omega} \frac{1}{\sqrt{u}}(\nabla \Sigma (u))^{T} D^2\Sigma(u) \nabla \Sigma(u)dx
\\
&- \int_{\Omega} \frac{1}{4ua(u) } |\nabla \Sigma(u)|^4dx
\\
=\,&-\int_{\Omega} ua(u) \left| \nabla \left(\frac{1}{\sqrt{u}} \nabla \Sigma(u)\right) \right|^2 dx
\\
&-\int_{\Omega} a(u)|\Delta \Sigma(u)|^2dx + \int_{\Omega} a(u)|D^2 \Sigma(u)|^2dx.
\end{align*}
\end{rem}
\vspace{3mm}
\section{Related functional inequalities}\label{trzecia}

As we already mentioned, the Fisher information has been recently used in the studies of thermoelasticity (see \cite{Bi-Ci23, Bi-Ci25,Bi-Ci-La-Mu-Tr}) and combustion, see \cite{Li-Ya24}.
Its applicability to systems of PDEs is strongly related to the functional inequality \eqref{wstepne}.
In view of the latter inequality, the gradients of bad terms can be absorbed into the negative of the dissipation term of the Fisher information.
In case of nonlinear Fisher information, the role of the dissipation term is played by
\[
\int_{\Omega}ua(u)\left|\nabla \left(\frac{1}{\sqrt{u}}\nabla\Sigma(u)\right)\right|^2 dx.
\]
We shall hence give the corresponding functional inequalities relating dissipation terms of nonlinear Fisher information, in any space dimension, hoping that they can be used in systems of PDEs involving the nonlinear diffusion equation.
\begin{theorem}\label{thm;Fisher-ineq}
For $n\geq 1$, let $\Omega\subset\r^n$ be a smooth bounded domain.
For any positive function $u\in C^2(\bar{\Omega})$ satisfying $\nabla u \cdot \nu =0$ on $\partial\Omega$ and any smooth function $a:[0,\infty)\rightarrow \r_+$
\begin{align}
\int_{\Omega}  \frac{a(u)^3}{u^3}\left|\nabla u\right|^4 dx
\leq\,&(1+\sqrt{n})^2\int_{\Omega} ua(u)\left| \nabla\left( \frac{1}{\sqrt{u}} \nabla \Sigma(u)\right)  \right|^2dx.
\label{eqn;Beris}
\end{align}
Moreover, suppose that $a(s)$ is bounded from below, that is, there is a constant $\lambda>0$ such that
\[
a(s) \geq \lambda\quad \text{for all}~~s\geq 0.
\]
Then,
\begin{align}
\int_{\Omega} |D^2 \Sigma (u) |^2dx
\leq\,&\frac{4+(1+\sqrt{n})^2}{2\lambda}\int_{\Omega} u a(u)\left|\nabla \left(\frac{1}{\sqrt{u}}\nabla \Sigma(u)\right)\right|^2dx,
\label{eqn;Fisher-ineq}
\end{align}
where $\Sigma(s)$ is defined in \eqref{eqn;plimitive} as
\begin{align*}
\Sigma(s):=\int_1^s \frac{a(\tau)}{\sqrt{\tau}} d\tau.
\end{align*}
\end{theorem}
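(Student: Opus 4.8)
The plan is to expand the ``target'' quantity $\nabla\bigl(\tfrac{1}{\sqrt u}\nabla\Sigma(u)\bigr)$ in terms of $\nabla u$ and $D^2 u$ (equivalently $D^2\Sigma(u)$ and $\nabla\Sigma(u)$), so that we can read off both inequalities by elementary pointwise algebra and then integrate. First I would use the identities in \eqref{eqn;sigma-formulation}, in the $n$-dimensional form, namely $\nabla\Sigma(u)=\tfrac{a(u)}{\sqrt u}\nabla u$, to write
\[
\nabla\!\left(\frac{1}{\sqrt u}\nabla\Sigma(u)\right)
= \frac{1}{\sqrt u}\,D^2\Sigma(u) \;-\; \frac{1}{2u\sqrt u}\,\nabla u\otimes\nabla\Sigma(u)
= \frac{1}{\sqrt u}\,D^2\Sigma(u) \;-\; \frac{1}{2u\,a(u)}\,\nabla\Sigma(u)\otimes\nabla\Sigma(u),
\]
where in the last step I substituted $\nabla u=\tfrac{\sqrt u}{a(u)}\nabla\Sigma(u)$. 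Multiplying through by $\sqrt{u\,a(u)}$, the integrand of the right-hand side of \eqref{eqn;Beris}/\eqref{eqn;Fisher-ineq} becomes
\[
u\,a(u)\left|\nabla\!\left(\frac{1}{\sqrt u}\nabla\Sigma(u)\right)\right|^2
= \left| \sqrt{a(u)}\,D^2\Sigma(u) \;-\; \frac{1}{2\sqrt{u\,a(u)}}\,\nabla\Sigma(u)\otimes\nabla\Sigma(u)\right|^2 ,
\]
which matches exactly the square completed at the end of the proof of Theorem~\ref{thm;1D-FI}. This is the structural identity that drives everything.

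Next, for \eqref{eqn;Beris}, I want to isolate the rank-one term $\tfrac{1}{2\sqrt{u\,a(u)}}\,\nabla\Sigma(u)\otimes\nabla\Sigma(u)$, whose Frobenius norm is $\tfrac{1}{2\sqrt{u\,a(u)}}|\nabla\Sigma(u)|^2 = \tfrac{1}{2}\,\sqrt{\tfrac{a(u)}{u^3}}\,|\nabla u|^2$ after using $|\nabla\Sigma(u)|^2 = \tfrac{a(u)^2}{u}|\nabla u|^2$; its square is $\tfrac14\,\tfrac{a(u)^3}{u^3}|\nabla u|^4$, i.e. precisely the left-hand integrand of \eqref{eqn;Beris} up to the factor $\tfrac14$. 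So by the triangle inequality in $L^2(\Omega)$ applied to the two-term decomposition above,
\[
\left\| \tfrac{1}{2\sqrt{u a(u)}}\,\nabla\Sigma(u)\otimes\nabla\Sigma(u)\right\|_{L^2}
\le \left\| \sqrt{u a(u)}\,\nabla\!\bigl(\tfrac{1}{\sqrt u}\nabla\Sigma(u)\bigr)\right\|_{L^2}
+ \left\| \sqrt{a(u)}\,D^2\Sigma(u)\right\|_{L^2}.
\]
To close, I need to control $\|\sqrt{a(u)}\,D^2\Sigma(u)\|_{L^2}$ by the dissipation term; this is where the $\sqrt n$ enters. I would take the Laplacian (trace) of the identity for $\nabla(\tfrac{1}{\sqrt u}\nabla\Sigma(u))$ — or rather integrate $\int_\Omega a(u)|\Delta\Sigma(u)|^2$ against it — and use the Bochner/integration-by-parts identity from the last Remark of Section~\ref{druga}, together with the pointwise bound $|\Delta\Sigma(u)|^2\le n|D^2\Sigma(u)|^2$, to obtain $\int_\Omega a(u)|D^2\Sigma(u)|^2 \le n \int_\Omega u a(u)|\nabla(\tfrac{1}{\sqrt u}\nabla\Sigma(u))|^2$ (the boundary terms vanish because $\nabla u\cdot\nu = 0$). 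Feeding $\|\sqrt{a(u)}\,D^2\Sigma(u)\|_{L^2}\le \sqrt n\,\|\sqrt{u a(u)}\,\nabla(\cdots)\|_{L^2}$ into the triangle inequality above and squaring yields \eqref{eqn;Beris} with constant $(1+\sqrt n)^2$.

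Finally, \eqref{eqn;Fisher-ineq} follows by combining the two estimates already in hand: from the displayed identity, $\|\sqrt{a(u)}\,D^2\Sigma(u)\|_{L^2} \le \|\sqrt{u a(u)}\,\nabla(\cdots)\|_{L^2} + \tfrac12\|\tfrac{1}{\sqrt{u a(u)}}\nabla\Sigma\otimes\nabla\Sigma\|_{L^2}$, and the second term on the right is $\le \tfrac{1+\sqrt n}{2}\|\sqrt{u a(u)}\,\nabla(\cdots)\|_{L^2}$ by (the square root of) \eqref{eqn;Beris}, so $\|\sqrt{a(u)}\,D^2\Sigma(u)\|_{L^2}\le \tfrac{3+\sqrt n}{2}\|\sqrt{u a(u)}\,\nabla(\cdots)\|_{L^2}$. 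Using $a(u)\ge\lambda$ on the left, $\lambda\int_\Omega|D^2\Sigma(u)|^2 \le \int_\Omega a(u)|D^2\Sigma(u)|^2$, and squaring gives a bound with constant $\tfrac{(3+\sqrt n)^2}{4\lambda}$; the stated constant $\tfrac{4+(1+\sqrt n)^2}{2\lambda}$ is obtained instead by squaring the triangle inequality with Young's inequality $(x+y)^2\le 2x^2+2y^2$ in the form $\|\sqrt{a(u)}D^2\Sigma\|^2 \le 2\|\sqrt{u a(u)}\nabla(\cdots)\|^2 + 2\cdot\tfrac14\|\tfrac{1}{\sqrt{u a(u)}}\nabla\Sigma\otimes\nabla\Sigma\|^2 \le (2 + \tfrac{(1+\sqrt n)^2}{2})\|\sqrt{u a(u)}\nabla(\cdots)\|^2$, and then dividing by $\lambda$. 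The main obstacle I anticipate is the integration-by-parts step establishing $\int_\Omega a(u)|D^2\Sigma(u)|^2 \le n\int_\Omega u a(u)|\nabla(\tfrac{1}{\sqrt u}\nabla\Sigma(u))|^2$: one must carefully check that all boundary contributions from differentiating $|\nabla\Sigma(u)|^2$ vanish under the Neumann condition (convexity is not assumed here, so this requires the zero-Neumann data on $u$ to kill the relevant traces directly rather than an inequality of Evans type), and track that no stray cross terms with the wrong sign survive.
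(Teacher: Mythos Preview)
Your treatment of \eqref{eqn;Fisher-ineq} is essentially the paper's: both use the pointwise identity
\[
\sqrt{a(u)}\,D^2\Sigma(u)=\sqrt{u\,a(u)}\,\nabla\!\left(\tfrac{1}{\sqrt u}\nabla\Sigma(u)\right)+\tfrac{1}{2\sqrt{u\,a(u)}}\,\nabla\Sigma(u)\otimes\nabla\Sigma(u),
\]
apply $(x+y)^2\le 2x^2+2y^2$, use $a\ge\lambda$, and then insert \eqref{eqn;Beris}. So that part is fine.

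The gap is in your route to \eqref{eqn;Beris}. You want to bound $\|\sqrt{a(u)}\,D^2\Sigma(u)\|_{L^2}$ by $\sqrt n\,\|\sqrt{u a(u)}\,\nabla(\tfrac{1}{\sqrt u}\nabla\Sigma(u))\|_{L^2}$ and then feed this into a triangle inequality. But the argument you sketch for this auxiliary bound does not close. The Remark at the end of Section~\ref{druga} is a \emph{time-evolution} identity along the flow \eqref{eqn;NDE}, not a static inequality for a generic $u$, so it cannot be invoked here. If instead you integrate the cross term by parts (which is what Bochner amounts to), you get
\[
\int_\Omega a(u)\,|D^2\Sigma(u)|^2\,dx
=\int_\Omega u\,a(u)\,\Bigl|\nabla\bigl(\tfrac{1}{\sqrt u}\nabla\Sigma(u)\bigr)\Bigr|^2\,dx
\;-\;\frac12\int_\Omega \tfrac{1}{\sqrt u}\,|\nabla\Sigma(u)|^2\,\Delta\Sigma(u)\,dx,
\]
and the last integral has no sign. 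Controlling it by Cauchy--Schwarz reintroduces $\int\tfrac{a(u)^3}{u^3}|\nabla u|^4$, which is exactly the quantity you are trying to bound --- the argument becomes circular. (Incidentally, the boundary terms you worried about are harmless here: the only trace that appears is $\tfrac{1}{\sqrt u}|\nabla\Sigma(u)|^2\,\nabla\Sigma(u)\cdot\nu$, which vanishes by the Neumann condition. The obstruction is in the bulk, not on $\partial\Omega$.)

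The paper avoids this circularity by switching from $\Sigma$ to $\Lambda$, using that $\nabla\bigl(\tfrac{1}{\sqrt u}\nabla\Sigma(u)\bigr)=D^2\Lambda(u)$. It writes $\int\tfrac{a(u)^3}{u^3}|\nabla u|^4=\int|\nabla\Lambda(u)|^2\,\nabla\Lambda(u)\cdot\nabla u$ and integrates by parts moving the derivative off the \emph{bare} factor $u$ (not off $\Lambda(u)$). This produces two terms, each bounded by Cauchy--Schwarz as $\mathrm{const}\cdot\bigl(\int u\,a(u)|D^2\Lambda(u)|^2\bigr)^{1/2}\bigl(\int\tfrac{a(u)^3}{u^3}|\nabla u|^4\bigr)^{1/2}$, i.e.\ a self-bootstrapping inequality $C\le c\sqrt{A}\sqrt{C}$, which gives $C\le c^2 A$ directly. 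That is the missing idea.
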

We point out from the definition of $\Lambda(u)$ defined in \eqref{eqn;premitive-higher} that
\begin{align*}
 \int_{\Omega} u a(u)\left|\nabla \left(\frac{1}{\sqrt{u}}\nabla \Sigma(u)\right)\right|^2dx
 =\,& \int_{\Omega} u a(u)\left|D^2\Lambda(u)\right|^2dx
 \\
 \le\,& n\int_{\Omega} u a(u)\left|\Delta\Lambda(u)\right|^2dx.
\end{align*}
It is worthwhile to state that in the linear diffusion case $a(s)\equiv\text{const.}$, the inequality~\eqref{eqn;Fisher-ineq} established in Theorem~\ref{thm;Fisher-ineq} is exactly \eqref{wstepne} obtained in~\cite{Ci-Ma-Ka-Mi}, see also the recent Riemannian setting extension in~\cite{Ci-Ga-Kr}.
\begin{align*}
\int_{\Omega} \frac{|\nabla u|^4}{u^3}\,dx \le\,&C \int_{\Omega} u |D^2\log u |^2dx.
\end{align*}
The inequality \eqref{eqn;Beris} is a generalization of the Bernis type inequality, see for instance \cite{Wi12}.
We point out that the version in \cite{Wi12}, also nonlinear is different from the one we present. We also note that its versions have already turned out to be applicable in chemotaxis system as well as chemotaxis--Navier--Stokes system, see \cite{Wi12}. We also mention the recent Riemannian geometry version in \cite{Ci-Ga-Kr}.
\vspace{2mm}

\begin{pr}{Theorem \ref{thm;Fisher-ineq}}
We begin with decomposing the integrand appearing on the left hand side of \eqref{eqn;Fisher-ineq}.
Under the definition of $\Sigma(u)$ and assumption on $a(u)$,  we see
\begin{align*}
&\int_{\Omega} \left|  D^2 \Sigma(u)\right|^2dx
\\
=\,&\int_{\Omega}\sum_{i,j=1}^{n} \left| \partial_i \left(\frac{\sqrt{u}}{\sqrt{u}} \partial_j \Sigma(u)\right)  \right|^2 dx
\\
=\,&\int_{\Omega}\sum_{i,j=1}^n \left| \sqrt{u}  \partial_i \left( \frac{1}{\sqrt{u}}\partial_j \Sigma(u)\right) + \frac{a(u)}{2u \sqrt{u}}\partial_i u \partial_j u\right|^2  dx
\\
=\,&\int_{\Omega} \frac{1}{a(u)}\sum_{i,j=1}^n\left| \sqrt{ua(u)}  \partial_i \left( \frac{1}{\sqrt{u}}\partial_j \Sigma(u)\right) + \frac{a(u)\sqrt{a(u)}}{2u \sqrt{u}}\partial_i u \partial_j u\right|^2dx
\\
\leq\,&\frac{1}{\lambda}\int_{\Omega}\sum_{i,j=1}^n\left| \sqrt{ua(u)}  \partial_i \left( \frac{1}{\sqrt{u}}\partial_j \Sigma(u)\right) + \frac{a(u)\sqrt{a(u)}}{2u \sqrt{u}}\partial_i u \partial_j u\right|^2 dx
\\
\le\,&\frac{2}{\lambda}\int_{\Omega} \sum_{i,j=1}^n\left[ \left| \sqrt{ua(u)}  \partial_i \left( \frac{1}{\sqrt{u}}\partial_j \Sigma(u)\right) \right|^2+ \left|\frac{a(u)\sqrt{a(u)}}{2u \sqrt{u}}\partial_i u \partial_j u\right|^2 ~\right] dx.
\end{align*}
Hence,
\begin{align}
&\int_{\Omega} \left|  D^2 \Sigma(u)\right|^2\,dx \notag
\\
\leq\,&\frac2\lambda\int_{\Omega}\left| \sqrt{ua(u)}  \nabla \left( \frac{1}{\sqrt{u}}\nabla \Sigma(u)\right) \right|^2 \,dx
+\frac{1}{2\lambda}\int_{\Omega}  \frac{a(u)^3}{u^3}\left|\nabla u\right|^4 \,dx.
\label{eqn;sigma-lambda-ineq}
\end{align}
In order to estimate the second term on the right hand side of \eqref{eqn;sigma-lambda-ineq}, let us recall the primitive functional~$\Lambda(s)$ defined in \eqref{eqn;premitive-higher}.
Integrating by parts the last term on the right-hand side of \eqref{eqn;sigma-lambda-ineq}, we obtain
\begin{align}
&\frac{1}{2\lambda}\int_{\Omega}  \frac{a(u)^3}{u^3}\left|\nabla u\right|^4 dx
\nonumber\\
=\,&\frac{1}{2\lambda}\int_{\Omega} |\nabla \Lambda(u)|^2 \nabla \Lambda(u)\cdot \nabla u dx
\nonumber\\
=\,&-\frac{1}{2\lambda}\int_{\Omega} u |\nabla \Lambda(u)|^2\Delta \Lambda(u)dx-\frac{1}{2\lambda}\int_{\Omega}u \nabla |\nabla \Lambda(u)|^2\cdot \nabla \Lambda(u)dx
\nonumber\\
=\,&-\frac{1}{2\lambda}\int_{\Omega} u |\nabla \Lambda(u)|^2\Delta \Lambda(u)dx-\frac{1}{\lambda}\int_{\Omega}u \left(\nabla\Lambda(u) \right)^T D^2 \Lambda(u) \nabla \Lambda(u)dx,
\label{eqn;lambda-ineq}
\end{align}
where
\[
\nabla \Lambda(u)\cdot \nu=\,\frac{a(u)}{u}\nabla u \cdot \nu =0\quad\text{on}~~\partial\Omega.
\]
Let us first deal with the second term on the right hand side of \eqref{eqn;lambda-ineq}.
Applying the Cauchy--Schwarz inequality, we obtain
\begin{align*}
&\left| \int_{\Omega}u \left(\nabla\Lambda(u) \right)^T D^2 \Lambda(u) \nabla \Lambda(u)dx   \right|
\\
\leq\,&\left(\int_{\Omega} \left| \sqrt{ua(u)} D^2\Lambda(u)  \right|^2dx \right)^{\frac{1}{2}}
\left( \int_{\Omega} \left| \frac{u}{\sqrt{u a(u)}} |\nabla \Lambda(u)|^2  \right|^2 dx \right)^{\frac{1}{2}}
\\
=\,&\left(\int_{\Omega} \left| \sqrt{ua(u)} \nabla\left( \frac{1}{\sqrt{u}} \nabla \Sigma(u)\right)  \right|^2dx \right)^{\frac{1}{2}}
\left( \int_{\Omega} \frac{a(u)^3}{u^3} |\nabla u|^4dx\right)^{\frac{1}{2}}.
\end{align*}
In a similar way, the first term on the right hand side of \eqref{eqn;lambda-ineq} is estimated as
\begin{align*}
&\left| \int_{\Omega} u |\nabla \Lambda(u)|^2\Delta \Lambda(u)dx \right|
\\
\leq\,&\int_{\Omega} u |\nabla \Lambda(u)|^2 |\Delta \Lambda(u)|dx
\\
\leq\,&\sqrt{n}\int_{\Omega}u |\nabla \Lambda(u)|^2 |D^2 \Lambda(u)|dx
\\
\leq\,&\sqrt{n}\left(\int_{\Omega} \left| \sqrt{ua(u)} \nabla\left( \frac{1}{\sqrt{u}} \nabla \Sigma(u)\right)  \right|^2dx \right)^{\frac{1}{2}}
\left( \int_{\Omega} \frac{a(u)^3}{u^3} |\nabla u|^4dx\right)^{\frac{1}{2}},
\end{align*}
where we use the point-wise estimate $|\Delta u | \leq \sqrt{n} |D^2 u |$ and the Cauchy--Schwarz inequality.
Hence, collecting the above estimates leads us to
\begin{align*}
&\frac{1}{2\lambda}\int_{\Omega}  \frac{a(u)^3}{u^3}\left|\nabla u\right|^4 dx
\\
\leq\,&\frac{1+\sqrt{n}}{2\lambda}\left(\int_{\Omega} \left| \sqrt{ua(u)} \nabla\left( \frac{1}{\sqrt{u}} \nabla \Sigma(u)\right)  \right|^2dx \right)^{\frac{1}{2}}
\left( \int_{\Omega} \frac{a(u)^3}{u^3} |\nabla u|^4dx\right)^{\frac{1}{2}},
\end{align*}
and by the positivity of $u$ and $a(u)$
\begin{align*}
\int_{\Omega}  \frac{a(u)^3}{u^3}\left|\nabla u\right|^4 dx
\leq\,&(1+\sqrt{n})^2\int_{\Omega} ua(u)\left| \nabla\left( \frac{1}{\sqrt{u}} \nabla \Sigma(u)\right)  \right|^2dx.
\end{align*}
Consequently, we have from \eqref{eqn;sigma-lambda-ineq},
\begin{align*}
\int_{\Omega} \left|  D^2 \Sigma(u)\right|^2dx
\leq\,&\frac{4+(1+\sqrt{n})^2}{2\lambda}\int_{\Omega}ua(u)\left|   \nabla \left( \frac{1}{\sqrt{u}}\nabla \Sigma(u)\right) \right|^2  dx,
\end{align*}
which ends the proof.
\end{pr}

\section{Applications to the fully parabolic quasilinear critical 1d Keller--Segel system}\label{czwarta}

Consider the following one-dimensional quasilinear Keller--Segel system with nonlinear sensitivity:
\begin{align}\label{P1}
	\begin{cases}
	u_t =\partial_x \left(D (u)\partial_x u - S (u)\partial_x v \right)
	&\mathrm{in}\ (0,T)\times(0,1), \\[1mm]
	v_t = \partial_{xx} v  -v+u
	&\mathrm{in}\ (0,T)\times(0,1),  \\[1mm]
	 \partial_x u = \partial_x v = 0 &\mathrm{on} \ (0,T) \times \{0, 1\}, \\[1mm]
	u(0,\cdot)=u_0,\quad v(0,\cdot)=v_0
	&\mathrm{in}\ (0,1),
	\end{cases}
\end{align}
where
$$
D(u) = (1+u)^{-p},\quad S(u) = u(1+u)^{-q} \qquad \mbox{with }p,q\in \r.
$$
Furthermore we assume $(u_0, v_0) \in (W^{1,\infty}(0,1))^2$ is the pair of nonnegative initial data.
Local existence and uniqueness of classical positive solutions are known, see \cite{BBTW}.
Let us first recall the mass conservation law:
$$
\|u(t)\|_{L^1(0,1)} = \|u_0\|_{L^1(0,1)}=:M, \qquad t\in(0,T),
$$
and the classical Lyapunov functional associated with \eqref{P1}, see \cite{BBTW, Ci-Fu18, Fu};
the following identity holds:
\begin{eqnarray}\label{Lyap_KS}
\frac{d}{dt}\mathcal{L}(u, v)
+
\int_0^1 | v_t|^2dx + \int_0^1 S(u) \cdot \left|\frac{D (u)}{S(u)}\partial_xu -\partial_xv \right|^2dx=0,
\end{eqnarray}
where
\begin{eqnarray*}
\mathcal{L}(u, v) &:=& \int_0^1 G(u)dx - \int_0^1 uv dx+  \frac{1}{2} \|v\|^2_{H^1(0,1)},\\
G(s) &:=& \int_{1}^{s} \int_{1}^{\sigma} \frac{D (\tau)}{S(\tau)}d\tau d\sigma.
\end{eqnarray*}

\begin{rem}\label{pr_critical}
In the earlier literature (see the survey \cite[Section 3.6]{BBTW}),
$p-q = 1$ is identified to be the critical exponent.
However, the question whether the mass critical phenomenon appears or not in the full generality of $(p,q)$ is open.
The special case $(p,q)=(1,0)$ was studied and the absence of the mass critical phenomenon was proved in \cite{Ci-Fu18} by introducing a new functional, see Subsection \ref{4.5}.
Moreover,  the case $(p,q)=(2,1)$ will be considered in Subsection \ref{4.66} and the absence of the mass critical phenomenon is newly established in Theorem \ref{MT}. The cases of other $(p,q)$ are still open.
\end{rem}
\begin{rem}\label{poor}
In the higher dimensional case, the mass critical phenomenon was established by using the above Lyapunov functional $\mathcal{L}$, see~\cite[Section 3.6]{BBTW}.
On the other hand, the Lyapunov functional $\mathcal{L}$ gives very poor information in the one dimensional setting.
Actually, the one dimensional Sobolev inequality and the mass conservation law imply
$$
\int_0^1 uv dx\leq M\|v\|_{L^\infty(0,1)} \leq CM\|v\|_{H^1(0,1)}\leq \frac{1}{2} \|v\|^2_{H^1(0,1)} + \frac{C^2M^2}{2},
$$
and then the upper and lower bound for the Lyapunov functional are derived:
\begin{eqnarray}\label{Lyap_est_KS}
\sup_{t>0}|\mathcal{L}(u(t), v(t))| < \infty.
\end{eqnarray}
On the other hand, the condition $p-q = 1$ implies $\frac{D(u)}{S(u)} =\frac{1}{u(1+u)}$ and then $G(u) \simeq \log u $.
Hence, the bound of the Lyapunov functional $\mathcal{L}$ implies a priori estimate on $\|\log u\|_{L^1(0,1)}$, which is a weaker information than the one coming from the mass conservation law.
\end{rem}

\subsection{Alternative derivation of the functional in \cite{Ci-Fu18}}\label{4.5}

In this section we consider a special case \eqref{P1}, namely with $(p,q)=(1,0)$.
This case is already well-known, the global existence result was established in \cite{Ci-Fu18}.
The crucial new functional found there led to new estimate yielding global existence of solutions (their boundedness was established in \cite{BCFS}).
We remark that this functional in \cite{Ci-Fu18} corresponds to the Fisher information (entropy production) of the entropy identity \eqref{Lyap_KS}.
In this section we present a new proof of a crucial \cite[Lemma 3.1]{Ci-Fu18}.
Our proof is shorter and more direct, in particular the identity in \cite[Lemma 2.1]{Ci-Fu18} is now redundant.
It strongly relies on our approach to the nonlinear Fisher information in Section \ref{druga}.

Actually the following lemma could be stated for any $p\in \r$.
\begin{lem}\label{lem;iden}
Let $(u,v)$ solve \eqref{P1} with $S(u)=u$. Then,
\begin{align}
\label{eqn;KS-F-identity}
&\frac{d}{dt} \left( \frac12 \int_0^1 \frac{(D(u))^2}{u} |\partial_x u|^2dx\right)
+\int_0^1 uD(u) \left| \partial_x \left( \frac{D(u)}{u} \partial_x u\right) \right| ^2dx
\\
&=\int_0^1 uD(u) \partial_{xx} v \partial_x\left( \frac{D(u)}{u} \partial_x u\right)dx.
\notag
\end{align}
\end{lem}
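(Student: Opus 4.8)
The plan is to mimic the computation in the proof of Theorem \ref{thm;1D-FI}, tracking the extra contribution coming from the chemotactic term $-\partial_x(S(u)\partial_x v) = -\partial_x(u\partial_x v)$ in the $u$-equation. Set, as in Section \ref{druga} but now with $a(u) := D(u)$, the primitive $\Sigma(s) := \int_1^s D(\tau)/\sqrt{\tau}\,d\tau$, so that $\partial_x\Sigma(u) = (D(u)/\sqrt{u})\,\partial_x u$ and hence $\tfrac12\int_0^1 (D(u)^2/u)|\partial_x u|^2\,dx = \tfrac12\int_0^1 |\partial_x\Sigma(u)|^2\,dx$. The left-hand side of \eqref{eqn;KS-F-identity} is then $\tfrac{d}{dt}\bigl(\tfrac12\int_0^1|\partial_x\Sigma(u)|^2\,dx\bigr)$ plus the dissipation term, and the identity to be proved reads, after writing $D(u)/u\cdot\partial_x u = (1/\sqrt{u})\,\partial_x\Sigma(u)$,
\[
\frac12\frac{d}{dt}\int_0^1|\partial_x\Sigma(u)|^2\,dx
= -\int_0^1 uD(u)\left|\partial_x\Bigl(\tfrac{1}{\sqrt u}\partial_x\Sigma(u)\Bigr)\right|^2dx
+\int_0^1 uD(u)\,\partial_{xx}v\,\partial_x\Bigl(\tfrac{1}{\sqrt u}\partial_x\Sigma(u)\Bigr)dx.
\]

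First I would differentiate: $\tfrac12\tfrac{d}{dt}\int_0^1|\partial_x\Sigma(u)|^2\,dx = \int_0^1 \partial_x\Sigma(u)\,\partial_x\partial_t\Sigma(u)\,dx = \int_0^1 \partial_x\Sigma(u)\,\partial_x\bigl(\tfrac{D(u)}{\sqrt u}u_t\bigr)\,dx$, using $\partial_t\Sigma(u) = (D(u)/\sqrt u)u_t$. Now substitute $u_t = \partial_x(D(u)\partial_x u) - \partial_x(u\partial_x v)$. The first piece is exactly the quantity handled in the proof of Theorem \ref{thm;1D-FI}, and that computation yields precisely $-\int_0^1 uD(u)\bigl|\partial_x(\tfrac{1}{\sqrt u}\partial_x\Sigma(u))\bigr|^2\,dx$ — I would simply cite that calculation rather than redo the completion-of-square manipulation. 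It remains to deal with the new term $\int_0^1 \partial_x\Sigma(u)\,\partial_x\bigl(\tfrac{D(u)}{\sqrt u}\cdot(-\partial_x(u\partial_x v))\bigr)\,dx$. Here the key algebraic observation is the same one used in Section \ref{druga}: for any smooth $g$,
\[
\frac{D(u)}{\sqrt u}\,\partial_x\bigl(g\bigbr)
\quad\text{combined with the }\tfrac{1}{2\sqrt u}|\partial_x\Sigma(u)|^2\text{ correction}
\]
reorganizes into $D(u)\,\partial_x\bigl(\tfrac{1}{\sqrt u}g\bigr)$-type expressions; concretely, I expect $\tfrac{D(u)}{\sqrt u}\partial_x(u\partial_x v)$ to be rewritten so that after integrating by parts once, the new term becomes $\int_0^1 uD(u)\,\partial_{xx}v\,\partial_x\bigl(\tfrac{1}{\sqrt u}\partial_x\Sigma(u)\bigr)\,dx$, possibly after cancellations against lower-order pieces. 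All boundary terms vanish because $\partial_x u = \partial_x v = 0$ at $x=0,1$, hence $\partial_x\Sigma(u) = 0$ and $\partial_{x}v = 0$ there.

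The main obstacle is the bookkeeping in the new chemotactic term: expanding $\partial_x(u\partial_x v) = \partial_x u\,\partial_x v + u\,\partial_{xx}v$ and then pushing it through $\partial_x\bigl(\tfrac{D(u)}{\sqrt u}\cdot\bigr)$ produces several terms involving $\partial_x v$, $\partial_{xx}v$, $\partial_x u$, $\partial_{xx}u$ with various powers of $u$ and $D(u)$, and one must verify that exactly the right combination survives to match the clean right-hand side of \eqref{eqn;KS-F-identity}. I would organize this by expressing everything in terms of $w := \tfrac{1}{\sqrt u}\partial_x\Sigma(u) = \partial_x\Lambda(u)$ (so $\partial_x\Sigma(u) = \sqrt u\,w$) and $\partial_x v$, integrating by parts to move derivatives off $v$, and checking that the $\partial_x v$-only terms (those not involving $\partial_{xx}v$) cancel — this cancellation is what makes the statement hold for every $p\in\r$, and confirming it cleanly is the crux. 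The identity $\nabla F(u) = D(u)\nabla u = u\nabla\Lambda(u)$ with $F(s) = \int_0^s D(\tau)\,d\tau$, used repeatedly in Theorem \ref{thm;FI}, should again be the right device for these rearrangements.
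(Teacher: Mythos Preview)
Your plan is essentially the paper's own proof: define $\Sigma$ with $a=D$, split $u_t$ into its diffusion and drift parts, quote the computation of Theorem~\ref{thm;1D-FI} verbatim for the diffusion contribution, and then verify that the drift contribution reduces to the single term $\int_0^1 uD(u)\,\partial_{xx}v\,\partial_x\bigl(\tfrac{D(u)}{u}\partial_x u\bigr)\,dx$. The paper organizes that last step by first integrating by parts once to reach $\int_0^1 \tfrac{D(u)}{\sqrt u}\,\partial_{xx}\Sigma(u)\,\partial_x(u\partial_x v)\,dx$, then expanding both factors and observing that the three $\partial_x v$-terms cancel via one more integration by parts on $\int_0^1 D(u)\partial_x u\,\partial_x v\,\partial_x\bigl(\tfrac{D(u)}{u}\partial_x u\bigr)\,dx$---precisely the cancellation you anticipate.
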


\begin{rem}
Knowing Lemma \ref{lem;iden}, applying then \cite[Lemma 3.2 and Lemma 3.3]{Ci-Fu18}, we obtain

\begin{align*}
\frac{d}{dt} \mathcal{F}(u(t)) + \mathcal{G}(u(t),v(t)) =\,\int_0^1 \frac{u D(u) (v+v_t)^2}{4}dx,
\end{align*}
where
\begin{align*}
\mathcal{F}(u(t)) :=\,\frac12 \int_0^1 \frac{(D(u))^2}{u} |\partial_x u |^2dx - \int_{0}^1 u \int_1^u D(s) ds dx,
\end{align*}
and
\begin{align*}
\mathcal{G}(u(t),v(t)) :=\, \int_0^1 uD(u) \left|  \partial_x\left( \frac{D(u)}{u} \partial_xu \right) -\partial_{xx}v +\frac{v+v_t}{2}  \right|^2dx,
\end{align*}
which is a crucial identity to arrive at a global solution to \eqref{P1} with $(p,q)=(1,0)$.
\end{rem}

\begin{pr}{Lemma \ref{lem;iden}}
Calculations in Theorem \ref{thm;1D-FI} are very useful here.
Not only the formulation of Theorem \ref{thm;1D-FI}, which could be taken from the 1D version of Theorem \ref{thm;FI}.
Defining
\begin{align*}
\Sigma(u) = \int_1^u \frac{D(\tau)}{\tau}d\tau
\end{align*}
like in \eqref{eqn;plimitive}, we notice that \eqref{eqn;KS-F-identity} reads as
\begin{align*}\eqntag
\label{eqn;FI-KS}
&\frac12 \frac{d}{dt} \int_0^1 |\partial_x\Sigma(u)|^2 dx
\\
=\,& - \int_0^1 uD(u) \left| \partial_x \left(\frac{1}{\sqrt{u}} \partial_x \Sigma(u) \right)\right|^2 dx+ \int_0^1 uD(u) \partial_{xx} v \partial_x\left(\frac{D(u)}{u} \partial_x u\right)dx.
\end{align*}
Let us multiply the upper equation in \eqref{P1} with $S(u)=u$ by $-\frac{D(u)}{\sqrt{u}} \partial_{xx}\Sigma(u)$
to see that the proof of \eqref{eqn;KS-F-identity} reduces to showing that
\begin{align}
\int_0^1  \partial_x(u\partial_xv) \frac{D(u)}{\sqrt{u}} \partial_{xx} \Sigma(u)dx
=\int_0^1 \partial_x\left( \frac{D(u)}{u} \partial_xu\right) uD(u) \partial_{xx}vdx.
\label{eqn;KS-non}
\end{align}
Indeed, the first term on right hand side of \eqref{eqn;FI-KS} is obtained exactly same way as in Theorem \ref{thm;1D-FI}.
 In order to prove \eqref{eqn;KS-non}, we calculate by the definition of $\Sigma$ that
\begin{align}
&\int_0^1  \partial_x(u\partial_xv) \frac{D(u)}{\sqrt{u}}  \partial_{xx}\Sigma(u)dx\notag
\\
=\,& \int_0^1 \partial_x(u \partial_xv) D(u) \left[ \frac{1}{\sqrt{u}} \partial_x\left(  \frac{D(u)}{\sqrt{u}}\partial_xu\right)  \right]dx\notag
\\
=\,&\int_0^1 \partial_x(u\partial_xv) D(u)  \left[\partial_x \left(\frac{D(u)}{u}\partial_xu\right) + \frac{D(u)(\partial_xu)^2}{2 u^2} \right]dx \notag
\\
=\,&\int_0^1 \left( \partial_xu \partial_xv D(u) + uD(u) \partial_{xx} v \right)\left[ \partial_x\left(\frac{D(u)}{u}\partial_xu\right) + \frac{D(u)(\partial_xu)^2}{2 u^2} \right]dx\notag
\\
=\,&\int_0^1\partial_x \left( \frac{D(u)}{u} \partial_xu\right) uD(u) \partial_{xx}vdx
+\int_0^1 \partial_x\left(\frac{D(u)}{u}\partial_xu\right) D(u) \partial_xu\partial_x vdx \notag
\\
&+\frac12 \int_0^1 \frac{(\partial_xu)^2 D(u)^2}{u^2}\partial_xu \partial_xvdx + \frac12 \int _0^1\frac{(\partial_xu)^2 D(u)^2}{u} \partial_{xx}vdx.
\label{eqn:ltFI}
\end{align}
Here, the integration by parts implies
\begin{align*}
&\int_0^1 \partial_x\left(\frac{D(u)}{u}\partial_xu\right) D(u)\partial_x u\partial_x v dx
\\
=\,& \frac12 \int_0^1 \partial_x\left[ \left(\frac{D(u)}{u} \partial_xu\right)^2  ~\right] u\partial_x vdx
\\
=\,&-\frac12 \int_0^1 \frac{(\partial_xu)^2 D(u)^2}{u^2}\partial_xu \partial_xvdx
 - \frac12 \int_0^1 \frac{(\partial_xu)^2 D(u)^2}{u} \partial_{xx}vdx.
\end{align*}
Plugging the above in \eqref{eqn:ltFI}, we arrive at \eqref{eqn;KS-non}.
The proof is finished.
\end{pr}
\vspace{2mm}

\subsection{Global solutions in the 1D fully parabolic Keller--Segel system with nonlinear diffusion and sensitivity}\label{4.66}

In the present section, we apply our approach to obtain a straightforward proof of the identity
in~\cite[Proposition 2.4]{Fu}.
Next. we utilize the latter and obtain the global existence of a unique solution to \eqref{P1} for the choice $(p,q)=(2,1)$,
which leads to the critical exponent.
Such a result has not been known so far, see Remark \ref{pr_critical}.

Our main theorem in this section reads.
\begin{theorem}\label{MT}
Assume that $(p,q) = (2,1)$.
Then the problem \eqref{P1} has a unique classical positive solution, which exists globally in time.
\end{theorem}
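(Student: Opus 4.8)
The plan is to establish an a priori bound on a Fisher-information-type functional adapted to the case $(p,q)=(2,1)$, then upgrade this to an $L^\infty$ bound on $u$ via the standard parabolic bootstrap, and finally invoke the local existence/uniqueness theory together with an extensibility criterion to conclude global existence. The starting point is the analogue of Lemma \ref{lem;iden} for general $S$; here $D(u)=(1+u)^{-2}$ and $S(u)=u(1+u)^{-1}$. Following the computation scheme of Theorem \ref{thm;1D-FI} and Lemma \ref{lem;iden} — multiply the first equation of \eqref{P1} by a suitable weighted second derivative of a primitive functional $\Sigma$ (with $\Sigma'(s)=D(s)/(S(s)/s)\cdot s^{-1/2}$ chosen so that the diffusion part reproduces a clean dissipation term) — one obtains an identity of the form
\begin{align*}
\frac{d}{dt}\,\mathcal{F}(u(t)) + \mathcal{D}(u(t),v(t)) = \text{(cross term involving $v$, $v_t$)},
\end{align*}
where $\mathcal{D}$ is the nonlinear Fisher dissipation $\int_0^1 uD(u)|\partial_x(u^{-1/2}\partial_x\Sigma(u))|^2\,dx$ plus lower-order pieces. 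This is exactly the identity of \cite[Proposition 2.4]{Fu}, which the paper says it will reprove by this route; I would reproduce that derivation, taking care that with $(p,q)=(2,1)$ the various exponents line up so that $\mathcal F$ is (up to controllable terms) coercive in the right norm.

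Next I would close the estimate. The cross term on the right-hand side — schematically $\int_0^1 (\text{weight})\,\partial_{xx}v\,\partial_x(\tfrac{D(u)}{u}\partial_x u)\,dx$ and terms with $v+v_t$ — must be absorbed into $\mathcal{D}$ plus quantities already controlled by the Lyapunov functional \eqref{Lyap_KS} and the mass conservation law. For the $\partial_{xx}v$ factor one uses the second equation $v_t=\partial_{xx}v-v+u$ to trade $\partial_{xx}v$ for $v_t+v-u$; the $v_t$ part is controlled by $\int_0^1|v_t|^2$ from \eqref{Lyap_KS}, the $v$ part by the $H^1$ bound \eqref{Lyap_est_KS}, and the $u$ part feeds back into the functional. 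The functional inequality \eqref{eqn;Fisher-ineq} of Theorem \ref{thm;Fisher-ineq} (valid since $D(u)=(1+u)^{-2}$ is not bounded below — so one may instead need the Bernis-type bound \eqref{eqn;Beris}, which requires no lower bound on $a$) is the tool that lets the "bad" gradient terms be dominated by $\mathcal{D}$. After Cauchy–Schwarz, Young, and the one-dimensional Gagliardo–Nirenberg/Sobolev inequalities, and a Gronwall argument, one arrives at $\sup_{t<T}\mathcal{F}(u(t))<\infty$, hence an a priori bound on $\|\partial_x\Sigma(u)\|_{L^2}$, which in 1D gives an $L^\infty$ bound on $\Sigma(u)$ and thus on $u$ (using that $\Sigma$ is increasing and its behaviour at infinity matches $(p,q)=(2,1)$).

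Finally, with $\|u(t)\|_{L^\infty(0,1)}$ bounded uniformly on $[0,T)$, standard parabolic regularity applied to the $v$-equation gives bounds on $\|v(t)\|_{W^{1,\infty}}$, and then Schauder/semigroup estimates for the (now uniformly parabolic, with smooth coefficients on the relevant range of $u$) $u$-equation yield the higher regularity needed to apply the blow-up/extensibility criterion from the local theory in \cite{BBTW}. Since no norm blows up as $t\uparrow T$, the solution extends past $T$, and being arbitrary, the solution is global; uniqueness is inherited from the local theory. The main obstacle I anticipate is the second step: verifying that for the specific exponents $(p,q)=(2,1)$ all the weights in the cross term and in the lower-order parts of $\mathcal{F}$ and $\mathcal{D}$ have matching homogeneity, so that the bad terms genuinely absorb — this is where the criticality of $p-q=1$ is felt, and where the choice $(p,q)=(2,1)$ rather than $(1,0)$ may force a different balancing (possibly using \eqref{eqn;Beris} in place of \eqref{eqn;Fisher-ineq}, and possibly needing an extra integration by parts to relocate a derivative). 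Everything else is routine 1D parabolic bootstrapping.
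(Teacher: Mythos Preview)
Your overall architecture---derive the entropy-production identity of \cite[Proposition~2.4]{Fu} via the Fisher-information computation, bound $\mathcal{F}$, then bootstrap---matches the paper. But there is a genuine gap at the step ``bound on $\|\partial_x\Sigma(u)\|_{L^2}$, which in 1D gives an $L^\infty$ bound on $\Sigma(u)$ and thus on $u$.'' For $(p,q)=(2,1)$ one has $D(s)=(1+s)^{-2}$, $S(s)=s(1+s)^{-1}$, so $\Sigma'(s)=D(s)/\sqrt{S(s)}=s^{-1/2}(1+s)^{-3/2}\sim s^{-2}$ at infinity; hence $\Sigma$ is \emph{bounded} on $[0,\infty)$, and an $L^\infty$ bound on $\Sigma(u)$ yields no information whatsoever on $u$. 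Equivalently, the Fisher estimate only controls $\int_0^1 |\partial_x u|^2/\bigl(u(1+u)^{3}\bigr)\,dx$, which by itself is too degenerate to bound $\|u\|_{L^\infty}$. The paper closes this gap by proving, \emph{separately}, an $L^2$ estimate $\int_0^1 u^2\,dx\le C(T)$ (multiply the $u$-equation by $2u$ and use $\partial_{xx}v=v_t+v-u$), and then combining the two pieces via
\[
\|\log(1+u)\|_{L^\infty}\le C(M)+\Bigl(\int_0^1(1+u)^2\,dx\Bigr)^{1/2}\Bigl(\int_0^1\frac{|\partial_x u|^2}{(1+u)^{4}}\,dx\Bigr)^{1/2},
\]
from which the $L^\infty$ bound on $u$ follows.

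A second, less fatal, issue concerns the handling of the right-hand side of \eqref{entro_prod}. You propose to absorb bad terms into the dissipation $\mathcal{D}$ via the functional inequalities \eqref{eqn;Beris} or \eqref{eqn;Fisher-ineq}. The paper does not use $\mathcal{D}$ for absorption at all; it simply drops $\mathcal{D}\ge 0$. The extra $S''$ term, which is the real novelty compared to the $(1,0)$ case, is split as $I_2+I_3$ with $I_2=\int_0^1\tfrac{D^3 S''}{2S^2}|\partial_x u|^4\,dx$ and $I_3=-\int_0^1\tfrac{D^2 S''}{2S}|\partial_x u|^2\partial_x u\,\partial_x v\,dx$. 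The crucial observation is the sign $S''(u)\le 0$ for $q\in[0,1]$, so $I_2\le 0$; then Young's inequality puts the $(\partial_x u)^4$ piece of $|I_3|$ back into $-I_2$, leaving $\int_0^1\tfrac{S^2|S''|}{D}|\partial_x v|^4\,dx$, whose weight grows like $(1+u)^{2(1-q)}$---sublinear precisely when $q>\tfrac12$, hence controllable by mass conservation and the bound on $\|\partial_x v\|_{L^r}$. Your plan does not identify either the sign mechanism or the role of the threshold $q>\tfrac12$; without these, the balancing you anticipate in your final paragraph will not close.
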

The following proposition is crucial in the proof of Theorem \ref{MT}.
\begin{prop}\label{prop_regularity_ep}
Assume that $p-q = 1$ and $q\in \left(\frac{1}{2}, 1\right]$.
Let $(u,v)$ be the classical positive solution of \eqref{P1} in $(0,T)\times (0,1)$.
Then, there exists some $C=C(T)>0$ such that for all $t \in (0,T)$,
$$
\int_0^1 \frac{|\partial_xu|^2}{u(1+u)^{p+1}}dx \leq C.
$$
\end{prop}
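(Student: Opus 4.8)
The plan is to differentiate the functional $Y(t):=\int_0^1 \frac{|\partial_x u|^2}{u(1+u)^{p+1}}\,dx$ along the flow, using the identity of \cite[Proposition~2.4]{Fu} (which the method of Section~\ref{druga} reproves directly), and then to bound the resulting right-hand side by quantities that are already known to be integrable in time. First I would record the algebra forced by $p-q=1$, i.e.\ $q=p-1$:
\begin{align*}
\frac{D(u)}{S(u)}=\frac{1}{u(1+u)},\qquad \frac{D(u)^2}{S(u)}=\frac{1}{u(1+u)^{p+1}},\qquad S(u)D(u)=u(1+u)^{-2q-1}\le 1 .
\end{align*}
Introduce $\Phi(s):=\int_1^s\frac{D(\tau)}{S(\tau)}\,d\tau$ and $\psi(s):=\int_1^s\frac{D(\tau)}{\sqrt{S(\tau)}}\,d\tau$, so that $\partial_x\Phi(u)=\frac{\partial_x u}{u(1+u)}$ (which vanishes at $x=0,1$ by the Neumann condition) and $Y(t)=\int_0^1|\partial_x\psi(u)|^2\,dx$. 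Testing the first equation in \eqref{P1} against $-\frac{D(u)}{\sqrt{S(u)}}\,\partial_{xx}\psi(u)$ and integrating by parts exactly as in the proof of Lemma~\ref{lem;iden}, but carrying $S(u)$ wherever that proof carries $u$, yields
\[
\frac12\,\frac{d}{dt}Y(t)+\int_0^1 S(u)D(u)\,\bigl|\partial_{xx}\Phi(u)\bigr|^2\,dx=\int_0^1 S(u)D(u)\,\partial_{xx}v\,\partial_{xx}\Phi(u)\,dx ,
\]
which is the identity of \cite[Proposition~2.4]{Fu}; write $\mathcal D(u)\ge 0$ for the left-hand dissipation integral and $I(t)$ for the right-hand side.

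Next I would estimate $I(t)$. Using the second equation in \eqref{P1} to substitute $\partial_{xx}v=v_t+v-u$ and then applying the Cauchy--Schwarz and Young inequalities,
\begin{align*}
|I(t)|&\le \mathcal D(u)^{1/2}\Bigl(\int_0^1 S(u)D(u)\,(v_t+v-u)^2\,dx\Bigr)^{1/2}\\
&\le \tfrac12\mathcal D(u)+\tfrac32\int_0^1 S(u)D(u)\bigl(v_t^2+v^2+u^2\bigr)\,dx .
\end{align*}
Since $0\le S(u)D(u)\le 1$, the $v_t^2$ and $v^2$ contributions are at most $\|v_t(t)\|_{L^2(0,1)}^2+\|v(t)\|_{L^2(0,1)}^2$; and the remaining term is $\int_0^1 S(u)D(u)\,u^2\,dx=\int_0^1 u^3(1+u)^{-2q-1}\,dx$. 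Here the hypothesis $q>\tfrac12$ is decisive: it gives $2q+1>2$, hence $u^3(1+u)^{-2q-1}\le u^3(1+u)^{-2}\le u$ pointwise (because $u^3\le u(1+u)^2$), so that $\int_0^1 S(u)D(u)\,u^2\,dx\le \int_0^1 u\,dx=\|u_0\|_{L^1(0,1)}=M$. (For $q\le\tfrac12$ this integral need not be finite, which is why the admissible range of $q$ matters.)

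Collecting the estimates, $\tfrac12 Y'(t)+\mathcal D(u)\le \tfrac12\mathcal D(u)+\tfrac32\|v_t(t)\|_{L^2(0,1)}^2+\tfrac32\|v(t)\|_{L^2(0,1)}^2+\tfrac32 M$, hence
\[
Y'(t)\le 3\|v_t(t)\|_{L^2(0,1)}^2+3\|v(t)\|_{L^2(0,1)}^2+3M .
\]
Now $\int_0^T\|v_t(t)\|_{L^2(0,1)}^2\,dt<\infty$ by integrating the Lyapunov identity \eqref{Lyap_KS} over $(0,T)$ and invoking the bound \eqref{Lyap_est_KS} of Remark~\ref{poor}; and $\sup_{t\in(0,T)}\|v(t)\|_{L^2(0,1)}<\infty$ by a standard energy estimate for the second equation in \eqref{P1} together with the mass conservation law. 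Integrating the last displayed differential inequality in time therefore gives $Y(t)\le C(T)$ for all $t\in(0,T)$, which is the claim.

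The two steps that require care are the following. First, the identity of \cite[Proposition~2.4]{Fu}: this is the computational core, and deriving it via our Section~\ref{druga} method amounts to repeating the bookkeeping in the proof of Lemma~\ref{lem;iden} with $S(u)$ in place of $u$, where one must check that every boundary term vanishes under the Neumann conditions and that the completion of the square still closes. Second, the control of $S(u)D(u)u^2=u^3(1+u)^{-2q-1}$ by the conserved mass: this is exactly where $q>\tfrac12$ enters, and one should verify carefully the identities $S(u)D(u)=u(1+u)^{-2q-1}$ and $u^3\le u(1+u)^2$ on which it rests. Everything else --- the Cauchy--Schwarz/Young steps and the final time integration --- is routine.
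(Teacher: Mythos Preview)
Your displayed identity
\[
\frac12\,Y'(t)+\int_0^1 S(u)D(u)\,\bigl|\partial_{xx}\Phi(u)\bigr|^2\,dx=\int_0^1 S(u)D(u)\,\partial_{xx}v\,\partial_{xx}\Phi(u)\,dx
\]
is \emph{not} what ``carrying $S(u)$ wherever Lemma~\ref{lem;iden} carries $u$'' produces, and it is not the identity of \cite[Proposition~2.4]{Fu}. The proof of Lemma~\ref{lem;iden} silently uses that $S(u)=u$ satisfies $S''\equiv 0$: when you redo the computation with a general $S$, the completion of the square in the diffusion part leaves the residual $\tfrac12\int_0^1 \frac{S''(u)}{D(u)}|\partial_x\psi(u)|^4\,dx$, and the chemotaxis part produces an analogous extra term with $\partial_x v$. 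The correct identity (this is \eqref{eqn;identity-NSD} in the paper, equivalently \eqref{entro_prod}) carries the additional right-hand side
\[
\int_0^1\Bigl(\frac{D(u)}{S(u)}\partial_x u-\partial_x v\Bigr)\frac{D(u)^2 S''(u)}{2S(u)}\,|\partial_x u|^2\,\partial_x u\,dx,
\]
which for $q>0$ is a genuine $|\partial_x u|^4$--type contribution that your Cauchy--Schwarz/Young estimate on $I(t)$ does not touch. This is exactly the obstruction flagged in Remark~\ref{poor} and after \eqref{entro_prod}: it is the whole difficulty separating the nonlinear-sensitivity case from \cite{Ci-Fu18}.

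The paper deals with this extra term by splitting it as $I_2+I_3$ with $I_2=\int_0^1\frac{(D(u))^3 S''(u)}{2(S(u))^2}|\partial_x u|^4\,dx$ and $I_3=-\int_0^1\frac{(D(u))^2 S''(u)}{2S(u)}|\partial_x u|^2\partial_x u\,\partial_x v\,dx$. Lemma~\ref{lem_S} gives $S''\le 0$ for $q\in[0,1]$, so $I_2\le 0$ is a \emph{good} term; Young's inequality then absorbs $|I_3|$ into $-I_2$ plus $\int_0^1\frac{(S(u))^2|S''(u)|}{D(u)}|\partial_x v|^4\,dx$, and it is \emph{here} that $q>\tfrac12$ enters: it forces $\frac{(S(u))^2|S''(u)|}{D(u)}\le C(1+u)^{2(1-q)}$ with exponent $<1$, so this integral is bounded via mass conservation and Lemma~\ref{reg_v}. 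Your proposed mechanism for using $q>\tfrac12$ (the bound $S(u)D(u)u^2\le u$) is correct as far as it goes, but it is not where the restriction actually bites. Finally, note that the paper works with $\mathcal F(u)=\tfrac12 Y(t)-\int_0^1\Psi(u)\,dx$ rather than $Y$ alone, and needs a separate lemma (Lemma~\ref{lem_est_ep2}) to bound $\int_0^1\Psi(u)\,dx$ by mass; your simpler route would avoid this, but only if the identity you wrote were true.
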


We recall the following entropy production identity in \cite[Proposition 2.4]{Fu}.
\begin{prop}\label{prop_new_lyapunov}
Let $(u,v)$ be a solution of \eqref{P1} in $(0,T)\times (0,1)$.
The following identity is satisfied:
\begin{align}\label{entro_prod}
\nonumber
&\frac{d}{dt} \mathcal{F}(u(t)) + \mathcal{D}(u(t),v(t))
\\
=\,&\int_0^1 \frac{S(u)D(u)(v+ v_t)^2}{4}dx\nonumber
\\&+ \int_0^1 \left(\frac{D(u)}{S(u)}\partial_xu_x - \partial_xv\right) \cdot
\frac{(D(u))^2 S''(u)}{2S(u)} |\partial_xu|^2\partial_xudx,
\end{align}
where
\begin{eqnarray*}
\mathcal{F}(u(t))
&:=&\frac{1}{2} \int_0^1 \frac{(D(u))^2}{S(u)}|\partial_xu|^2dx - \int_0^1 \Psi(u)dx, \\[2mm]
\Psi(\phi) &:=& \int_1^\phi \left( \int_1^r \frac{\tau D(\tau) S'(\tau)}{S(\tau)}d\tau  + r D(r) \right)dr,\\[2mm]
\mathcal{D}(u(t),v(t))
&:=& \int_0^1 S(u) D(u) \left|\partial_x \left(\frac{D(u)}{S(u)}\partial_xu \right) -\partial_{xx}v  + \frac{(v+v_t)}{2} \right|^2dx.
\end{eqnarray*}
\end{prop}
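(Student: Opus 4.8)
The plan is to follow the template of the proof of Lemma~\ref{lem;iden}, which is precisely the special case $S(u)=u$ (so $S''\equiv0$), adapting the nonlinear-Fisher-information calculation of Theorem~\ref{thm;1D-FI} to a genuinely nonlinear sensitivity. Write $g(s):=D(s)/S(s)$ and $\Sigma(s):=\int_1^s D(\tau)/\sqrt{S(\tau)}\,d\tau$. Then $\partial_x\Sigma(u)=\frac{D(u)}{\sqrt{S(u)}}\partial_xu$, so that $\frac12\int_0^1\frac{(D(u))^2}{S(u)}|\partial_xu|^2\,dx=\frac12\int_0^1|\partial_x\Sigma(u)|^2\,dx$, and $\frac{1}{\sqrt{S(u)}}\partial_x\Sigma(u)=g(u)\partial_xu$, so that $\partial_x\big(g(u)\partial_xu\big)=\partial_x\big(\tfrac{1}{\sqrt{S(u)}}\partial_x\Sigma(u)\big)$ is exactly the second-order quantity occurring in $\mathcal D$. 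Moreover $u_t=\partial_x\big(S(u)\partial_x\Lambda(u)\big)-\partial_x\big(S(u)\partial_xv\big)$ with $\Lambda(s)=\int_1^s g(\tau)\,d\tau$, so the diffusion part has exactly the structure of Section~\ref{druga} with the diffusion coefficient given by $D$ and the density $u$ replaced, in the appropriate places, by the weight $S(u)$.

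First I would differentiate $\mathcal F(u(t))$ directly. Using $\partial_xu=0=\partial_x\partial_tu$ on $\{0,1\}$, integration by parts turns $\frac{d}{dt}\mathcal F(u(t))$ into $\int_0^1 T(u)\,\partial_tu\,dx$, where $T(u)=-g(u)S(u)\,\partial_x\big(g(u)\partial_xu\big)$ plus explicit lower-order terms carrying a factor $S'(u)$ or equal to $-\Psi'(u)$; the particular $\Psi$ of \cite{Fu} is chosen precisely so that these lower-order terms reorganize correctly, the relevant pointwise identity being the $S$-weighted analogue of $\tfrac{a(u)}{\sqrt u}[a'(u)(\partial_xu)^2+a(u)\partial_{xx}u]=a(u)\partial_{xx}\Sigma(u)+\tfrac{1}{2\sqrt u}|\partial_x\Sigma(u)|^2$ from the proof of Theorem~\ref{thm;1D-FI}, in which the weight $\tfrac{1}{2\sqrt u}$ is replaced by $\tfrac{S'(u)}{2\sqrt{S(u)}}$. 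I would then substitute $\partial_tu=\partial_x(D(u)\partial_xu)-\partial_x(S(u)\partial_xv)$ and split into a pure-diffusion contribution and a chemotactic contribution.

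For the diffusion contribution I would rerun the computation behind \eqref{eqn;1D-Fisher} with the above $\Sigma$, $\Lambda$ and weight adjustment: one integration by parts together with the one-dimensional Bochner identity produces $-\int_0^1 S(u)D(u)\big|\partial_x(g(u)\partial_xu)\big|^2\,dx$ plus a remainder that, regrouped exactly as in \eqref{eqn:ltFI} and the integration by parts following it, is absorbed by the $\Psi'(u)$- and $S'(u)$-terms identified above. For the chemotactic contribution $-\int_0^1 T(u)\,\partial_x(S(u)\partial_xv)\,dx$ I would integrate by parts so that $\partial_{xx}v$ and $\partial_xv$ appear, and repeat the bookkeeping of the proof of \eqref{eqn;KS-non}: the $\partial_{xx}v$-part yields the cross term $-2\int_0^1 S(u)D(u)\,\partial_x(g(u)\partial_xu)\,\partial_{xx}v\,dx$ of $\mathcal D$, while the terms involving $\partial_xv$ and $|\partial_xu|^2$ collapse, after one further integration by parts, into the single remainder $\int_0^1\big(g(u)\partial_xu-\partial_xv\big)\tfrac{(D(u))^2S''(u)}{2S(u)}|\partial_xu|^2\partial_xu\,dx$. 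Finally, using $\partial_{xx}v=v_t+v-u$ from the second equation of \eqref{P1}, I would complete the square, turning $-\int_0^1 S(u)D(u)|\partial_x(g(u)\partial_xu)|^2\,dx-2\int_0^1 S(u)D(u)\partial_x(g(u)\partial_xu)\,\partial_{xx}v\,dx$ together with the remaining terms in $u,v,v_t$ into $-\mathcal D(u(t),v(t))+\int_0^1\tfrac{S(u)D(u)(v+v_t)^2}{4}\,dx$, which is the asserted identity.

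The main obstacle I anticipate is the term-by-term bookkeeping in the chemotactic part: many integrals involving $\partial_xv$, $\partial_{xx}v$, $\partial_xu$ and $\partial_{xx}u$ are generated both by differentiating $\mathcal F$ and by integrating the test function against the chemotactic flux, and one must check that each of them ends up inside $-\mathcal D$, inside $\tfrac14\int_0^1 S(u)D(u)(v+v_t)^2\,dx$, or inside the stated $S''$-remainder, with nothing left over. This is the exact analogue of the delicate cancellation carried out after \eqref{eqn:ltFI} in the proof of Lemma~\ref{lem;iden}, now complicated by the $S'$- and $S''$-terms coming from the nonlinear sensitivity; once $S(u)=u$ is imposed all of these vanish and one recovers Lemma~\ref{lem;iden} together with the Remark following it.
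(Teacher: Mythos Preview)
Your plan is essentially the paper's own: define $\Sigma(s)=\int_1^s D/\sqrt{S}$, multiply the equation by $-\tfrac{D(u)}{\sqrt{S(u)}}\partial_{xx}\Sigma(u)$, split into a diffusion piece $I$ and a chemotactic piece $II$, treat $I$ by the Theorem~\ref{thm;1D-FI} calculation with weight $S(u)$ in place of $u$, and treat $II$ by the bookkeeping behind \eqref{eqn;KS-non}; finally complete the square using $\partial_{xx}v=v_t+v-u$.

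One point of your outline is misaligned with what actually happens, and it is worth correcting before you start the computation. The diffusion piece does \emph{not} produce only $-\int_0^1 S(u)D(u)\,|\partial_x(g(u)\partial_xu)|^2\,dx$ plus terms absorbed by~$\Psi'$. Running the Theorem~\ref{thm;1D-FI} argument with the weight $S(u)$ leaves precisely
\[
I=-\int_0^1 S(u)D(u)\left|\partial_x\!\left(\frac{D(u)}{S(u)}\partial_xu\right)\right|^2dx
+\frac12\int_0^1 \frac{S''(u)}{D(u)}\,|\partial_x\Sigma(u)|^4\,dx,
\]
and that last integral is exactly the $\tfrac{D(u)}{S(u)}\partial_xu$ half of the claimed $S''$-remainder. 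The chemotactic piece $II$ supplies only the $-\partial_xv$ half (this is the paper's \eqref{eqn;II}); so the combined $S''$-term is assembled from \emph{both} contributions, not from the chemotactic side alone. Correspondingly, $\Psi$ plays no role in cancelling diffusion leftovers: its purpose is entirely in the completing-the-square step, where $-\tfrac{d}{dt}\int_0^1\Psi(u)\,dx$ supplies the cross terms between $\partial_x(g(u)\partial_xu)-\partial_{xx}v$ and $\tfrac{v+v_t}{2}$, together with the $-u$ produced by $\partial_{xx}v=v_t+v-u$. The paper itself only proves the intermediate identity \eqref{eqn;identity-NSD} and then refers to \cite[Lemma~2.2]{Fu} for this last step, so if you intend to carry it out in full you will need to verify that the specific $\Psi$ given here does exactly that job.
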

%
%

\begin{rem}
The second term of right hand side in \eqref{entro_prod} disturbs us to apply similar estimates to the ones in \cite{Ci-Fu18, BCFS}.
\end{rem}

Let us first apply our approach to give a simpler and more direct proof of~\eqref{entro_prod}. It is again based on the calculations in the proof of Theorem \ref{thm;1D-FI}.
\vspace{2mm}

\begin{pr}{Proposition \ref{prop_new_lyapunov}}
The proof of Proposition \ref{prop_new_lyapunov} reduces to proving the following identity
\begin{align}
\label{eqn;identity-NSD}
&\frac{d}{dt}\left(\frac12 \int_0^1 \frac{D(u)^2}{S(u)}|\partial_xu|^2 dx\right)+\int_0^1 D(u)S(u) \left| \partial_x \left(\frac{D(u)}{S(u)}\partial_x u\right) \right|^2dx
\\
=\,&\int_0^1 D(u) S(u) \partial_{xx}v \partial_x \left(\frac{D(u)}{S(u) }\partial_xu \right)dx\notag
\\
&+\int_0^1 \left( \frac{D(u)}{S(u)} \partial_x u - \partial_x v  \right) \frac{D(u)^2S^{''}(u)}{2S(u)} |\partial_xu|^2 \partial_xudx.\notag
\end{align}
Indeed, knowing \eqref{eqn;identity-NSD}, we are in a position of \cite[Lemma 2.2]{Fu}.
Then, we repeat the argument in \cite{Fu}. In order to prove \eqref{eqn;identity-NSD}, we rewrite the upper equation \eqref{P1} as follows:
\begin{equation}
 u_t = \partial_x \left( D(u) \partial_x u - S(u) \partial_x v \right)
=\dx \left(S(u) \cdot \dx \left( \int_1^u \frac{D(\tau)}{S(\tau)}\,d\tau - v\right)\right).
\label{P2}
\end{equation}
In what follows we will denote
\begin{equation*}
\Sigma(s):=\int_1^s \frac{D(\tau)}{\sqrt{S(\tau)}}d\tau.
\end{equation*}
Multiplying \eqref{P2} by $-\frac{D(u)}{\sqrt{S(u)}} \partial_{xx}\Sigma(u)$, and integrating by parts, we obtain
\begin{align*}
\frac12 \frac{d}{dt} \int_0^1 |\partial_x\Sigma(u)|^2dx
=\,& \int_0^1 \partial_x \Sigma (u)  \partial_x\partial_t \Sigma (u) dx\\
=\,& -\int_0^1 \partial_{xx}\Sigma (u)  \partial_t \Sigma (u) dx\\
=\,& - \int_{0}^1 u_t \frac{D(u)}{\sqrt{S(u)}}\partial_{xx} \Sigma(u)dx\\
=\,&- \int_{0}^1\partial_x (D(u)\partial_xu) \frac{D(u)}{\sqrt{S(u)}}\partial_{xx} \Sigma(u)dx
\\
&+\int_0^1 \partial_x(S(u)\partial_xv) \frac{D(u)}{\sqrt{S(u)}} \partial_{xx}\Sigma(u)dx
=:\,I+II.
\eqntag\label{eqn;I-II}
\end{align*}
Let us first proceed with $II$.
By the similar way as in \eqref{eqn:ltFI}, we have
\begin{align*}
II=\,&\int_0^1 \left( S(u)\partial_{xx}v + S'(u) \partial_xu\partial_x v \right) \frac{D(u)}{\sqrt{S(u)}} \partial_x\left(\frac{D(u)}{\sqrt{S(u)}} \partial_xu\right)dx
\\
=\,&\int_0^1\left( S(u)\partial_{xx}v+ S'(u) \partial_xu\partial_x v \right) D(u) \left[  \frac{1}{\sqrt{S(u)}    }  \partial_x\left(\frac{D(u)}{\sqrt{S(u)}} \partial_xu\right) \right]dx
\\
=\,&\int_0^1 \left( S(u)\partial_{xx}v + S'(u) \partial_xu\partial_x v \right) D(u) \partial_x\left(  \frac{D(u)}{S(u)} \partial_xu\right)dx
\\& + \int_0^1 \left( S(u)\partial_{xx}v + S'(u) \partial_xu\partial_x v \right) D(u) \frac{S'(u) (\partial_xu)^2 D(u)}{2S^2(u)} dx
\\
=\,&
\int_0^1 D(u)S(u) \partial_{xx}v \partial_x\left(  \frac{D(u)}{S(u)} \partial_xu \right)dx
\\&+ \int_0^1 D(u) S'(u) \partial_xu\partial_x v  \partial_x\left(  \frac{D(u)}{S(u)} \partial_xu \right)dx
\\
&+\int_0^1 \frac{S'(u) D(u) (\partial_xu)^2}{2S^2(u)} D(u)S(u) \partial_{xx}vdx
\\&+\int_0^1 \frac{[S'(u)]^2 (\partial_xu)^2 D(u)^2}{2S^2(u)}\partial_x u \partial_xvdx.
\eqntag \label{eqn;KS-NDS}
\end{align*}
Here, the integration by parts implies
\begin{align*}
 &\int_0^1 D(u) S'(u) \partial_xu\partial_x v  \partial_x\left(  \frac{D(u)}{S(u)} \partial_xu \right)dx
 \\
 =\,& \frac12\int_0^1 \partial_x\left( \left[ \frac{D(u)}{S(u)} \partial_xu \right]^2 \right)  S(u) S'(u) \partial_xvdx
 \\
 =\,& - \frac12 \int_0^1 \frac{D(u)^2(\partial_xu)^2}{S^2(u)} S(u) S'(u) \partial_{xx}vdx
 \\& -\frac12 \int_0^1 \frac{D(u)^2(\partial_xu)^2}{S(u)^2}S''(u) S(u)\partial_x v\partial_xudx
 \\& -\frac12 \int_0^1 \frac{D(u)^2(\partial_xu)^2}{S^2(u)} [S'(u)]^2 \partial_xv \partial_xudx.
\end{align*}
Plugging the above in \eqref{eqn;KS-NDS}, we arrive at
\begin{align*}
II=\,&\int D(u)S(u) \partial_{xx}v \partial_x\left(  \frac{D(u)}{S(u)} \partial_xu \right)dx
\\&-\frac12 \int_0^1 \frac{D(u)^2(\partial_xu)^2}{S(u)}S''(u)  \partial_xv\partial_xudx.
\eqntag\label{eqn;II}
\end{align*}
Let us now estimate $I$ based on the same spirit in the proof of Theorem \ref{thm;1D-FI}. By the integration by parts, it follows
\begin{align*}
I=\,&\int_0^1\dx \Sigma(u) \dx\left(\frac{D(u)}{\sqrt{S(u)}} \dx\big[D(u)\dx u\big]\right) dx
\\
=\,&\int_0^1\dx  \Sigma(u)  \dx\left(\frac{D(u)}{\sqrt{S(u)}} \big[D'(u)(\dx u)^2+D(u)\partial_{xx}u\big]\right) dx.
\end{align*}
Since the definition of $\Sigma$ implies
\begin{align*}
&\frac{D(u)}{\sqrt{S(u)}} \big[D'(u)(\dx u)^2+D(u)\partial_{xx}u\big]
\\
=\,&
D(u) \left\{\left[\frac{D'(u)}{\sqrt{S(u)}}-\frac{D(u)S^\prime(u)}{2S(u)\sqrt{S(u)}}\right](\dx u)^2+\frac{D(u)}{\sqrt{S(u)}}\partial_{xx}u\right\}
\\&+\frac{D(u)^2S^\prime(u)}{2S(u)\sqrt{S(u)}}(\dx u)^2
\\
=\,&D(u) \left\{\Sigma''(u)(\dx u)^2+\Sigma'(u)\partial_{xx}u\right\}+\frac{S^\prime(u)}{2\sqrt{S(u)}} \left| \Sigma'(u) \dx u\right|^2
\\
=\,&D(u) \partial_{xx}\Sigma(u) +\frac{S^\prime(u)}{2\sqrt{S(u)}}\left|\dx\Sigma(u) \right|^2,
\end{align*}
we see that
\begin{align*}\label{eqn;1D-4}
\eqntag
I
=\,&
\int_0^1\dx\left( \Sigma(u) \right) \dx\left(D(u) \partial_{xx}\Sigma(u) +\frac{S^\prime(u)}{2\sqrt{S(u)}}\left|\dx \left(\Sigma(u) \right) \right|^2\right)dx
\\
\nn
=\,&
\int_0^1\dx\left( \Sigma(u) \right) \dx\Big(D(u) \partial_{xx}\left( \Sigma(u) \right) \Big)dx
\\
&+\int_0^1\dx \Sigma(u)  \dx\left(\frac{S^\prime(u)}{2\sqrt{S(u)}}\left|\dx\left(\Sigma(u)\right)\right|^2\right)dx.
\end{align*}
The first term on the right hand side of \eqref{eqn;1D-4} is
\begin{align}\label{F}
\int_0^1\dx \Sigma(u)  \dx\Big(D(u) \partial_{xx}\Sigma(u) \Big)dx
=-\int_0^1 D(u) \left|\partial_{xx}  \Sigma(u) \right|^2dx.
\end{align}
We next deal with the the second term on the right hand side of \eqref{eqn;1D-4}.
Since the definition of $\Sigma$ implies
\begin{align*}
&\dx\left(\frac{S^\prime(u)}{2\sqrt{S(u)}}\left|\dx\Sigma(u) \right|^2\right)\\
=\,&
\left(\frac{S^{\prime \prime}(u)}{2\sqrt{S(u)}} - \frac{(S^\prime(u))^2}{4S(u)\sqrt{S(u)}} \right) \dx u \left|\dx\Sigma(u) \right|^2
+\frac{S^\prime(u)}{\sqrt{S(u)}} \dx\Sigma(u)  \partial_{xx} \Sigma(u)
\\
=\,&
\left(\frac{S^{\prime \prime}(u)}{2\sqrt{S(u)}} - \frac{(S^\prime(u))^2}{4S(u)\sqrt{S(u)}} \right) \cdot \frac{\sqrt{S(u)}}{D(u)} \dx \Sigma(u)\cdot  \left|\dx\Sigma(u) \right|^2
\\&
+\frac{S^\prime(u)}{\sqrt{S(u)}} \dx \Sigma(u)  \partial_{xx} \Sigma(u)
\\
=\,&
\frac{2S(u)S^{\prime\prime}(u)-(S^\prime(u))^2}{4D(u)S(u)}
\left(\dx\Sigma(u) \right)^3
+\frac{S^\prime(u)}{\sqrt{S(u)}} \dx\Sigma(u)  \partial_{xx} \Sigma(u),
\end{align*}
the second term of \eqref{eqn;1D-4} can be represented as
\begin{align*}
&\int_0^1\dx \Sigma(u) \dx\left(\frac{S^\prime(u)}{2\sqrt{S(u)}}\left|\dx\Sigma(u)\right|^2\right)dx
\\=\,&
\io \frac{2S(u)S^{\prime\prime}(u)-(S^\prime(u))^2}{4D(u)S(u)}
\left|\dx\Sigma(u)\right|^4dx
+
\io \frac{S^\prime(u)}{\sqrt{S(u)}} \left|\dx \Sigma(u) \right|^2 \partial_{xx} \Sigma(u)dx.
\eqntag \label{eqn;KS-1d}
\end{align*}
Hence, \eqref{eqn;1D-4}, \eqref{F} and \eqref{eqn;KS-1d} yield
\begin{align*}
\eqntag
\label{entropypr_1}
I
=\,&
-\int_0^1 D(u) \left|\partial_{xx}  \Sigma(u) \right|^2dx
\nn
+
\io \frac{2S(u)S^{\prime\prime}(u)-(S^\prime(u))^2}{4D(u)S(u)}
\left|\dx\Sigma(u)\right|^4dx
\\&+
\io \frac{S^\prime(u)}{\sqrt{S(u)}} \left|\dx \Sigma(u) \right|^2 \partial_{xx} \Sigma(u)dx
\\
=\,&
-\int_0^1\left|\sqrt{D(u)}\partial_{xx}\Sigma(u)  - \frac{S^\prime(u)}{2\sqrt{D(u)S(u)}} \left|\dx \Sigma(u) \right|^2\right|^2 dx
\\&+
\frac{1}{2}\io \frac{S^{\prime\prime}(u)}{D(u)}
\left|\dx\Sigma(u) \right|^4dx.
\end{align*}
Simple calculations show that
\begin{align*}
&\sqrt{D(u)}\partial_{xx} \Sigma(u)  - \frac{S^\prime(u)}{2\sqrt{D(u)S(u)}} \left|\dx \Sigma(u) \right|^2\\
=\,&
\sqrt{D(u)}\partial_{xx} \Sigma(u) - \frac{S^\prime(u)}{2\sqrt{D(u)S(u)}}\cdot \frac{D(u)}{\sqrt{S(u)}}\dx u \cdot \dx \Sigma(u)\\
=\,& \sqrt{D(u)S(u)}\left[\frac{1}{\sqrt{S(u)}}\partial_{xx} \Sigma(u) - \frac{S^\prime(u)}{2S(u)\sqrt{S(u)}}\dx u \cdot \dx \Sigma(u) \right] \\
=\,&
\sqrt{D(u)S(u)} \dx \left( \frac{1}{\sqrt{S(u)}} \dx \Sigma(u) \right).
\end{align*}
Hence, owing to \eqref{entropypr_1}
\begin{align*}
I
=\,&
-\int_0^1\left| \sqrt{D(u)S(u)} \dx \left( \frac{1}{\sqrt{S(u)}} \dx \Sigma(u) \right) \right|^2 dx
\\&+
\frac{1}{2}\io \frac{S^{\prime\prime}(u)}{D(u)}
\left|\dx\Sigma(u)\right|^4dx.
\eqntag\label{eqn;I}
\end{align*}
Now, plugging \eqref{eqn;I} and \eqref{eqn;II} into \eqref{eqn;I-II}, we obtain \eqref{eqn;identity-NSD}.
\end{pr}

\vspace{3mm}
Let us now proceed towards the proof of Proposition \ref{prop_regularity_ep}.
The Idea of proof is the following: The second term of right hand side in \eqref{entro_prod} can be represented as
$$
 \int_0^1
\dfrac{(D(u))^3 S''(u)}{2(S(u))^2} |\partial_xu|^4dx
-
 \int_0^1
\dfrac{(D(u))^2 S''(u)}{2S(u)} |\partial_xu|^2 \partial_xu\partial_x vdx.
$$
When we focus on the case $q\in [0,1]$,  we have $S''(u)\leq 0$.
Thus the first term of the above is a good term and we invoke this term and suitable regularity estimates on $\partial_xv$ to kill the second term.
\subsubsection{Entropy production estimate}\label{section_est}
We recall the regularity estimate on $v$ and $\partial_ x v$.
Due to the mass conservation of $u$,  the standard semigroup estimate guarantees the following.
\begin{lem}\label{reg_v}
Let $(u,v)$ be a solution of \eqref{P1} in $(0,T)\times (0,1)$.
Then for all $r \in [1, \infty)$, there exists some $C=C(T, u_0, v_0, r)$ such that
$$
\|v(t)\|_{L^r} +\|\partial_x v(t)\|_{L^r} \leq C, \qquad t\in (0,T).
$$
\end{lem}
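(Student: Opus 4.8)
\begin{pr}{Lemma \ref{reg_v}}
The plan is to represent $v$ via the Duhamel formula for the second equation of \eqref{P1}, and then to combine the smoothing properties of the Neumann heat semigroup with the conservation of mass. Write $\bigl(e^{\tau\Delta}\bigr)_{\tau\ge0}$ for the heat semigroup on $(0,1)$ subject to homogeneous Neumann boundary conditions. Since $(u,v)$ is a classical solution of \eqref{P1}, it satisfies
\[
v(t)=e^{-t}e^{t\Delta}v_0+\int_0^t e^{-(t-s)}e^{(t-s)\Delta}u(s)\,ds,\qquad t\in(0,T).
\]
First I would record the standard smoothing estimates for this semigroup: there is $C_1>0$ such that for all $w\in L^1(0,1)$, all $\tau>0$ and all $q\in[1,\infty)$,
\[
\|e^{\tau\Delta}w\|_{L^q(0,1)}\le C_1\bigl(1+\tau^{-\frac{1}{2}(1-\frac1q)}\bigr)\|w\|_{L^1(0,1)},\qquad
\|\partial_x e^{\tau\Delta}w\|_{L^q(0,1)}\le C_1\bigl(1+\tau^{-1+\frac{1}{2q}}\bigr)\|w\|_{L^1(0,1)}
\]
(the first inequality being valid for $q=\infty$ as well); these are classical, see, e.g., \cite{Wi12} and the references therein. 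The decisive point is that $-\frac{1}{2}(1-\frac1q)>-1$ and $-1+\frac{1}{2q}>-1$ for every \emph{finite} $q$, so both singular factors are integrable near $\tau=0$.

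Next I would estimate the two contributions to $v(t)$ separately. For the homogeneous term, $v_0\in W^{1,\infty}(0,1)$ and the maximum principle give $\|e^{t\Delta}v_0\|_{L^\infty(0,1)}\le\|v_0\|_{L^\infty(0,1)}$; moreover $\partial_x\bigl(e^{t\Delta}v_0\bigr)$ solves the heat equation with homogeneous Dirichlet data and initial value $\partial_x v_0\in L^\infty(0,1)$, so the maximum principle again yields $\|\partial_x e^{t\Delta}v_0\|_{L^\infty(0,1)}\le\|\partial_x v_0\|_{L^\infty(0,1)}$. Hence the homogeneous term is bounded in $W^{1,\infty}(0,1)$ uniformly in $t>0$ by $\|v_0\|_{W^{1,\infty}(0,1)}$. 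For the Duhamel integral I would invoke the conservation of mass $\|u(s)\|_{L^1(0,1)}=M$ together with the two smoothing estimates applied to $w=u(s)$: for every $r\in[1,\infty)$, the substitution $\sigma=t-s$ gives
\[
\Bigl\|\int_0^t e^{-(t-s)}e^{(t-s)\Delta}u(s)\,ds\Bigr\|_{L^r(0,1)}\le C_1M\int_0^\infty e^{-\sigma}\bigl(1+\sigma^{-\frac{1}{2}(1-\frac1r)}\bigr)\,d\sigma,
\]
\[
\Bigl\|\int_0^t e^{-(t-s)}\partial_x e^{(t-s)\Delta}u(s)\,ds\Bigr\|_{L^r(0,1)}\le C_1M\int_0^\infty e^{-\sigma}\bigl(1+\sigma^{-1+\frac{1}{2r}}\bigr)\,d\sigma,
\]
and the two right-hand sides equal $C_1M(1+\Gamma(\tfrac{1}{2}+\tfrac{1}{2r}))$ and $C_1M(1+\Gamma(\tfrac{1}{2r}))$ respectively, which are finite for every finite $r$. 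Adding the two contributions and using the triangle inequality gives $\|v(t)\|_{L^r(0,1)}+\|\partial_x v(t)\|_{L^r(0,1)}\le C(T,u_0,v_0,r)$ for all $t\in(0,T)$; in fact the bound can be taken independent of $T$, so $T$ plays essentially no role.

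The only genuine obstacle is the behaviour of the gradient kernel near the diagonal $s=t$: $\|\partial_x e^{(t-s)\Delta}\|_{L^1(0,1)\to L^r(0,1)}$ scales like $(t-s)^{-1+\frac{1}{2r}}$, whose time integral diverges exactly when $r=\infty$ (there $\Gamma(\tfrac{1}{2r})$ degenerates to $\Gamma(0)$). This is precisely why the statement is restricted to $r\in[1,\infty)$. All the remaining ingredients are routine: the exponential factors $e^{-(t-s)}$ only help, and the single structural input that makes the argument go through is the constancy of $\|u(s)\|_{L^1(0,1)}$.
\end{pr}
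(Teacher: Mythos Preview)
Your proof is correct and follows exactly the approach the paper has in mind: the paper does not spell out any argument for Lemma~\ref{reg_v} beyond the sentence ``Due to the mass conservation of $u$, the standard semigroup estimate guarantees the following,'' and your Duhamel representation combined with the $L^1\!\to\! L^r$ and $L^1\!\to\! W^{1,r}$ smoothing estimates for the Neumann heat semigroup is precisely that standard argument. Your observation that the gradient kernel exponent $-1+\tfrac{1}{2r}$ stays integrable exactly for finite $r$ also explains cleanly why the lemma stops short of $r=\infty$.
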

The direct calculations imply the following.
\begin{lem}\label{lem_S}
$S(\tau) = \tau (1+\tau)^{-q}$ satisfies
\begin{eqnarray*}
S^\prime (\tau) &=& (1+\tau )^{-q-1}(1+\tau - q\tau),\\
S^{\prime \prime} (\tau) &=& (1+\tau)^{-q-2} (q(q-1)\tau -2q).
\end{eqnarray*}
Moreover, if $q\in [0,1]$, we have
$$
S^{\prime \prime} (\tau) \leq 0 \qquad (\tau \geq0).
$$
\end{lem}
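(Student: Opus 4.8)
The plan is to verify the two derivative formulas by a direct differentiation of $S(\tau) = \tau(1+\tau)^{-q}$ and then read off the sign of $S''$ from its factored form; there is no genuine obstacle here, as this is an elementary computation, so the main point in writing it up is to keep the algebra transparent.

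First I would compute $S'$ via the product rule: from $S(\tau) = \tau(1+\tau)^{-q}$ we get $S'(\tau) = (1+\tau)^{-q} - q\tau(1+\tau)^{-q-1}$, and factoring out the common power $(1+\tau)^{-q-1}$ yields $S'(\tau) = (1+\tau)^{-q-1}\bigl((1+\tau) - q\tau\bigr) = (1+\tau)^{-q-1}\bigl(1 + (1-q)\tau\bigr)$, which is the claimed expression for $S'$.

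Next I would differentiate this product once more. Applying the product rule to $(1+\tau)^{-q-1}$ and $1 + (1-q)\tau$ gives $S''(\tau) = -(q+1)(1+\tau)^{-q-2}\bigl(1+(1-q)\tau\bigr) + (1-q)(1+\tau)^{-q-1}$. Pulling out $(1+\tau)^{-q-2}$ and expanding the resulting bracket, the constant term is $-(q+1)+(1-q) = -2q$ and the coefficient of $\tau$ is $-(q+1)(1-q)+(1-q) = (1-q)(-q) = q(q-1)$, so $S''(\tau) = (1+\tau)^{-q-2}\bigl(q(q-1)\tau - 2q\bigr)$; this is the only place where a sign slip could creep in, so I would double-check the coefficient bookkeeping carefully.

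Finally, for the sign assertion when $q\in[0,1]$, I would note that $(1+\tau)^{-q-2} > 0$ for all $\tau \ge 0$, so the sign of $S''(\tau)$ is that of $q(q-1)\tau - 2q = q\bigl((q-1)\tau - 2\bigr)$. For $q\in[0,1]$ we have $q\ge 0$, while $(q-1)\tau - 2 \le -2 < 0$ since $q-1\le 0$ and $\tau\ge 0$; hence the product is nonpositive, giving $S''(\tau)\le 0$ on $[0,\infty)$, as claimed.
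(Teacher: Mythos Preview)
Your proof is correct and follows exactly the approach the paper indicates: the paper simply states that ``the direct calculations imply the following'' without writing out the details, and your product-rule computation together with the factorization $q(q-1)\tau - 2q = q\bigl((q-1)\tau - 2\bigr)$ is precisely the intended direct verification.
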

In light of the fact $S^{\prime \prime} \leq 0$ and Lemma \ref{reg_v} we have the following
lemma, which is an important step in the proof of Proposition \ref{prop_regularity_ep}.
\begin{lem}\label{lem_est_ep1}
Assume that $p-q = 1$ and $q\in \left(\frac{1}{2}, 1\right]$.
Let $(u,v)$ be a solution of \eqref{P1} in $(0,T)\times (0,1)$.
There exists some $C=C(T)$ such that
$$
\mathcal{F}(u(t)) \leq C, \qquad t\in (0, T).
$$
\end{lem}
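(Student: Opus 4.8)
The plan is to integrate the entropy production identity \eqref{entro_prod} from Proposition \ref{prop_new_lyapunov} in time and show that all terms on the right-hand side, together with the bad term in $\mathcal{D}$, stay controlled. Recall that $\mathcal{F}(u(t)) = \mathcal{F}(u_0) + \int_0^t \left(\text{RHS of }\eqref{entro_prod}\right)ds - \int_0^t \mathcal{D}(u(s),v(s))\,ds$; since $\mathcal{D}\ge 0$ it may be dropped, EXCEPT that, as noted in Section \ref{section_est}, the second term on the right of \eqref{entro_prod} is split into a genuinely good piece $\int_0^1 \frac{(D(u))^3 S''(u)}{2(S(u))^2}|\partial_x u|^4\,dx \le 0$ (here using $q\in(1/2,1]\subset[0,1]$, so $S''\le 0$ by Lemma \ref{lem_S}) and a bad piece $-\int_0^1 \frac{(D(u))^2 S''(u)}{2S(u)}|\partial_x u|^2\partial_x u\,\partial_x v\,dx$. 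The strategy is to absorb this bad piece into the good piece plus a term involving only $\partial_x v$, via Young's inequality, and then bound the leftover using Lemma \ref{reg_v}.

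First I would make the exponents explicit: with $p-q=1$, $D(u)=(1+u)^{-p}$, $S(u)=u(1+u)^{-q}$, one computes $\frac{D(u)^2}{S(u)} = \frac{(1+u)^{-2p}}{u(1+u)^{-q}} = \frac{1}{u(1+u)^{p+1}}$ (using $2p-q = p+1$), so the leading term of $\mathcal{F}$ is exactly $\frac12\int_0^1 \frac{|\partial_x u|^2}{u(1+u)^{p+1}}\,dx$, matching the quantity in Proposition \ref{prop_regularity_ep}. Second, I would check that the non-leading term $-\int_0^1 \Psi(u)\,dx$ in $\mathcal{F}$ is bounded: $\Psi$ grows at most polynomially (indeed the relevant integrand $\frac{\tau D(\tau)S'(\tau)}{S(\tau)} + \tau D(\tau)$ behaves like a bounded rational function under $p-q=1$), so $\int_0^1 |\Psi(u)|\,dx \le C(1 + \int_0^1 u^{\alpha}\,dx)$ for some small $\alpha$, and mass conservation plus an $L^1$-interpolation (or simply $\|u\|_{L^1}=M$ together with the fact that $\Psi$ has at most linear growth at infinity when $p-q=1$) gives a uniform bound; this converts a bound on $\mathcal F$ into the claimed bound in Proposition \ref{prop_regularity_ep}, but for the present lemma I only need $\mathcal F$ itself bounded.

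The core estimate is the following. Write $b(u) := \frac{(D(u))^2|S''(u)|}{2S(u)} \ge 0$. The bad term is bounded by $\int_0^1 b(u)|\partial_x u|^3 |\partial_x v|\,dx$. By Young's inequality with the good term $\int_0^1 \frac{D(u)|S''(u)|}{2S(u)^2}(D(u))^2|\partial_x u|^4\,dx = \int_0^1 \frac{D(u)}{S(u)} b(u)|\partial_x u|^4\,dx$, one splits $b(u)|\partial_x u|^3|\partial_x v| = \left(\sqrt{\tfrac{D(u)}{S(u)}}\,b(u)^{1/2}|\partial_x u|^2\right)\cdot\left(b(u)^{1/2}\sqrt{\tfrac{S(u)}{D(u)}}|\partial_x u|\,|\partial_x v|\right)$ and absorbs the first factor squared into the good term, leaving $C\int_0^1 b(u)\frac{S(u)}{D(u)}|\partial_x u|^2|\partial_x v|^2\,dx$. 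Here the point is that $b(u)\frac{S(u)}{D(u)} = \frac{D(u)|S''(u)|}{2} $ is a bounded function of $u$ (check: $D(u)=(1+u)^{-p}$, $|S''(u)| \le C(1+u)^{-q-1}$, product $\lesssim (1+u)^{-p-q-1}\to 0$), so this reduces to $C\int_0^1 |\partial_x u|^2|\partial_x v|^2\,dx$, which by Cauchy--Schwarz is $\le C\|\partial_x v\|_{L^4}^2 \left(\int_0^1 |\partial_x u|^4\,dx\right)^{1/2}$ — still quartic in $\partial_x u$. To close, I would instead keep a small fraction of the good quartic term and a small fraction of the dissipation $\mathcal D$ (which controls $\int_0^1 S(u)D(u)|\partial_x(\frac{D(u)}{S(u)}\partial_x u)|^2$); combining with the Bernis-type inequality \eqref{eqn;Beris} one gains control of $\int_0^1 \frac{a(u)^3}{u^3}|\partial_x u|^4$, hence (after matching $a$ with $D/S$) of the quartic term $\int_0^1 \frac{D(u)^3}{S(u)^3}|\partial_x u|^4$, up to the constant $(1+\sqrt n)^2 = 4$ in $1$D. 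Thus the bad term is $\le \varepsilon\cdot(\text{good quartic}) + \varepsilon\cdot\mathcal D + C_\varepsilon\|\partial_x v\|_{L^r}^{r'}$ for suitable exponents, and Lemma \ref{reg_v} makes the last term a finite constant on $(0,T)$. Integrating in time, the good quartic and $\mathcal D$ absorb, and we get $\mathcal F(u(t)) \le \mathcal F(u_0) + C(T)$.

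The main obstacle I anticipate is precisely this absorption: the bad term is quartic in $\partial_x u$ and, after the first Young step, is not obviously dominated by the available good quartic term alone (whose coefficient $\frac{D(u)}{S(u)}|S''(u)|D(u)^2/(2S(u))$ may be too small relative to the needed one), so one genuinely needs to also borrow from the dissipation $\mathcal D$ and use the functional inequality \eqref{eqn;Beris} with the sharp $1$D constant to relate $\int S(u)D(u)|\partial_x(\frac{D(u)}{S(u)}\partial_x u)|^2$ to $\int \frac{D(u)^3}{S(u)^3}|\partial_x u|^4$. Verifying that the restriction $q\in(1/2,1]$ (equivalently $p\in(3/2,2]$) is exactly what makes the coefficient $b(u)\frac{S(u)}{D(u)}$ decay fast enough — and that $q=1/2$ is the threshold — is the delicate bookkeeping; once the exponent arithmetic is pinned down the rest is a routine Grönwall-free integration since the good terms have a fixed positive sign.
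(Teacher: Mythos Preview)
Your overall plan---split the $S''$ term into a nonpositive quartic piece and a cubic-in-$\partial_x u$ bad piece, then absorb the latter into the former via Young---is exactly the paper's plan, but your execution of the Young step does not close. You apply Young with exponents $(2,2)$, obtain a remainder $C\int_0^1 b(u)\tfrac{S(u)}{D(u)}|\partial_x u|^2|\partial_x v|^2\,dx$, and then \emph{bound the weight $b(u)\tfrac{S(u)}{D(u)}=\tfrac12 D(u)|S''(u)|$ by a constant}. This discards the decay in $u$ and leaves you with an unweighted $\int_0^1 |\partial_x u|^2|\partial_x v|^2\,dx$, which is not controllable by anything available: both the good quartic $\int_0^1 \tfrac{D^3|S''|}{S^2}|\partial_x u|^4\,dx$ and any Bernis-type quantity one could extract from $\mathcal D$ carry weights that \emph{decay} in $u$, so they cannot dominate an unweighted quartic in $\partial_x u$. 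Your proposed rescue via \eqref{eqn;Beris} and ``borrowing from $\mathcal D$'' is therefore a dead end here; the paper uses neither. The correct move is a single Young inequality with exponents $(4/3,4)$ (equivalently: keep the decaying weight $D|S''|$ and apply a second Young step against the good quartic), which gives
\[
|I_3|\le |I_2| + C\int_0^1 \frac{S(u)^2|S''(u)|}{D(u)}\,|\partial_x v|^4\,dx.
\]
This remainder contains \emph{no} $\partial_x u$, and its weight behaves like $(1+u)^{2(1-q)}$; that is precisely where the restriction $q>\tfrac12$ enters (so $2(1-q)<1$ and mass conservation together with Lemma~\ref{reg_v} and H\"older suffice), not in the place you identified.

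There is a second gap: you never treat the first term on the right of \eqref{entro_prod}, namely $\int_0^1 \tfrac{S(u)D(u)(v+v_t)^2}{4}\,dx$. It is not absorbed by $\mathcal D$ and must be estimated on its own: since $S(u)D(u)\le (1+u)^{1-p-q}=(1+u)^{-2q}\le 1$, it is bounded by $C+C\int_0^1|v_t|^2\,dx$, and then the bound from the \emph{classical} Lyapunov functional (cf.\ \eqref{Lyap_KS}--\eqref{Lyap_est_KS}), which yields $\int_0^T\!\int_0^1|v_t|^2\,dx\,dt\le C$, is needed to conclude after integrating in time. Without this ingredient the argument is incomplete.
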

\begin{pr}{Lemma \ref{lem_est_ep1}}
The identity \eqref{entro_prod} can be written as follows:
\begin{eqnarray}
\dfrac{d}{dt} \mathcal{F}(u(t)) + \mathcal{D}(u(t),v(t)) =I_1+I_2+I_3,
\end{eqnarray}
where
\begin{eqnarray*}
I_1 &:=& \int_0^1 \frac{S(u)D(u)(v+v_t)^2}{4}dx,\\
I_2 &:=& \int_0^1 \frac{(D(u))^3 S''(u)}{2(S(u))^2} |\partial_xu|^4dx,\\
I_3 &:=& -\int_0^1 \frac{(D(u))^2 S''(u)}{2S(u)} |\partial_xu|^2\partial_xu\partial_xvdx.
\end{eqnarray*}
As to $I_1$, we can proceed the same  way as in \cite{Ci-Fu18}.
Since
$$
S(u)D(u) = (1+u)^{-p} \cdot u(1+u)^{-q} \leq (1+u)^{1-p-q},
$$
and the assumptions $p-q = 1$ and $q\in \left(\frac{1}{2}, 1\right]$ yield
$$
1-p-q = -2q <0,
$$
thus $S(u)D(u) \leq 1$.  By Lemma \ref{reg_v}, it follows
$$
|I_1| \leq C + C \int_0^1 |v_t|^2dx
$$
with some $C>0$.

Next, we estimate $I_3$. By Young's inequality, we have
\begin{eqnarray*}
|I_3| &\leq& \frac{1}{2} \int_0^1 \frac{(D(u))^2 |S''(u)|}{S(u)} |\partial_xu|^3  |\partial_xv|dx\\
&=&  \frac{1}{2} \int_0^1 \dfrac{(D(u))^{\frac{9}{4}} |S''(u)|^\frac{3}{4}}{(S(u))^{\frac{3}{2}}} |\partial_xu|^3 \cdot \frac{(S(u))^\frac{1}{2} |S''(u)|^{\frac{1}{4}}}{ (D(u))^\frac{1}{4}}  |\partial_xv|dx\\
&\leq& \frac{1}{2}  \int_0^1 \frac{(D(u))^{3} |S''(u)| }{(S(u))^{2}} |\partial_xu|^4dx
  +C \int_0^1 \frac{(S(u))^2 |S''(u)|}{ D(u)}  |\partial_xv|^4dx
\end{eqnarray*}
with some $C>0$.
Since $S^{\prime \prime} \leq 0$ as noted in Lemma \ref{lem_S}, we obtain
\begin{eqnarray*}
I_2+|I_3| \leq C \int_0^1 \frac{(S(u))^2 |S''(u)|}{ D(u)}  |\partial_xv|^4dx.
\end{eqnarray*}
It follows from Lemma \ref{lem_S} that
\begin{eqnarray*}
\frac{(S(u))^2 |S''(u)|}{ D(u)}
& =& \frac{u^2(1+u)^{-2q} (1+u)^{-q-2} |q(q-1)u -2q| }{(1+u)^{-p}}\\
&=&u^2(1+u)^{p-3q-2} |q(q-1)u -2q|\\
&\leq & C (1+u)^{p-3q+1},
\end{eqnarray*}
and then we invoke the assumption on $(p,q)$ to see
\begin{eqnarray*}
\frac{(S(u))^2 |S''(u)|}{ D(u)}   \leq  C (1+u)^{2(1-q)}.
\end{eqnarray*}
Since $q>\frac{1}{2}$ guarantees
$$
2(1-q) < 1,
$$
and with the mass conservation law and Lemma \ref{reg_v}, we can see that
$$
\int_0^1 \frac{(S(u))^2 |S''(u)|}{ D(u)}  |\partial_xv|^4dx\leq C\int_0^1 (1+u)^{2(1-q)}|\partial_xv|^4dx \leq C.
$$
Finally, combining the above calculations, we have
$$
\frac{d}{dt} \mathcal{F}(u(t)) \leq C+C\int_0^1 |v_t|^2dx.
$$
Since boundedness from below of the classical Lyapunov functional $\mathcal{L}$ (see~\eqref{Lyap_est_KS}) implies
\begin{equation}\label{F2}
\int_0^T \int_0^1 |v_t|^2 dx dt\leq C,
\end{equation}
we conclude the proof.
\end{pr}

We next prove that the term $\int_0^1 \Psi(u)dx$ can be estimated.
\begin{lem}\label{lem_est_ep2}
Assume that $p-q = 1$ and $q\in \left(\frac{1}{2}, 1\right]$.
Let $(u,v)$ be a solution of \eqref{P1} in $(0,T)\times (0,1)$.
There exists some $C=C(p, M)$ such that
$$
\int_0^1 \Psi(u) dx \leq C, \qquad t\in (0, T).
$$
\end{lem}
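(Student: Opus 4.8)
The plan is to show that the function $\Psi$ grows at most linearly, i.e. $\Psi(\phi)\le C(1+\phi)$ for all $\phi\ge 0$ with a constant depending only on $p$, and then to conclude by the mass conservation law $\int_0^1 u\,dx = M$. So the whole argument is elementary once the linear growth of $\Psi$ is established.

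First I would make the integrand of $\Psi$ explicit. Using $D(\tau)=(1+\tau)^{-p}$, $S(\tau)=\tau(1+\tau)^{-q}$ and the formula for $S'$ from Lemma~\ref{lem_S}, a direct computation gives
\[
\frac{\tau D(\tau)S'(\tau)}{S(\tau)} = (1+\tau)^{-p-1}\bigl(1+(1-q)\tau\bigr),
\qquad rD(r)=r(1+r)^{-p}.
\]
Since $q\le 1$, the first expression is nonnegative on $[0,\infty)$, and since $p=q+1>1$ it is integrable at infinity (it behaves like $(1-q)\tau^{-p}$, or like $\tau^{-p-1}$ when $q=1$), hence
\[
0\le \int_1^r (1+\tau)^{-p-1}\bigl(1+(1-q)\tau\bigr)\,d\tau \le C_1 \qquad (r\ge 1),
\]
with $C_1=C_1(p)$; for $r\in[0,1]$ the same integral is bounded in absolute value by a constant because the integrand is continuous on $[0,1]$. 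Likewise $rD(r)=r(1+r)^{-p}\le (1+r)^{1-p}\le 1$ because $p>1$. Therefore the outer integrand
\[
g(r):=\int_1^r\frac{\tau D(\tau)S'(\tau)}{S(\tau)}\,d\tau + rD(r)
\]
obeys $|g(r)|\le C_2$ for every $r\ge 0$, with $C_2=C_2(p)$.

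Integrating in the outer variable then yields $|\Psi(\phi)|=\bigl|\int_1^\phi g(r)\,dr\bigr|\le C_2|\phi-1|\le C_2(1+\phi)$ for all $\phi\ge 0$, and in particular $\Psi(u)\le C_2(1+u)$ pointwise. Evaluating at $\phi=u(t,x)$ and integrating over $(0,1)$ gives
\[
\int_0^1 \Psi(u)\,dx \le C_2\Bigl(1+\int_0^1 u\,dx\Bigr)=C_2(1+M),
\]
by the mass conservation law, which is the desired estimate with $C=C(p,M)$. (Together with Lemma~\ref{lem_est_ep1} and the identity $\frac{(D(u))^2}{S(u)}=\frac{1}{u(1+u)^{p+1}}$, valid since $p-q=1$, this gives Proposition~\ref{prop_regularity_ep}.) The argument carries essentially no difficulty; the only place that requires a little care is the integrability at infinity of the inner integral, which is exactly where $p=q+1>1$ (ensured by $q>\tfrac12$, in fact already by $q>0$) is used, together with the sign condition $q\le 1$ that keeps the integrand nonnegative and so removes the need to track any cancellation.
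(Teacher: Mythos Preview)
Your proof is correct and follows essentially the same route as the paper: compute the inner integrand explicitly, show it integrates to a bounded quantity (using $p>1$ and $q\le 1$), bound $rD(r)$, conclude that $\Psi$ grows at most linearly, and invoke mass conservation. If anything, your treatment of the $rD(r)$ term is slightly cleaner---you bound it pointwise by $1$, whereas the paper integrates $(1+r)^{1-p}$ and then uses $2-p<1$; both lead to the same linear growth conclusion.
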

\begin{pr}{Lemma \ref{lem_est_ep2}}
Since Lemma \ref{lem_S} implies
$$
\frac{\tau D(\tau) S'(\tau)}{S(\tau)} = \frac{\tau (1+\tau)^{-p} (1+\tau )^{-q-1}(1+\tau - q\tau)}{\tau (1+\tau)^{-q}}
= (1+\tau )^{-p-1}(1+(1-q)\tau),
$$
it follows from $q\leq 1$ that
$$
\frac{\tau D(\tau) S'(\tau)}{S(\tau)} \leq (1+\tau )^{-p}, \qquad \tau\geq 0.
$$
Hence, we have that for all $r\geq 0$,
\begin{eqnarray*}
\int_1^r \frac{\tau D(\tau) S'(\tau)}{S(\tau)}d\tau  \leq  \int_1^r (1+\tau )^{-p} d\tau
= \frac{2^{1-p}}{p-1} - \frac{(1+r)^{1-p}}{p-1} \leq \frac{2^{1-p}}{p-1},
\end{eqnarray*}
where we remark that the assumptions $p-q = 1$ and $q\in \left(\frac{1}{2}, 1\right]$ imply $p \in(\frac{3}{2},2]$.
Therefore, there exists some $C>0$ such that for all $\phi\geq0$,
\begin{eqnarray*}
\Psi(\phi) &\leq & \int_1^\phi \left( \frac{2^{1-p}}{p-1} + r D(r) \right)dr\\
&= & \int_1^\phi \left( \frac{2^{1-p}}{p-1} + r(1+r)^{-p} \right)dr\\
&\leq & \int_1^\phi \left( \frac{2^{1-p}}{p-1} + (1+r)^{-p+1} \right)dr\\
&\leq&
\begin{cases} C(\phi+(1+\phi)^{-p+2})\qquad (p<2),\\
C(\phi+\log (1+\phi))\qquad (p=2).
\end{cases}
\end{eqnarray*}
Since we have $0\leq -p+2 <1$,  the mass conservation law implies
$$
\int_0^1 \Psi(u) dx\leq C(p, M).
$$
We conclude the proof.
\end{pr}
\vspace{2mm}

We are in a position to prove Proposition \ref{prop_regularity_ep}.
\vspace{2mm}

\begin{pr}{Proposition \ref{prop_regularity_ep}}
Lemma \ref{lem_est_ep1} and Lemma \ref{lem_est_ep2} conclude the proof.
\end{pr}

\vspace{2mm}
Next, we derive the a priori estimate helpful in the proof of Theorem \ref{MT}.
\begin{lem}\label{lem_est_Lp}
Assume that $p-q = 1$ and $q\in \left(\frac{1}{2}, 1\right]$.
Let $(u,v)$ be a solution of \eqref{P1} in $(0,T)\times (0,1)$.
It holds
\begin{align*}
\frac{d}{dt} \int_0^1 u^pdx
&+ p(p-1)\int_0^1 u^{p-2}(1+u)^{-p} |\partial_xu|^2dx
\\
\leq\,&  \frac{3p(p-1)}{2} \int_0^1  u^2 dx
+ \frac{p(p-1)}{2} \int_0^1  |v_t|^2dx.
\end{align*}
\end{lem}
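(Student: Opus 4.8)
The plan is to test the first equation in \eqref{P1} with $p u^{p-1}$, integrate over $(0,1)$, and integrate by parts using the homogeneous Neumann boundary conditions. Since $D(u)=(1+u)^{-p}$ and $S(u)=u(1+u)^{-q}$ with $p-q=1$, i.e. $q=p-1$, the sensitivity simplifies to $S(u)=u(1+u)^{1-p}$. The diffusive contribution produces $-p(p-1)\int_0^1 u^{p-2}D(u)|\partial_x u|^2\,dx = -p(p-1)\int_0^1 u^{p-2}(1+u)^{-p}|\partial_x u|^2\,dx$, which is exactly the good dissipation term on the left-hand side of the claimed inequality.

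First I would handle the cross (chemotaxis) term. Testing $\partial_x(S(u)\partial_x v)$ against $pu^{p-1}$ and integrating by parts gives $-p(p-1)\int_0^1 u^{p-2}S(u)\,\partial_x u\,\partial_x v\,dx = -p(p-1)\int_0^1 u^{p-1}(1+u)^{1-p}\,\partial_x u\,\partial_x v\,dx$. The key observation is that $u^{p-1}(1+u)^{1-p}=\bigl(u/(1+u)\bigr)^{p-1}\le 1$, so the integrand is controlled by $p(p-1)\,|\partial_x u|\,|\partial_x v|$; more precisely one wants to reconstitute the good dissipation term, so I would write $u^{p-1}(1+u)^{1-p}\,\partial_x u = u^{(p-2)/2}(1+u)^{-p/2}\,\partial_x u \cdot u^{p/2}(1+u)^{-(p-2)/2}$ and apply Young's inequality splitting off $\tfrac12\cdot p(p-1)\int_0^1 u^{p-2}(1+u)^{-p}|\partial_x u|^2\,dx$ to be absorbed into the left-hand side. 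This leaves a remainder of the form $C\,p(p-1)\int_0^1 u^{p}(1+u)^{-(p-2)}|\partial_x v|^2\,dx$, and since $u^p(1+u)^{2-p}\le u^2$ (again because $u/(1+u)\le 1$ forces $u^{p}(1+u)^{-p}\le u^2(1+u)^{-2}$, hence $u^p(1+u)^{2-p}\le u^2$), this is bounded by $C\,p(p-1)\int_0^1 u^2|\partial_x v|^2\,dx$.

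Next I would use the regularity of $v$. The term $\int_0^1 u^2|\partial_x v|^2\,dx$ is not yet in the stated form, so I would apply Young's inequality once more, $u^2|\partial_x v|^2\le \tfrac32 u^2 + (\text{something})\,|\partial_x v|^4$ — but this does not match the clean statement either. The cleaner route, matching the right-hand side exactly, is to observe that the second equation in \eqref{P1} reads $v_t=\partial_{xx}v-v+u$, so one should not isolate $|\partial_x v|$ at all; instead I would test the chemotaxis term differently. Rewriting $\partial_x(S(u)\partial_x v)$ and using $\partial_{xx}v = v_t+v-u$ after an integration by parts transfers a derivative onto $v$: $-p(p-1)\int_0^1 u^{p-2}S(u)\partial_x u\,\partial_x v\,dx = -(p-1)\int_0^1 \partial_x\bigl(\text{(primitive of }u^{p-2}S(u)u')\bigr)\partial_x v\,dx = (p-1)\int_0^1 (\cdots)\,\partial_{xx}v\,dx = (p-1)\int_0^1 (\cdots)(v_t+v-u)\,dx$, where the primitive, call it $\Phi(u)$ with $\Phi'(s)=p s^{p-2}S(s)$, satisfies $0\le \Phi(u)\le C(1+u^2)$ because $\Phi'(s)=p s^{p-2}\cdot s(1+s)^{1-p}=p s^{p-1}(1+s)^{1-p}\le p s \cdot (s/(1+s))^{p-2}$... one checks $\Phi'(s)\le C(1+s)$. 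Then Young's inequality on $\Phi(u)\cdot v_t$ and $\Phi(u)\cdot(v-u)$, together with $\Phi(u)\le C(1+u^2)$ and the $L^1$ mass bound, yields the two terms $\tfrac{3p(p-1)}{2}\int_0^1 u^2\,dx$ and $\tfrac{p(p-1)}{2}\int_0^1|v_t|^2\,dx$ with the stated constants.

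The main obstacle is bookkeeping the constants so that exactly $\tfrac{3p(p-1)}{2}$ and $\tfrac{p(p-1)}{2}$ appear: one must choose the Young's inequality weights so that the $|v_t|^2$ coefficient is precisely $\tfrac{p(p-1)}{2}$ and the three remaining quadratic-in-$u$ contributions (from $v_t$-splitting, from the $v$ term, and from the $-u$ term, after bounding $\Phi(u)\lesssim u^2$ up to lower order absorbed by mass conservation) sum to $\tfrac{3p(p-1)}{2}$. No delicate analysis is needed beyond pointwise bounds of the type $u/(1+u)\le 1$ and the already-recorded bound $\int_0^1 u\,dx=M$; the diffusion term is produced for free and requires no absorption if one prefers, but absorbing half of it is the clean way to close the chemotaxis cross term without leaving any $|\partial_x u|$-dependent remainder.
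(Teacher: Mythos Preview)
Your second route---write the chemotaxis term via a primitive and replace $\partial_{xx}v$ by $v_t+v-u$---is exactly what the paper does, and once you commit to it the first detour (Young's inequality with absorption of half the dissipation) is unnecessary: after integrating by parts the cross term becomes $-p(p-1)\int_0^1\Phi(u)\,\partial_{xx}v\,dx$ with no $\partial_x u$ left, so there is nothing to absorb and the full dissipation term stays on the left.

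There is, however, a genuine gap in your execution. The primitive (with the paper's normalisation) is
\[
\Phi(u)=\int_0^u\Bigl(\frac{s}{1+s}\Bigr)^{p-1}ds,
\]
and the bound you actually need is the \emph{linear} one $0\le\Phi(u)\le u$, which follows immediately from $(s/(1+s))^{p-1}\le 1$. Your stated bound $\Phi(u)\le C(1+u^2)$ is too weak: feeding it into $\Phi(u)\cdot u$ produces a cubic term $\int u^3$, and Young's inequality on $\Phi(u)\cdot v_t$ produces $\int u^4$, neither of which is controlled by the right-hand side $\int u^2$. With the linear bound the three pieces fall out cleanly: $-p(p-1)\int\Phi(u)v\,dx\le 0$ by nonnegativity of $v$ (no Young's inequality needed here), $p(p-1)\int\Phi(u)u\,dx\le p(p-1)\int u^2\,dx$, and $p(p-1)\int\Phi(u)|v_t|\,dx\le p(p-1)\int u|v_t|\,dx\le\frac{p(p-1)}{2}\int u^2+\frac{p(p-1)}{2}\int|v_t|^2$, summing to the stated constants.
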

\begin{pr}{Lemma \ref{lem_est_Lp}}
Multiplying the first equation of \eqref{P1} by $pu^{p-1}$ and using the integration by parts, we have
\begin{align*}
\frac{d}{dt} \int_0^1 u^pdx =\,& p \int_0^1 u^{p-1}u_tdx\\
=\,& p \int_0^1 u^{p-1} \partial_x ((1+u)^{-p}\partial_xu - u(1+u)^{1-p}\partial_xv)dx\\
=\,& -p(p-1)\int_0^1 u^{p-2}(1+u)^{-p} |\partial_xu|^2dx
\\&+ p(p-1) \int_0^1 u^{p-1} (1+u)^{1-p} \partial_xu\partial_xvdx,
\end{align*}
where we used $-q = 1-p$.
We next use the second equation of \eqref{P1} and invoke the integration by parts again to have
\begin{eqnarray*}
\frac{d}{dt} \int_0^1 u^pdx
&+& p(p-1)\int_0^1 u^{p-2}(1+u)^{-p} |\partial_xu|^2dx\\
&=& p(p-1) \int_0^1 \left(\frac{u}{1+u}\right)^{p-1}\partial_xu\partial_x vdx\\
&=& p(p-1) \int_0^1 \partial_x \left( \int_0^u  \left(\frac{s}{1+s}\right)^{p-1}\,ds \right) \partial_x vdx\\
&=& - p(p-1) \int_0^1  \left( \int_0^u  \left(\frac{s}{1+s}\right)^{p-1}ds \right)  \partial_{xx} vdx\\
&=& - p(p-1) \int_0^1  \left( \int_0^u  \left(\frac{s}{1+s}\right)^{p-1}ds \right)  (v_t + v -u)dx.
\end{eqnarray*}
Since $p \in (\frac{3}{2}, 2]$ implies
$$
0 \leq \left(\frac{s}{1+s}\right)^{p-1} \leq 1,
$$
it follows
$$
0 \leq \int_0^u  \left(\frac{s}{1+s}\right)^{p-1}ds
\leq u.
$$
Combing the above estimates and using nonnegativity of $v$, we have
\begin{eqnarray*}
\frac{d}{dt} \int_0^1 u^pdx
&+& p(p-1)\int_0^1 u^{p-2}(1+u)^{-p} |\partial_xu|^2dx\\
&\leq &  p(p-1) \int_0^1  u |v_t| dx+ p(p-1) \int_0^1  u^2dx\\
&\leq & \frac{3p(p-1)}{2} \int_0^1  u^2dx + \frac{p(p-1)}{2} \int_0^1  |v_t|^2dx.
\end{eqnarray*}
This is the desired inequality.
\end{pr}
\begin{lem}\label{lem_est_L2}
Assume that $(p,q) = (2,1)$.
Let $(u,v)$ be a solution of \eqref{P1} in $(0,T)\times (0,1)$.
There exists some $C=C(T)$ such that
\begin{eqnarray*}
\int_0^1 u^pdx \leq  C.
\end{eqnarray*}
\end{lem}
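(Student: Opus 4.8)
The plan is to specialize Lemma \ref{lem_est_Lp} to the concrete exponents $(p,q)=(2,1)$ and then close a Gronwall-type argument using the a priori bounds already established. With $p=2$ Lemma \ref{lem_est_Lp} reads
\begin{align*}
\frac{d}{dt}\int_0^1 u^2\,dx + 2\int_0^1 \frac{|\partial_x u|^2}{(1+u)^2}\,dx
\le \,3\int_0^1 u^2\,dx + \int_0^1 |v_t|^2\,dx,
\end{align*}
so the first task is to control $\int_0^1 u^2\,dx$ on the right-hand side by the dissipation term on the left plus lower-order quantities, and the second task is to absorb $\int_0^T\!\int_0^1 |v_t|^2$, which is already known to be finite by \eqref{F2}.

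For the first task I would use the elementary interpolation/Sobolev argument in 1D: write $\int_0^1 u^2\,dx$ in terms of $\|u\|_{L^1}=M$ (mass conservation) and a small multiple of the gradient term. Concretely, since $q=1$ means $p+1=3$, Proposition \ref{prop_regularity_ep} already gives the bound
\begin{align*}
\int_0^1 \frac{|\partial_x u|^2}{u(1+u)^{3}}\,dx \le C(T),
\end{align*}
which together with mass conservation and the 1D Gagliardo--Nirenberg / Sobolev embedding $W^{1,1}(0,1)\hookrightarrow L^\infty(0,1)$ should yield an a priori bound on $\|u(t)\|_{L^\infty}$ or at least on $\|u(t)\|_{L^2}$ directly — in fact I expect that Proposition \ref{prop_regularity_ep} alone already forces $u(t)$ to be bounded in a good space, and the conclusion $\int_0^1 u^2\,dx\le C$ may follow essentially immediately from it. If one prefers to stay with the differential inequality, one estimates $\int_0^1 u^2\,dx \le \varepsilon \int_0^1 \frac{|\partial_x u|^2}{(1+u)^2}\,dx + C_\varepsilon(M)$ by writing $u^2 \le C(\|\partial_x(u/(1+u))\|_{L^1}+M)\cdot(\text{something bounded})$ and using Cauchy--Schwarz against the weight, then chooses $\varepsilon$ small enough to absorb the term into the dissipation; the residual gradient term on the right of Lemma \ref{lem_est_Lp} is then controlled, and after integrating in time one picks up only $\int_0^T\!\int_0^1|v_t|^2\le C$ and the initial datum $\int_0^1 u_0^2\,dx<\infty$ (recall $u_0\in W^{1,\infty}$).

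The main obstacle is the interpolation step that bounds $\int_0^1 u^2$ by a controllable fraction of the weighted dissipation $\int_0^1 |\partial_x u|^2/(1+u)^2$ plus constants depending only on $M$ and $T$: one has to be careful that the weight $(1+u)^{-2}$ does not degenerate where $u$ is large, which is precisely why the stronger weighted estimate of Proposition \ref{prop_regularity_ep} (with the extra factor $u(1+u)$ in the denominator is what should make the argument work — it is this estimate, not merely the crude differential inequality, that rules out concentration. Once $\sup_{t\in(0,T)}\int_0^1 u^2\,dx\le C(T)$ is in hand, Gronwall on the inequality above (with the $3\int u^2$ term now a bounded forcing) finishes the proof. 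I would present the argument in the order: (1) invoke Proposition \ref{prop_regularity_ep} with $p=2$; (2) 1D interpolation to bound $\|u\|_{L^2}$; (3) if needed, feed this into Lemma \ref{lem_est_Lp} with $p=2$ and integrate using \eqref{F2}.
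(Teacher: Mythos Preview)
You have missed the one-line observation that makes the paper's proof trivial: when $p=2$, the term $\tfrac{3p(p-1)}{2}\int_0^1 u^{2}\,dx$ on the right-hand side of Lemma~\ref{lem_est_Lp} is \emph{exactly} $3\int_0^1 u^{p}\,dx$, i.e.\ a multiple of the quantity whose time derivative sits on the left. Dropping the nonnegative dissipation term, the inequality is already in closed Gronwall form
\[
\frac{d}{dt}\int_0^1 u^{p}\,dx \le C_1\int_0^1 u^{p}\,dx + C_2\int_0^1 |v_t|^2\,dx,
\]
and \eqref{F2} gives $\int_0^T\!\int_0^1|v_t|^2\,dx\,dt\le C$, so Gronwall yields the bound immediately. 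No interpolation, no Proposition~\ref{prop_regularity_ep}, no absorption into the dissipation is needed here.

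Your proposed route has genuine problems beyond being roundabout. First, the interpolation you sketch --- bounding $\int_0^1 u^2\,dx$ by $\varepsilon\int_0^1 |\partial_x u|^2/(1+u)^2\,dx + C_\varepsilon(M)$ --- fails in general: the weight $(1+u)^{-2}$ degenerates precisely where $u$ is large, and functions like $\log(1+u)$ or $u/(1+u)$, whose gradients this dissipation controls, are uniformly bounded regardless of how large $u$ is, so their $W^{1,1}$ or $H^1$ bounds give no upper control on $u$. Second, your claim that Proposition~\ref{prop_regularity_ep} alone forces an $L^2$ (or $L^\infty$) bound on $u$ is not justified: the natural auxiliary function built from that estimate, namely $\psi(u)=2\sqrt{u}/\sqrt{1+u}$ (with $|\partial_x\psi(u)|^2 = |\partial_x u|^2/(u(1+u)^3)$), is again bounded by $2$, so Sobolev embedding on $\psi(u)$ yields nothing. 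Indeed, in the paper the $L^\infty$ bound on $\log(1+u)$ in the proof of Theorem~\ref{MT} requires \emph{both} Proposition~\ref{prop_regularity_ep} \emph{and} the $L^p$ bound of the present lemma; you cannot use the former alone to obtain the latter. Finally, your last sentence is circular: once you have $\sup_t\int_0^1 u^2\,dx\le C$ there is nothing left to prove, so ``then applying Gronwall'' is superfluous.
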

\begin{proof}
By Lemma \ref{lem_est_Lp}, we have
\begin{eqnarray*}
\frac{d}{dt} \int_0^1 u^pdx
\leq  C_1 \int_0^1  u^pdx + C_2 \int_0^1  |v_t|^2dx.
\end{eqnarray*}
We invoke the Gronwall inequality and \eqref{F2} to conclude.
\end{proof}
Now with the above auxiliary regularity estimate,  we give the proof of Theorem \ref{MT}.

\begin{pr}{Theorem \ref{MT}}
It follows from the one-dimensional Sobolev embedding that
\begin{align*}
\|\log (1+u)\|_{L^\infty} \le\,& C \|\log (1+u)\|_{W^{1,1}}
\\
\le\, & C(M+1) + \int_0^1 |\partial_x (\log (1+u))|dx
\\
=\,& C(M+1) + \int_0^1 \frac{|\partial_xu|}{1+u}dx
\\
\le\, & C(M+1)\\&+\left(\int_0^1 (1+u)^pdx \right)^{\frac{1}{2}} \left(\int_0^1 \frac{|\partial_xu|^2}{(1+u)^p(1+u)^2}dx \right)^{\frac{1}{2}}.
\end{align*}
When $(p,q) = (2,1)$, Lemma \ref{lem_est_L2} guarantees
$$
\int_0^1 (1+u)^p dx \leq C(T).
$$
Moreover, Proposition \ref{prop_regularity_ep} implies
$$
\int_0^1 \frac{|\partial_xu|^2}{(1+u)^p(1+u)^2}dx
 \leq \int_0^1 \frac{|\partial_xu|^2}{u(1+u)^{p+1}}dx \leq C(T).
$$
Combining the above implies
$$
\|\log (1+u)\|_{L^\infty} \leq C(T),
$$
and then global existence is established.
\end{pr}
\section{The case of a $p$-Laplace equation}\label{piata}

We consider the initial boundary problem for the $p$-Laplace equation equipped with $0$-Neumann boundary condition:
\begin{equation}
\left\{
\begin{aligned}
& u_t = \nabla\cdot(|\nabla u|^{p-2}\nabla u),
& t>0,\, &x\in\Omega,
\\
&\frac{\partial u}{\partial \nu}=0,
&t>0,\, &x\in\partial\Omega,
\\
&u(0,x)=u_{0}(x),
&\, &x\in\Omega,
\end{aligned}
\right.
\label{eqn;pLE}
\end{equation}
where $\Omega$ is a bounded domain of $\r^n$ $(n\geq 1)$ with smooth boundary $\partial\Omega$,
the initial data
$u_0\in W^{1,\infty}(\Omega)$ such that $u_0\geq 0$ in $\Omega$, and $1\le p <\infty$.
Our estimates in this section are formal. We assume the regular solution.
The main result of this section is a seemingly new Lyapunov functional for the 1D $p$-Laplace equation.
On the other hand, we also give an a priori estimate in dimensions $n\ge2$, which might be of interest.
We remark that our new functional $I$ with $p=2$ corresponds to the Fisher information for the linear heat equation.

\subsection{A priori estimate in higher dimensions}
\begin{prop}\label{prop;pLaplace_higher}
	Let $u$ be the positive classical solution to \eqref{eqn;pLE}.
	Set $I[u]$ as
	\begin{align*}
	I[u]:=\int_{\Omega}|\nabla u^{1-\frac1{2(p-1)}}|^{p}  dx.
	\end{align*}
	Then for $p\neq 3/2$
	\begin{align*}
	&	\frac{d}{dt}I[u]\\=\,&-pp_*^{2-p}
	\int_{\Omega}\left|u^{\frac14-\frac1{4(p-1)}}\nabla\cdot\left(|\nabla u^{1-\frac1{2(p-1)}}|^{p-2} \nabla u^{1-\frac1{2(p-1)}}\right)\right|^2 dx
	\\
	&+\frac{p^2}{2}p_*^{1-p}
	\int_{\Omega} u^{-\frac{1}{2}} |\nabla u^{1-\frac1{2(p-1)}}|^{2p-4} (\nabla u^{1-\frac1{2(p-1)}})^TD^2 u^{1-\frac1{2(p-1)}}\nabla u^{1-\frac1{2(p-1)}}dx
	\\
	&-\frac {p}{4}p_*^{-p}
	\int_{\Omega}\left||\nabla u^{1-\frac1{2(p-1)}}|^{p} u^{-\frac{3}{4}+\frac{1}{4(p-1)}}\right|^{2} dx,
	\end{align*}
where
	\begin{align*}
	p_*:=1-\frac1{2(p-1)}.
	\end{align*}
\end{prop}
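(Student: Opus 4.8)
The plan is to mimic, in the quasilinear $p$-Laplace setting, the two-step computation used for Theorem \ref{thm;1D-FI} and its higher-dimensional variant in Remark (the one after the proof of Theorem \ref{thm;1D-FI}). First I would introduce the convenient change of variables $w:=u^{p_*}$ with $p_*=1-\tfrac1{2(p-1)}$ so that $I[u]=\int_\Omega |\nabla w|^p\,dx$ and the functional whose time derivative we want is a clean $L^p$-norm of a gradient. The $p$-Laplace equation $u_t=\nabla\cdot(|\nabla u|^{p-2}\nabla u)$ must be rewritten entirely in terms of $w$; this requires expressing $|\nabla u|^{p-2}\nabla u$ and then the divergence through $w$ and its derivatives, producing the factor $p_*^{1-p}$ (and later its powers $p_*^{2-p}$, $p_*^{-p}$) from the chain rule $\nabla u = p_*^{-1} u^{1-p_*}\nabla w$. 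The exponent bookkeeping — tracking every power of $u$ and of $p_*$ — is where most of the routine labor lies, so I would set up notation carefully before differentiating.

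Next I would differentiate $I[u]=\int_\Omega |\nabla w|^p\,dx$ in time:
\begin{align*}
\frac{d}{dt} I[u] = p\int_\Omega |\nabla w|^{p-2}\nabla w\cdot \nabla w_t\,dx
= -p\int_\Omega w_t\,\nabla\cdot\big(|\nabla w|^{p-2}\nabla w\big)\,dx,
\end{align*}
using the homogeneous Neumann condition (which, as in the earlier sections, transfers to $w$ since $\partial u/\partial\nu=0$). I would then substitute the expression for $w_t$ coming from the rewritten PDE. Schematically $w_t = p_* u^{p_*-1} u_t$, and $u_t$ expressed through $w$ gives $w_t$ as $u^{\alpha}$ times $\nabla\cdot(|\nabla w|^{p-2}\nabla w)$ plus a lower-order term involving $|\nabla w|^{p-2}$ and first derivatives of $w$; collecting these produces the three integrals in the statement. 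The first integral is the manifestly dissipative square term with coefficient $-pp_*^{2-p}$; the third is the other perfect-square-type term with coefficient $-\tfrac{p}{4}p_*^{-p}$; and the middle term with the Hessian quadratic form $(\nabla w)^T D^2 w\,\nabla w$ arises exactly as in the $n\ge2$ remark after Theorem \ref{thm;1D-FI}, from an integration by parts on the cross term and a Bochner-type identity $\nabla w\cdot\nabla\Delta_p$-type manipulation. Here the restriction $p\neq 3/2$ enters because $p_*=0$ precisely when $p=3/2$, so $w$ degenerates and the change of variables is illegitimate.

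The main obstacle I anticipate is purely computational rather than conceptual: correctly carrying out the integration by parts that converts $\int w_t\,\nabla\cdot(|\nabla w|^{p-2}\nabla w)$ into the stated combination, because unlike the linear case one cannot simply invoke Bochner's formula (Lemma \ref{lem;Bochner}) verbatim — one needs the $p$-Laplacian analogue, i.e. an identity expanding $\nabla w\cdot\nabla(\nabla\cdot(|\nabla w|^{p-2}\nabla w))$ in terms of $|D^2 w|^2$-type quantities, the Hessian quadratic form $(\nabla w)^TD^2w\,\nabla w$, and boundary contributions. One must check the boundary terms vanish or have a sign (using convexity via Lemma \ref{lem;Evans} if needed, although the statement as written is an identity, so presumably the Neumann condition alone kills them for this functional). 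I would also need to be vigilant that all the fractional powers of $u$ recombine into exactly the exponents $\tfrac14-\tfrac1{4(p-1)}$, $-\tfrac12$, and $-\tfrac34+\tfrac1{4(p-1)}$ displayed; a good consistency check is the case $p=2$, where $p_*=\tfrac12$, $w=\sqrt u$, $I[u]=\int|\nabla\sqrt u|^2\,dx=\tfrac14\int|\nabla u|^2/u\,dx$, and the formula should collapse to the classical Fisher-information dissipation identity \eqref{wstepne_bis} modulo the factor $\tfrac14$.
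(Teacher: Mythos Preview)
Your proposal is correct and follows essentially the same route as the paper: set $w=u^{p_*}$, rewrite the flux as $|\nabla u|^{p-2}\nabla u=p_*^{1-p}u^{1/2}|\nabla w|^{p-2}\nabla w$, differentiate $I[u]$, and integrate by parts using $\nabla w\cdot\nu=0$. The only cosmetic difference is ordering: you integrate by parts \emph{before} substituting $w_t$ (slightly cleaner, since substituting $w_t=p_*^{2-p}u^{p_*-\frac12}\nabla\cdot(|\nabla w|^{p-2}\nabla w)+\tfrac12 p_*^{1-p}u^{-1/2}|\nabla w|^p$ then yields the dissipative square immediately), whereas the paper keeps $p\int_\Omega|\nabla w|^{p-2}\nabla w\cdot\nabla w_t\,dx$, substitutes inside the gradient, expands the inner divergence by the product rule, and only then integrates by parts on the term carrying $\nabla\cdot(|\nabla w|^{p-2}\nabla w)$.

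One correction worth making: your anticipated ``main obstacle'' --- a $p$-Laplacian Bochner analogue expanding $\nabla w\cdot\nabla\big(\nabla\cdot(|\nabla w|^{p-2}\nabla w)\big)$ --- never arises in either approach. In your own scheme, after the first integration by parts the only remaining term is $-\tfrac{p}{2}p_*^{1-p}\int_\Omega u^{-1/2}|\nabla w|^p\,\nabla\cdot(|\nabla w|^{p-2}\nabla w)\,dx$; one further integration by parts (boundary term again killed by $\nabla w\cdot\nu=0$) together with the elementary chain rule $\nabla|\nabla w|^p=p|\nabla w|^{p-2}D^2w\,\nabla w$ and $\nabla(u^{-1/2})=-\tfrac{1}{2p_*}u^{-\frac32+\frac{1}{2(p-1)}}\nabla w$ produces the Hessian term and the third integral directly. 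The paper handles its corresponding term in exactly the same way, so no second-order identity is needed and the computation is more routine than you anticipate.
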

\vspace{3mm}
\begin{pr}{Proposition \ref{prop;pLaplace_higher}}
	Differentiating $I$ with respect to time,
	\begin{align}
	\frac{d}{dt} \int_{\Omega} |\nabla u^{1-\frac1{2(p-1)}} |^{p}  dx
	=\,&
	p\int_{\Omega} |\nabla u^{1-\frac1{2(p-1)}} |^{p-1} \frac{\nabla u^{1-\frac1{2(p-1)}} }{|\nabla u^{1-\frac1{2(p-1)}} |} \cdot\nabla\partial_t u^{1-\frac1{2(p-1)}}   dx
	\nonumber\\
	=\,&
	p\int_{\Omega}|\nabla u^{1-\frac1{2(p-1)}}  |^{p-2}\nabla u^{1-\frac1{2(p-1)}}\cdot \nabla\partial_t u^{1-\frac1{2(p-1)}}  dx.
	\label{eqn;p-1}
	\end{align}
	Since
	\begin{align*}
	\partial_t u^{1-\frac1{2(p-1)}}
	=\,&\left(1-\frac{1}{2(p-1)}\right) u^{-\frac1{2(p-1)}}\partial_t u
	\\
	=\,&
	\left(1-\frac1{2(p-1)}\right) u^{-\frac1{2(p-1)}}\nabla\cdot (|\nabla u|^{p-2} \nabla u),
	\end{align*}
	the right hand side of \eqref{eqn;p-1} is represented as
	\begin{align*}
	&p\int_{\Omega}|\nabla u^{1-\frac1{2(p-1)}}  |^{p-2}\nabla u^{1-\frac1{2(p-1)}}\cdot \nabla\partial_t u^{1-\frac1{2(p-1)}}  dx
	\\
	=\,&pp_*
	\int_{\Omega}|\nabla u^{1-\frac1{2(p-1)}}|^{p-2}\nabla u^{1-\frac1{2(p-1)}}\cdot\nabla\left[u^{-\frac1{2(p-1)}}\nabla\cdot (|\nabla u|^{p-2} \nabla u)\right] dx,
			\eqntag
	\label{eqn;p-0}
	\end{align*}
		where
	\begin{align*}
	p_*:=1-\frac1{2(p-1)}.
	\end{align*}	
	Moreover, noting
	\begin{align*}
	\nabla u = \left(1-\frac{1}{2(p-1)}\right)^{-1} u^{\frac{1}{2(p-1)}} \nabla u^{1-\frac1{2(p-1)}},
	\end{align*}
	we see
	\begin{align*}
	|\nabla u|^{p-2} \nabla u=\left(1-\frac{1}{2(p-1)}\right)^{1-p} u^{\frac{1}{2}} |\nabla u^{1-\frac1{2(p-1)}} |^{p-2} \nabla u^{1-\frac{1}{2(p-1)}},
\eqntag \label{eqn;p-2}
	\end{align*}
	so that by using \eqref{eqn;p-2} in \eqref{eqn;p-0} and integrating by parts,
	\begin{align}
	&pp_*\int_{\Omega}|\nabla u^{1-\frac{1}{2(p-1)}}|^{p-2}\nabla u^{1-\frac1{2(p-1)}}\cdot\nabla\left[u^{-\frac1{2(p-1)}}\nabla\cdot (|\nabla u|^{p-2} \nabla u)\right] dx
	\nonumber\\
	=\,&pp_*^{2-p}
	\int_{\Omega}|\nabla u^{p_*}  |^{p-2}\nabla u^{p_*}
	\cdot\nabla\left[u^{-\frac1{2(p-1)}}\nabla\cdot \left(u^{\frac{1}{2}} |\nabla u^{p_*}|^{p-2} \nabla u^{p_*}\right)\right] dx
	\nonumber\\
	=\,&pp_*^{2-p}
	\int_{\Omega}|\nabla u^{p_*}|^{p-2}\nabla u^{p_*}\cdot
	\nabla\left[u^{-\frac1{2(p-1)}} |\nabla u^{p_*} |^{p-2} \nabla u^{\frac{1}{2}}\cdot\nabla u^{p_*} \right]dx
	\nonumber\\
	&+pp_*^{2-p}
	\int_{\Omega}|\nabla u^{p_*} |^{p-2}\nabla u^{p_*}\cdot
	\nabla\left[u^{\frac{1}{2}-\frac{1}{2(p-1)}}\nabla\cdot \left( |\nabla u^{p_*} |^{p-2} \nabla u^{p_*}\right)\right] dx
	\nonumber\\
	=\,&pp_*^{2-p}
	\int_{\Omega}|\nabla u^{p_*}|^{p-2}\nabla u^{p_*}\cdot
	\nabla\left[u^{-\frac{1}{2(p-1)}} |\nabla u^{p_*} |^{p-2} \nabla u^{\frac{1}{2}}\cdot\nabla u^{p_*}\right] dx
	\nonumber\\
	&-pp_*^{2-p}
	\int_{\Omega}
	u^{\frac{1}{2}-\frac{1}{2(p-1)}} \left|\nabla\cdot \left( |\nabla u^{p_*}  |^{p-2} \nabla u^{p_*} \right)\right|^2 dx,
	\label{eqn;p-3}
	\end{align}
	where the $0$-Neumann boundary condition for $u$ implies 
	\begin{align*}
	\nabla u^{p_*}\cdot \nu =\, p_* u^{p_*-1} \nabla u \cdot \nu =0\quad\text{on}~\partial\Omega.
	\end{align*}
Here, as $p\neq 3/2$, then
	\begin{align*}
	u^{-\frac1{2(p-1)}}\nabla u^{\frac{1}{2}}=\frac{1}{2p_*} u^{-\frac{1}{2}} \nabla u^{p_*},
	\end{align*}
	and then the first term on the right hand side of \eqref{eqn;p-3} is represented as
	\begin{align*}
	&p p_*^{2-p}
	\int_{\Omega}|\nabla u^{p_*}|^{p-2}\nabla u^{p_*}
	\cdot\nabla\left[u^{-\frac{1}{2(p-1)}} |\nabla u^{p_*} |^{p-2} \nabla u^{\frac{1}{2}}\cdot\nabla u^{p_*}\right]dx
	\\
	=\,&\frac {p}{2}p_*^{1-p}
	\int_{\Omega}|\nabla u^{p_*}|^{p-2}\nabla u^{p_*}\cdot
	\nabla\left[u^{-\frac{1}{2}} |\nabla u^{p_*} |^{p} \right] dx.
	\end{align*}
	Since
	\begin{align*}
	&	\nabla\left[u^{-\frac{1}{2}}|\nabla u^{p_*}|^{p}\right]
	\\
	=\,&\nabla u^{-\frac{1}{2}}  |\nabla u^{p_*}|^{p}
	+u^{-\frac1{2}} \nabla|\nabla u^{p_*}|^{p}
	\\
	=\,&-\frac{1}{2} u^{-\frac{3}{2}} \nabla u |\nabla u^{p_*} |^{p}
	+u^{-\frac{1}{2}} p|\nabla u^{p_*} |^{p-2}\nabla u^{p_*}D^2 u^{p_*}
	\\
	=\,&
	-\frac{1}{2}p_*^{-1} u^{-\frac{3}{2}+\frac{1}{2(p-1)}} \nabla u^{p_*} |\nabla u^{p_*} |^{p}
	+u^{-\frac{1}{2}} p|\nabla u^{p_*} |^{p-2}\nabla u^{p_*}D^2 u^{p_*},
	\end{align*}
	one notices
	\begin{align*}
	&\frac {p}{2}p_*^{1-p}
\int_{\Omega}|\nabla u^{p_*}|^{p-2}
\nabla u^{p_*}\cdot\nabla\left[u^{-\frac{1}{2}} |\nabla u^{p_*} |^{p} \right]dx
\\=\,&-\frac{p}{4}p_*^{-p}
\int_{\Omega} u^{-\frac{3}{2}+\frac{1}{2(p-1)}} |\nabla u^{p_*} |^{2p} dx
\\
&+\frac{p^2}{2}p_*^{1-p}
\int_{\Omega}
u^{-\frac{1}{2}} |\nabla u^{p_*}  |^{2p-4} (\nabla u^{p_*})^T  D^2 u^{p_*} \nabla u^{p_*}  dx.
	\end{align*}
	Combining above with \eqref{eqn;p-3} and plugging the outcome into \eqref{eqn;p-0},
	we realize
	\begin{align*}
	\frac{d}{dt}I[u]
	=\,&
-	pp_*^{2-p}
	\int_{\Omega}
	u^{\frac{1}{2}-\frac{1}{2(p-1)}} \left|\nabla\cdot \left( |\nabla u^{p_*}  |^{p-2} \nabla u^{p_*} \right)\right|^2 dx
	\\
	&+\frac{p^2}{2}p_*^{1-p}\int_{\Omega}
	u^{-\frac{1}{2}} |\nabla u^{p_*}  |^{2p-4} (\nabla u^{p_*})^T  D^2 u^{p_*} \nabla u^{p_*} dx
	\\
	&-\frac{p}{4}p_*^{-p}
	\int_{\Omega} u^{-\frac{3}{2}+\frac{1}{2(p-1)}} |\nabla u^{p_*} |^{2p} dx,
	\end{align*}
	where $p_*:=1-\frac{1}{2(p-1)}$,
	as claimed.
\end{pr}

\vspace{3mm}
\subsection{Lyapunov functional in 1D $p$-Laplace equations}

Let us consider the solution to \eqref{eqn;pLE}.
We shall show that $I[u]$ defined in Proposition \ref{prop;pLaplace_higher}
decreases along the trajectories of \eqref{eqn;pLE} for $p\ge2$.
Indeed, we have the following theorem.

\begin{theorem}\label{thm;p-Laplace-1d}
Assume $p\ge2$. Let $u$ be a solution to \eqref{eqn;pLE} in $\Omega=(0,1)$.
Then,
\begin{align*}
I[u]:=\int_0^1 | \partial_x (u^{1-\frac{1}{2(p-1) } })|^pdx
\end{align*}
 is non-increasing in time.
\end{theorem}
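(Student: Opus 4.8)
The plan is to specialize Proposition \ref{prop;pLaplace_higher} to $n=1$ and then control the single sign-indefinite term on the right-hand side. Write $p_*:=1-\frac{1}{2(p-1)}$ and set $w:=u^{p_*}$. In dimension one the Hessian reduces to $\partial_{xx}$, the divergence to $\partial_x$, and $(\nabla w)^T D^2w\,\nabla w=(\partial_x w)^2\partial_{xx}w$, while $|\partial_x w|^{2p-4}(\partial_x w)^2=|\partial_x w|^{2p}/|\partial_x w|^2=|\partial_x w|^{2p-2}$ (admissible since $p\ge2$). Hence Proposition \ref{prop;pLaplace_higher} becomes
\begin{align*}
\frac{d}{dt}I[u]
=\,& -p\,p_*^{2-p}\int_0^1\left(u^{\frac14-\frac1{4(p-1)}}\,\partial_x\!\left(|\partial_x w|^{p-2}\partial_x w\right)\right)^2 dx\\
&+\frac{p^2}{2}p_*^{1-p}\int_0^1 u^{-\frac12}|\partial_x w|^{2p-2}\partial_{xx}w\,dx\\
&-\frac{p}{4}p_*^{-p}\int_0^1 u^{-\frac32+\frac1{2(p-1)}}|\partial_x w|^{2p}\,dx .
\end{align*}
Since $p\ge2$ gives $p_*\ge\frac12>0$ (in particular $p\neq3/2$), every power of $p_*$ appearing is positive, so the first and third integrals contribute non-positively; only the middle integral requires work.

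For the middle integral I would use $(2p-1)|\partial_x w|^{2p-2}\partial_{xx}w=\partial_x\!\left(|\partial_x w|^{2p-2}\partial_x w\right)$, integrate by parts, and observe that the boundary term vanishes because the zero-Neumann condition yields $\partial_x w=p_*u^{p_*-1}\partial_x u=0$ at $x=0,1$. Then, using $\partial_x(u^{-1/2})=-\frac{1}{2p_*}u^{-1/2-p_*}\partial_x w$ and $|\partial_x w|^{2p-2}(\partial_x w)^2=|\partial_x w|^{2p}$, the middle term becomes $\frac{p^2}{4(2p-1)}p_*^{-p}\int_0^1 u^{-\frac12-p_*}|\partial_x w|^{2p}\,dx$. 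Noting the identity $-\frac32+\frac1{2(p-1)}=-\frac12-p_*$, this is exactly a multiple of the third integral, and adding the two produces the single term
\[
-\frac{p(p-1)}{4(2p-1)}\,p_*^{-p}\int_0^1 u^{-\frac12-p_*}|\partial_x w|^{2p}\,dx ,
\]
whose prefactor is negative for $p\ge2$ (indeed for all $p>1$). Together with the manifestly non-positive first term, this gives $\frac{d}{dt}I[u]\le0$, which is the claim.

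The main point of care is the bookkeeping in this integration by parts: one must check that the exponent of $u$ generated there matches the one in the third integral and that the two constants combine with the correct negative sign; everything else—vanishing of the boundary contributions, positivity of $u$, admissibility of the occurring powers—follows from $p\ge2$, positivity of the classical solution, and the Neumann condition. An alternative route, giving the same bookkeeping, is to bypass Proposition \ref{prop;pLaplace_higher} and repeat the computation behind Theorem \ref{thm;1D-FI} directly on \eqref{eqn;pLE}, completing the square as there.
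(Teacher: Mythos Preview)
Your proof is correct, and it takes a genuinely different route from the paper's. Both arguments start from the one-dimensional specialization of Proposition \ref{prop;pLaplace_higher}, but they diverge in how they dispose of the sign-indefinite middle term. The paper keeps all three terms together and completes the square: it rewrites $\partial_x(|\partial_x w|^{p-2}\partial_x w)=(p-1)|\partial_x w|^{p-2}\partial_{xx}w$ and observes that the middle term is, pointwise, the cross term between the first integrand and a fraction $\big(\tfrac{p}{2(p-1)}\big)^2$ of the third, so that the three combine into a negative perfect square minus the leftover $\big(1-\big(\tfrac{p}{2(p-1)}\big)^2\big)$ times the third integral; the hypothesis $p\ge2$ is precisely what makes this leftover non-positive. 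You instead discard the first term on its own (it is already $\le0$) and integrate the middle term by parts, turning it into $\tfrac{p}{2p-1}$ times the negative of the third; adding the middle and third then gives the single term $-\tfrac{p(p-1)}{4(2p-1)}p_*^{-p}\int_0^1 u^{-1/2-p_*}|\partial_x w|^{2p}\,dx$. Your route is a bit more economical, and, as your parenthetical remark hints, the resulting coefficient is negative for every $p>1$: combined with $p_*>0$ (that is, $p>3/2$), your argument would formally extend the monotonicity to the wider range $p>3/2$, whereas the paper's completion-of-the-square step genuinely requires $p\ge2$.
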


\begin{pr}{Theorem \ref{thm;p-Laplace-1d}}
Let us recall $p_*:=1-\frac{1}{2(p-1)}$.
By Proposition~\ref{prop;pLaplace_higher}, we know that
	\begin{align*}
\frac{d}{dt}I[u]=\,&-pp_*^{2-p}
\int_{0}^1\left|u^{\frac14-\frac1{4(p-1)}}\partial_x\left(|\partial_x u^{p_*}|^{p-2} \partial_x u^{p_*}\right)\right|^2 dx
\\
&+\frac{p^2}{2}p_*^{1-p}\int_{0}^1 u^{-\frac{1}{2}} |\partial_x u^{p_*}|^{2p-4} \partial_xu^{p_*} \partial_{xx} u^{p_*}\partial_x u^{p_*} dx
\\
&-\frac {p}{4}p_*^{-p}
\int_{0}^1\left||\partial_x u^{p_*}|^{p} u^{-\frac{3}{4}+\frac{1}{4(p-1)}}\right|^{2} dx
	\\
	=\,&
	-pp_*^{-p}
	\int_{0}^1
	\left|p_*u^{\frac{1}{4}-\frac{1}{4(p-1)}} \partial_x\left( |\partial_x u^{p_*}  |^{p-2} \dx u^{p_*} \right)\right|^2 dx
	\\
	&+pp_*^{-p}\int_{0}^1\frac {pp_*}{2}
	u^{-\frac{1}{2}} |\partial_x u^{p_*}  |^{2p-2} \partial_{xx} u^{p_*}  dx
\\
	&-pp_*^{-p}
	\int_{0}^1 \left| \frac{1}{2}u^{-\frac{3}{4}+\frac{1}{4(p-1)}} |\partial_x u^{p_*} |^{p} \right|^2 dx.
\end{align*}
	Moreover,
	\begin{align*}
		&\partial_x\left(|\partial_x u^{p_*} |^{p-2} \partial_x u^{p_*}\right)
		\\
		=\,&(p-2)|\partial_x u^{p_*}|^{p-4}\partial_x u^{p_*} \partial_{xx} u^{p_*} \partial_x u^{p_*}
	+|\partial_x u^{p_*}|^{p-2} \partial_{xx} u^{p_*}
	\\
	=\,&(p-1)|\partial_x u^{p_*}|^{p-2}
	 \partial_{xx} u^{p_*} ,
		\end{align*}
consequently,
		\begin{align*}
		&p^{-1}p_*^p\frac{d}{dt}I[u]
		\\
		=\,&
		-\int_{0}^1\left|p_*u^{\frac{1}{4}-\frac{1}{4(p-1)}}\partial_x\left(|\partial_x u^{p_*}|^{p-2} \partial_x u^{p_*}\right)\right|^2 dx
	\\
		&+
	\int_{0}^1 \frac {pp_*}{2}u^{-\frac{1}{2}} |\partial_x u^{p_*}|^{2p-2}
		\partial_{xx} u^{p_*}
		 dx
		-\int_{0}^1\left|\frac{1}{2}|\partial_x u^{p_*}|^{p}  u^{-\frac{3}{4}+\frac{1}{4(p-1)}}\right|^{2} dx
		\\
		=\,&
		-\int_{0}^1\left|p_*u^{\frac{1}{4}-\frac{1}{4(p-1)}}(p-1)|\partial_x u^{p_*}|^{p-2}
\partial_{xx}u^{p_*}
\right|^2 dx
	\\
		&+\frac{p}{2}
		\int_{0}^1 p_*u^{-\frac{1}{2}} |\partial_x u^{p_*}|^{2p-2}
\partial_{xx} u^{p_*}
 dx
		\\
		&-\left(\frac {p}{2(p-1)}\right)^2
		\int_{0}^1\left|\frac{1}{2}|\partial_x u^{p_*}|^{p} u^{-\frac{3}{4}+\frac{1}{4(p-1)}}\right|^{2}dx
		\\
		&-\left(1-\left(\frac {p}{2(p-1)}\right)^2\right)
		\int_{\Omega}\left|\frac{1}{2}|\partial_xu^{p_*}|^{p} u^{-\frac{3}{4}+\frac{1}{4(p-1)}}\right|^{2} dx
		\\
		=\,&
		-\int_{0}^1|\partial_x u^{p_*}|^{2p}\Bigg|p_*(p-1)u^{\frac{1}{4}-\frac{1}{4(p-1)}}|\partial_x u^{p_*}|^{-2}
\partial_{xx} u^{p_*}
	-\frac{p}{4(p-1)}u^{-\frac{3}{4}+\frac{1}{4(p-1)}}\Bigg|^2  dx
		\\
		&-\left(1-\left(\frac {p}{2(p-1)}\right)^2\right)
		\int_{0}^1\left|\frac12|\partial_x u^{p_*}|^{p} u^{-\frac{3}{4}+\frac{1}{4(p-1)}}\right|^{2}\ dx
		\\
		\le\,&0,
		\end{align*}
		provided
		\begin{align*}
		p>1,\quad p\neq \frac{3}{2},\quad
	p_*> 0,
		\quad
		1-\left(\frac {p}{2(p-1)}\right)^2 \geq 0,
		\end{align*}
		 hence
		\begin{align*}
		p\geq  2.
		\end{align*}
		Therefore, if $2\leq  p$, then
		\begin{align*}
		\frac{d}{dt}I[u] \leq 0,
		\end{align*}
		which concludes the proof.
\end{pr}

\vspace{5mm}
\noindent
{\bf Acknowledgments.}
The work of the second author is supported by JSPS KAKENHI Grant Numbers 24H00184,  25K21996.
The work of the third author is partially supported by JSPS Early-Career Scientists, Grant Number 25K17274 and JSPS Fellows, Grant Number 25KJ0279.

\end{document}